\newcolumntype{P}[1]{>{\centering\arraybackslash}p{#1}}
\def\vertexsize {1.2pt}   
\newcommand{\vertex}[2][1]{\fill (#2) circle [radius = #1 * \vertexsize];}
\newcommand{\divisor}[2][1]{\fill [red] (#2) circle [radius = 2* \vertexsize];}
\newcommand{\divisorBig}[2][1]{\fill [red] (#2) circle [radius = 2.5* \vertexsize];}
\newcommand{\subgr}[2][1]{\fill [blue] (#2) circle [radius = 2* \vertexsize];} 
\definecolor{verde}{rgb}{0.01, 0.75, 0.24}
\theoremstyle{plain}                       
\newtheorem{theo}{Theorem}[section]
\newtheorem{prop}[theo]{Proposition}    
\newtheorem{cor}[theo]{Corollary}       
\newtheorem{lem}[theo]{Lemma} 
\theoremstyle{definition}               
\newtheorem{defin}{Definition}
\newtheorem{ex}{Example}   
\newtheorem{rk}{Remark}   
\theoremstyle{remark} 
\title{Tropical Trigonal Curves: the general case} 
\date{}
\author{Margarida Melo, Angelina Zheng}
\address{Department of Mathematics and Physics \\ University of Roma Tre \\ 00146 Rome, Italy}
\email{margarida.melo@uniroma3.it}
\address{Department of Mathematics\\ University of T\"ubingen \\ 72076 T\"ubingen, Germany}
\email{zheng@math.uni-tuebingen.de}
\begin{document}
\begin{abstract}
    
    This paper is a follow-up of a previous work in which we show that, for a $3$-edge connected tropical curve $\Gamma$,   the existence of a divisor of degree $3$ and Baker-Norine rank at least $1$ in $\Gamma$ is equivalent to the existence of a non-degenerate harmonic morphism of degree $3$ from a tropical modification of $\Gamma$ to a tropical rational curve.
    In this work, we extend this result to a tropical curve with lower edge connectivity which does not contain a cycle of (at least three) separating vertices  (a so-called  necklace).
\end{abstract}

\thanks{MM is supported by MIUR via the projects  PRIN2017SSNZAW (Advances in Moduli Theory and Birational Classification),  PRIN 2022L34E7W (Moduli spaces and birational geometry) and PRIN 2020KKWT53 (Curves, Ricci flat Varieties
and their Interactions), and is a member of the Centre for Mathematics of the University of
Coimbra -- UIDB/00324/2020, funded by the Portuguese Government through FCT/MCTES. 
AZ is supported by Alexander von Humboldt Foundation.
MM and AZ are members of the INDAM group GNSAGA}
\maketitle
\tableofcontents
\section{Introduction}

The study of algebraic curves of given gonality is a classical subject in algebraic geometry, motivated by the rich geometry of such curves. Indeed, the fact that a smooth algebraic curve admits a $g^1_d$, i.e. a divisor of degree $d$ and rank at least $1$, is equivalent to the existence of a non-degenerate morphism of degree $d$ to $\mathbb P^1$. The geometry of these covers, which is deeply connected to their beautiful topology, can then be used to understand the moduli space of curves themselves, using their stratification by gonality. 

As one can expect, the topology of covers, under degenerations, translates into quite interesting combinatorial properties, the description of which is a challenge. The moduli space of admissible covers, first introduced by Harris and Mumford in \cite{HM82}, gives a remarkable answer to such a problem.
However, as shown in \cite{HM82}, the geometry of nodal curves admitting a $g^1_d$ is connected to the existence of a morphism from a modification of the source curve to a rational curve with possibly several components. This fact makes this moduli space much more interesting from the combinatorial point of view.

Due to the combinatorial relevance of this moduli problem, one can expect that its tropical version might be particularly helpful and interesting itself. 
Indeed, the case of tropical hyperelliptic curves works just as for smooth curves: it was proved by Melody Chan in \cite{MC} that a tropical curve admits a divisor of degree $2$ and rank $1$ (a $g_2^1$),  if and only if it admits a non-degenerate harmonic
morphism of degree $2$ to a tropical curve of genus $0$ (a metric tree).
Things are much trickier though in the case of curves of higher gonality. 
It is easy to show (see \cite{MZ25} for a proof) that if a metric graph $\Gamma$ is  $d$-gonal, i.e., if it is endowed with a non-degenerate harmonic morphism to a metric tree, then $\Gamma$ is divisorially $d$-gonal, i.e., it admits a divisor of degree $d$ and rank at least $1$ (obtained by pulling back a point from the tree via the morphism). 
On the other hand, the opposite implication does not hold in general. Indeed, in \cite[Example 5.13]{ABBR} the authors presented a metric graph which is divisorially trigonal, but non-trigonal (see also Example \ref{ex:Luo} for more details).

\bigskip

This paper is a follow-up of a previous work in which we proved that a $3$-edge connected metric graph $\Gamma$ (with at least four vertices in its canonical loopless model) is divisorially trigonal if and only if it is trigonal.

More precisely, the main result we proved in \cite{MZ25} in the $3$-edge connected case is the following.

\begin{theo}{\cite[Theorem 1.1]{MZ25}}\label{th:main3conn}
    Let $\Gamma$ be a $3$-edge connected metric graph with canonical loopless model $(G_{-},l_{-})$. The following are equivalent.
    \begin{itemize}
        \item[A.] $|V(G_{-})|=2,3$ or $\Gamma$ is trigonal.
        \item[B.]$\Gamma$ is divisorially trigonal.
        \end{itemize}
\end{theo}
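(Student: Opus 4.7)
The equivalence decomposes into two implications. For the direction $A \Rightarrow B$, the case where $\Gamma$ is trigonal is classical: pulling back any point of the target via the degree-$3$ harmonic morphism yields an effective divisor of degree $3$ with rank at least $1$, as recalled in the introduction. In the small cases $|V(G_-)| \in \{2,3\}$, the $3$-edge connectivity severely constrains the underlying multigraph (essentially to a ``banana'' or a doubled triangle with parallel edges), and one can exhibit explicit effective divisors of degree $3$ whose rank is at least $1$ — for instance, three chips at a single vertex, or one chip at each vertex — and check the rank condition directly via Dhar's burning algorithm.

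For the harder direction $B \Rightarrow A$, given a divisor $D$ on $\Gamma$ of degree $3$ with $r(D) \geq 1$, the plan is to produce a non-degenerate harmonic morphism of degree $3$ from a tropical modification of $\Gamma$ onto a metric tree. The rank hypothesis yields, for every $p \in \Gamma$, an effective divisor $D_p \sim D$ with $D_p(p) \geq 1$, hence a \emph{residual} effective divisor $R_p := D_p - p$ of degree $2$. First I would show, using reduced divisor theory and Dhar's algorithm, that the assignment $p \mapsto R_p$ is piecewise linear in $p$ and changes combinatorial type only across a discrete locus of $\Gamma$. Declaring $p \sim p'$ whenever $p + R_p \sim p' + R_{p'}$ then partitions $\Gamma$ into the candidate fibres of the morphism, and the induced quotient map becomes the candidate harmonic morphism once one inserts tropical modifications to absorb the points where $R_p$ jumps.

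The main obstacle is verifying that the resulting quotient is genuinely a metric tree and that the induced map is both harmonic and non-degenerate: one must check the balancing condition at every vertex of the modified source, rule out configurations in which positive-length edges collapse, and ensure the quotient is acyclic. This is precisely where Luo's counterexample \cite[Example 5.13]{ABBR} must be excluded. The hypotheses are used here in a crucial way: $3$-edge connectivity provides three internally disjoint paths between any pair of vertices, giving the three ``sheets'' of the would-be cover and forcing any would-be cycle in the quotient to lift inconsistently, while $|V(G_-)| \geq 4$ rules out the Luo-type configurations with too few vertices to accommodate the required modifications harmonically. I expect the argument to proceed by a case analysis on how the support of $D_p$ distributes across the vertices of $G_-$ as $p$ varies, with the bulk of the technical work being the edge-by-edge verification of harmonicity on the quotient.
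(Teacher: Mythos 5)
First, note that the paper does not reprove this statement: it is quoted from \cite[Theorem 1.1]{MZ25}, and the only proof machinery visible here is the parallel argument for the general case in Sections \ref{sc:hyperelliptic}--\ref{sc:non_hyp}. Your high-level strategy (fibres of the would-be morphism $=$ supports of effective representatives of $D$, target tree $=$ the set of such representatives, harmonicity checked edge by edge) is indeed the strategy of \cite{MZ25} and of Section \ref{sc:non_hyp} of this paper. However, there is a genuine gap in the way you set up the fibres. You declare $p\sim p'$ whenever $p+R_p\sim p'+R_{p'}$; but both sides are by construction linearly equivalent to $D$, so this relation holds for \emph{every} pair of points and the ``partition'' is the trivial one. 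What is actually needed is equality of the effective divisors, $D_p=D_{p'}$ (equivalently, $p'\in\operatorname{Supp}(D_p)$), and the crux of the whole proof --- which your outline never addresses --- is showing that this is a well-defined partition, i.e.\ that distinct (admissible) effective representatives of $D$ have pairwise disjoint supports. This is exactly where $3$-edge connectivity enters: one shows via Dhar's algorithm and \cite[Lemma 3.1]{MC}-type arguments that a linear equivalence $x_1+x_2\sim x_1'+x_2'$ between residuals would force a $2$-edge cut, contradicting $3$-edge connectivity (compare Lemma \ref{lm:max_unique} and Proposition \ref{prop:k_cut} here, where the $2$-edge-cut case must be handled separately precisely because that hypothesis is dropped). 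Without this disjointness statement the quotient map is not even defined, so the ``piecewise linearity of $p\mapsto R_p$'' you invoke cannot substitute for it.

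Two smaller points. Your appeal to ``three internally disjoint paths'' is Menger for \emph{vertex} connectivity; $3$-edge connectivity only gives three edge-disjoint paths, and that is the version actually used (the three edges between consecutive representatives form a $3$-edge cut of equal lengths, which is what forces all indices to be $1$ and makes harmonicity automatic --- a step your outline replaces with an unspecified ``edge-by-edge verification''). And in the direction $A\Rightarrow B$ your treatment of $|V(G_-)|=2,3$ is fine in spirit, but note that in those cases the theorem asserts divisorial trigonality \emph{without} assuming trigonality, so the explicit divisor must be produced from $3$-edge connectivity alone, as you suggest; this part of your sketch is consistent with \cite[Theorems 3.2 and 3.9]{MZ25}.
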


We now consider the most general case, namely that of a metric graph $\Gamma$ with no assumption on its edge connectivity, i.e. a metric graph with bridges or pairs of disconnecting edges.
The analog statement for hyperelliptic graphs with no assumptions on their edge connectivity is known to hold on metric graphs, see \cite{MC}. 
Luo's example discussed in \cite[Example 5.13]{ABBR} shows that the naive hope that the above statement generalizes to any metric graph cannot hold with no restriction on the connectivity of the curve. However, in the present paper, we explain that by excluding from the statement a certain class of graphs (which includes Luo's example), Theorem \ref{th:main3conn}  still generalizes to graphs with no restriction on the edge connectivity. 

We will see that the graphs for which the above statement might not hold are characterized by a cycle on at least three vertices, which are all separating vertices. We will call these graphs \emph{necklaces}, see Definition \ref{def:neck}. Our main Theorem is then the following.

\begin{theo}
    Let $\Gamma$ be a metric graph which is not a necklace with canonical loopless model $(G_{-},l_{-})$. The following are equivalent.
    \begin{itemize}
        \item[A.] $|V(G_{-})|=2,3$ or $\Gamma$ is trigonal.
        \item[B.]$\Gamma$ is divisorially trigonal.
        \end{itemize}
\end{theo}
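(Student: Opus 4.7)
The plan is to reduce to the $3$-edge connected case covered by Theorem \ref{th:main3conn}. The direction (A)$\Rightarrow$(B) is the standard one recalled in the introduction: a non-degenerate harmonic morphism of degree $3$ from a tropical modification of $\Gamma$ to a tropical rational curve produces a $g^1_3$ by pull-back. So the content lies entirely in (B)$\Rightarrow$(A), and I may assume $|V(G_-)|\geq 4$.

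Assume $\Gamma$ is divisorially trigonal, carrying a divisor $D$ of degree $3$ with $r(D)\geq 1$. First I would decompose the canonical loopless model $(G_-,l_-)$ along its bridges and $2$-edge cuts, producing maximal $3$-edge connected subgraphs (the ``blocks'') together with a combinatorial description of how they are joined through bridges, separating vertices, and pairs of disconnecting edges. The non-necklace hypothesis precisely says that this block arrangement contains no cycle of separating vertices, so it has a tree-like structure that can be rooted and traversed inductively.

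Second, I would propagate the rank condition to each block. Using $v$-reduced representatives of $D$ together with Dhar's burning algorithm applied across bridges and $2$-edge cuts, one shows that $D$ restricts to a divisor $D_B$ on each block $B$ (or on a tropical modification of $B$) that still has rank at least $1$, up to a boundary contribution concentrated at the attaching vertices. Theorem \ref{th:main3conn} applied block-by-block then yields either the small-model case or a non-degenerate degree-$3$ harmonic morphism $\varphi_B$ from a modification of $B$ to a metric tree.

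The final and most delicate step is to glue the $\varphi_B$ across bridges, pairs of disconnecting edges, and separating vertices into a single non-degenerate harmonic morphism of degree $3$ from a modification of $\Gamma$ to a tropical rational curve. Bridges impose essentially no freedom, while $2$-edge cuts and cut vertices leave finitely many folding choices; these can be propagated one block at a time along the block tree. The main obstacle---and the reason for the necklace exception---is the potential monodromy obstruction: around a cycle of separating vertices the locally forced folding directions may fail to match up, exactly as in Luo's example. Excluding necklaces removes this obstruction, so the inductive gluing succeeds, producing the required global harmonic morphism and hence the trigonality of $\Gamma$.
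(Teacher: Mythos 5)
Your direction $A.\Rightarrow B.$ is fine and matches the paper. The direction $B.\Rightarrow A.$, however, has a genuine gap: the proposed reduction to Theorem \ref{th:main3conn} via a decomposition into maximal $3$-edge connected blocks does not work. First, the pieces obtained by cutting along bridges and $2$-edge cuts are typically \emph{not} graphs to which Theorem \ref{th:main3conn} supplies a morphism: a hyperelliptic subgraph (e.g.\ the chain of cycles in Figure \ref{fig:hyp_block}, or any cycle) has no $3$-edge connected block with at least four vertices in its canonical loopless model, so the theorem only returns the vacuous alternative $|V(G_-)|=2,3$ and produces nothing to glue. Second, the claim that $D$ ``restricts to a divisor $D_B$ of rank at least $1$ on each block, up to a boundary contribution'' is not justified and is false as stated: the local data the construction actually needs on a hyperelliptic piece is a degree~$2$ divisor $H$ with $D\sim H+p$ for a point $p$ \emph{outside} the piece, and the resulting local morphism has degree $2$ on the subgraph, extended to degree $3$ only after a tropical modification by a tree attached at $p$; it moreover carries edges of index $2$ (Figure \ref{fig:2_edge}), which never occurs in the $3$-edge connected case, so no block-by-block application of Theorem \ref{th:main3conn} can output these local morphisms.

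The paper's actual route is different in substance, not just in bookkeeping. It isolates the \emph{$D$-hyperelliptic halves} (Definition \ref{def:D_hyp}), proves from the rank condition that when $\Gamma$ is not a necklace they intersect each other and their complement in a controlled way (Lemmas \ref{lm:gluing} and \ref{lm:inters}, i.e.\ properties (H1),(H2)), and builds the morphism there from the hyperelliptic involution plus an added tree. On the complement it does not invoke the $3$-edge connected theorem at all: it introduces \emph{maximal admissible representatives} of $D$, proves their supports are disjoint and sit on $k$-edge cuts with $k=2,3$ whose edge lengths are forced to satisfy $2l(e_1)=l(e_2)$ in the $k=2$ case (Lemma \ref{lm:length2}, Proposition \ref{prp:lengths}), and uses these cuts to define the target tree and the indices directly (Proposition \ref{prp:harmonic}). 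These length relations and disjointness statements are exactly the ``finitely many folding choices can be propagated'' step you assert without proof, and they are where the non-necklace hypothesis is actually used; your sketch correctly locates the necklace obstruction but does not supply the arguments that make the local constructions exist and glue.
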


We saw in the $3$-edge connected case that, given a divisor of degree $3$ and rank $1$, the tuples of points in the support of its effective linearly equivalent divisors are disjoint and identifying the points in each tuple determines uniquely the morphism with the desired properties. This, however, might no longer be true when we consider necklaces: any two edges on a cycle whose vertices are all separating vertices of the graph form a $2$-edge cut. In particular, any effective divisor of degree $3$ supported on such a cycle is linearly equivalent to one with support at any of its other points, and then clearly the strategy used in the $3$-edge connected case might not work again.
Such a problem, presented in detail in Section \ref{sc:2}, explains why necklaces, which include \cite[Example 5.13]{ABBR}, are such that the equivalence between divisorial trigonality and trigonality cannot hold.

On the other hand, if we exclude necklaces, then a construction similar to the one in the $3$-edge connected case can be applied to prove that the equivalence between divisorial trigonality and trigonality does generalize to any metric graph with no assumption on its edge connectivity. 

Moreover, in Subsection \ref{ssc:admissible_covers}, such two definitions will also be related to that of tropical admissible cover of degree $3$ of \cite{CMR}: if we consider a graph with no separating vertices and no multiple edges all three definitions are equivalent. 

The relations between trigonality, divisorial trigonality and being a tropical admissible cover of degree $3$ are useful to describe the geometry of the tropicalization of the locus of trigonal curves along with their closure inside the moduli space of stable curves or inside the moduli space of admissible covers. 
Indeed, as shown in \cite{ACP22}, in order to study the topology of the (link of the) moduli space of tropical curves, one can restrict to the locus of curves which have no bridges, cut vertices, loops, weights and multiple edges.

The proof of the main theorem will be carried out as follows.
In Section \ref{sc:hyperelliptic} we will focus on metric graphs which are (partially) hyperelliptic.
By \cite[Proposition A.4]{CKK}, the edge-connectivity provides a lower bound on the smallest positive degree of a divisor of rank (at least) 1 on the combinatorial graph defining the canonical model of $\Gamma$.
In particular, from \cite[Lemma 5.3]{BN}, $3$-edge connected metric graph cannot be hyperelliptic.
By dropping the assumption on $3$-edge connectivity, we then require to consider graphs which might be (partially) hyperelliptic.
In most of these situations, the hyperelliptic sub-curve admits a non-degenerate harmonic morphism of degree 3 to a tree only up to doing tropical modifications on the curve, which are prescribed by the (partial) hyperelliptic structure. 
Such a non-degenerate harmonic morphism of degree 3 to a tree extends to the whole curve in most cases, with some exceptions, which will be treated in Subsection \ref{ssc:hyp_necklace}.

Finally, in Section \ref{sc:non_hyp}, we will consider instead the non-hyperelliptic case and construct the non-degenerate harmonic morphism of degree 3 to a tree, when this is possible.
For divisorially trigonal graphs for which such a morphism does not exist, in Subsection \ref{ssc:trig_necklace} we will construct instead a morphism which is still non-degenerate, harmonic and of degree $3,$ but whose target space is not a metric tree.

\medskip

\noindent 

\medskip
\section{Divisorial trigonality and trigonality for metric graphs}\label{sc:2}

We start by recalling the definitions of harmonic morphism and gonality for metric graphs.
For other definitions and notation, we refer to  \cite[Section 2]{MZ25}.

\begin{defin}\label{def_metric}
Let $\varphi:\Gamma=(G,l)\to \Gamma'=(G',l')$ be a morphism of metric graphs. The index on an edge $e \in E(G)$ is defined as 
\begin{equation}\label{index}
\mu_{\varphi}(e)=\begin{cases}
\frac{l'(\varphi(e))}{l(e)}\in\mathbb{Z}, &\text{if }\varphi(e)\in E(G'),\\
0,&\text{if }\varphi(e)\in V(G').
\end{cases}\end{equation}
A morphism of metric graphs is \textbf{non-degenerate} if for any $x\in V(G)$ there exists $e\in E_x(G)$ such that $\mu_{\varphi}(e)>0.$ 
If for any $x\in V(G)$ the quantity
$
    \sum_{\substack{e\in E_{x}(G)\\ \varphi(e)=e'}}\mu_{\varphi}(e)
$
is constant for any $e'\in E(G')$, we say that the morphism $\varphi$ is \textbf{harmonic} and we set $m_{\varphi} (x)=\sum_{\substack{e\in E_{x}(G)\\ \varphi(e)=e'}}\mu_{\varphi}(e).$ In this case, we denote the \textbf{degree} of the morphism as $\operatorname{deg}(\varphi)=\sum_{\substack{e\in E(G)\\ \varphi(e)=e'}}\mu_{\varphi}(e)$ for some $e'\in E(G').$
\end{defin}

\begin{defin}\label{def:metric_gonal}
    A metric graph $\Gamma$ is \textbf{$d$-gonal} if there is a non-degenerate harmonic morphism of degree $d$ from a tropical modification $\Gamma'$ of $\Gamma,$ to a metric tree. 
\end{defin}

\begin{defin}\label{def: div_gonal}
    A metric graph is \textbf{divisorially $d$-gonal} if $W_d^1(\Gamma)=\{D \in {\rm Jac}{^d} (\Gamma): {\rm rk}(D)\geq 1\}\neq \emptyset.$ 
\end{defin}

We will consider metric graphs of genus $g> 2.$ Then, as observed in \cite[Remark 9]{MZ25}, the tropical versions of Riemann's and Clifford's theorems, proved respectively in \cite[Theorem 3.6]{AC} and \cite[Theorem A.1]{L}, hold. In particular, divisors of degree $3$ and rank at least 1 have rank precisely $1,$ i.e. $W_3^1(\Gamma)=W_3^1(\Gamma)\setminus W_3^2(\Gamma).$ 
We will also make extensive use of Dhar's burning algorithm \cite{D}, as explained in \cite[Remark 3]{MZ25}, as a sufficient condition to rule out the possibility that a divisor has positive rank, following \cite{BS}.

Let us consider first some examples, which show that the equivalence between trigonality and divisorial trigonality, proved in \cite[Theorem 1.1]{MZ25} for $3$-edge-connected metric graphs, does not hold in general.

To see this, we will use the following results.

\begin{lem}\label{lm:harm_cycle}
Let $G$ be a cycle, then there does not exist a non-degenerate degree 3 harmonic morphism $\varphi:\Gamma=(G,l)\to \Gamma_T=(T,l_T),$ where $T$ is a tree and such that
there are (at least) $3$ points $x_1, x_2,x_3 \in V(G)$ such that $$m_{\varphi}(x_i)=\sum_{\substack{e\in E_{x_i}(G)\\ \varphi(e)=e'}}\mu_{\varphi}(e)=3,$$
for any $i=1, 2, 3,$ $e'\in E_{\varphi(x_i)}(T).$
\end{lem}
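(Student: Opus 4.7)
First, I would show that $\mu_\varphi(e)\leq 2$ for every edge $e\in E(G)$. Traversing the cycle $G$ once yields a closed walk in the tree $T$, and in a tree every edge is a bridge, so such a walk crosses each $e'\in E(T)$ the same number of times in each direction. In particular the number $N$ of edges of $G$ mapping to $e'$ is even; combined with the harmonic identity $\sum_{e:\varphi(e)=e'}\mu_\varphi(e)=3$ and the positivity of indices on non-contracted edges, this forces $N=2$ with indices $\{1,2\}$ for every $e'$ in the image of $\varphi$.

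Next I would analyse the local picture at a vertex $x_i$ with $m_\varphi(x_i)=3$. Because $G$ is a cycle, $x_i$ has exactly two incident edges, hence $m_\varphi(x_i)\cdot\val_T(\varphi(x_i))=\sum_{e\in E_{x_i}(G)}\mu_\varphi(e)\leq 2\cdot 2$ forces $\val_T(\varphi(x_i))=1$. So $\varphi(x_i)$ is a leaf of $T$, and both edges at $x_i$ map to the unique edge $e'_i$ incident to $\varphi(x_i)$ with indices $\{1,2\}$. The fibre identity $\sum_{x\in\varphi^{-1}(y)}m_\varphi(x)=3$ at any $y\in T$ then rules out any two of $\varphi(x_1),\varphi(x_2),\varphi(x_3)$ coinciding, so we obtain three pairwise distinct leaves.

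Finally, I would derive a contradiction from the resulting tripod picture. The closed walk bounces at each leaf $\varphi(x_i)$ (enters and exits through $e'_i$) and, between consecutive bounces, follows the unique geodesic of $T$ joining the corresponding pair; these three geodesics meet at a branch point $c^*$ of local degree $3$. Since an interior point of an edge of $T$ carries only two tangent directions, $c^*$ must be a vertex of $T$ with $\val_T(c^*)\geq 3$. As the walk visits $c^*$ and vertices of $T$ can only be hit at vertices of $G$, some $v\in V(G)$ satisfies $\varphi(v)=c^*$. But $v$ has only two incident edges, while the harmonic condition at $v$ demands that every one of the $\geq 3$ edges at $c^*$ be covered with total multiplicity $m_\varphi(v)$ from $E_v(G)$. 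At least one edge at $c^*$ must therefore contribute $0$, forcing $m_\varphi(v)=0$ and hence every edge at $v$ contracted, contradicting non-degeneracy. The crux is the very first step: the bound $\mu_\varphi\leq 2$ is what prevents $m_\varphi(x_i)=3$ from being realized at an interior (non-leaf) vertex of $T$, and without it the tripod argument could not be started.
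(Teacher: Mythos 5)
Your proof is correct and follows essentially the same route as the paper's: both hinge on (i) ruling out index-$3$ edges because $G$ is a cycle while every edge of $T$ is a bridge, (ii) concluding that each $x_i$ must map to a leaf and that these three leaves are distinct by the degree count, and (iii) a valence comparison showing that no vertex of the $2$-regular cycle can sit harmonically and non-degenerately over a vertex of $T$ of valence $\geq 3$. Your parity/closed-walk justification of $\mu_\varphi\leq 2$ and your tripod phrasing of step (iii) are mild rearrangements of the paper's disconnection argument and of its opening observation that $T$ must be a path with only two leaves.
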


\begin{proof}
    Assume by contradiction that such a morphism exists.
    Let us first observe that $T$ cannot have vertices of valence bigger than $2$, i.e. $T$ is a path. Indeed, given a vertex $v\in V(T)$, by harmonicity there must be a vertex in $\varphi^{-1}(v)$ of (at least) the same valence in $G$. Since $G$ is a cycle, all of its vertices have valence $2$, so the valence of $v$ is at most $2$.
    
    Let $t\in V(T)$ a vertex of valence $2$. 
    If $x\in V(G)$ is such that $\varphi(x)=t,$ then the two distinct edges $e_1,e_2\in E_x(G)$ are such that 
    $\varphi(e_1)=e_1^t,$ $\varphi(e_2)=e_2^t$ with $e_1^t,e_2^t$ two distinct edges incident to $t$ in $T$. Notice that neither $e_1,$ nor $e_2$ can be contracted: by harmonicity, if one is contracted the other edge should be contracted as well, but this would contradict the non-degeneracy of the morphism at $x$. 
    Moreover, again by harmonicity, we have
    $m_{\varphi}(x)=\mu_{\varphi}(e_1)=\mu_{\varphi}(e_2).$ Notice that both multiplicities cannot be $3:$ this would mean that there is no other edge in the pre-image of $e_j^t$ and in particular the removal of either $e_1$ or $e_2$ would disconnect $G$ which is not possible since $G$ is a cycle. 
    Then $x\neq x_i$, and the points $x_i$ cannot be sent to vertices in the tree of valence $2$, hence they have to be sent to the leaves of $t$. 
    
    Notice moreover that the $x_i$ have to be mapped to distinct leaves: if $\varphi(x_i)=\varphi(x_j)=t_l$ with $\operatorname{val}(t_l)=1$ then the morphism would have degree $m_{\varphi}(x_i)+m_{\varphi}(x_j)=6.$
    
    Then a morphism of degree $3$ cannot exist since there are only $2$ leaves in $T$.
\end{proof}

Let us recall that a \textbf{tropical modification} of a metric graph $\Gamma$ is a metric graph $\Gamma'$ obtained by gluing metric trees at some points of $\Gamma$.

\begin{lem}\label{lm:harm_cycle2} 
Let $G$ be a cycle and assume that there exists a non-degenerate degree 3 harmonic morphism $\varphi:\Gamma'\to \Gamma_T=(T,l_T),$ where $\Gamma'=(G',l')$ is a tropical modification of $\Gamma=(G,l)$ and $T$ is a tree.
Then the existence of (at least) $3$ points $x_1, x_2,x_3 \in \Gamma$ such that $$m_{\varphi}(x_i)=2,$$
for any $i=1,2,3,$
imposes conditions on $d(x_i,x_j)$ for $i,j\in\{1,2,3\}$.
\end{lem}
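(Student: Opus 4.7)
The plan is to analyse the local behaviour of $\varphi$ at each $x_i$ on the cycle $\Gamma$, then exploit the tree structure of $T$ to extract non-trivial relations on the arc-lengths between the $x_i$'s, and hence on the distances $d(x_i,x_j)$.

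First, at each $x_i$, since $G$ is a cycle (so every vertex has valence $2$) and a tropical modification only attaches trees, exactly two cycle-edges are incident to $x_i$. By the harmonicity/non-degeneracy argument used in the proof of Lemma \ref{lm:harm_cycle}, contracting one of them would force the other to be contracted as well, which is incompatible with the cycle structure. The condition $m_\varphi(x_i)=2$ then leaves only a short list of local pictures at $x_i$: either the two cycle-edges at $x_i$ both map to the same edge $e'_i\in E_{\varphi(x_i)}(T)$ with multiplicity $1$ each (the ``fold'' case), or they map to two distinct outgoing edges at $\varphi(x_i)$, each with multiplicity $2$ (in which case the global degree $3$ forces a very rigid structure on the remaining part of $\varphi^{-1}(\varphi(x_i))$).

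Assume next the (generic) fold case at all three points. The $x_i$'s cut $\Gamma$ into three arcs $a_{12},a_{23},a_{31}$ of lengths $l_{12},l_{23},l_{31}$, with $l_{12}+l_{23}+l_{31}=L$ for $L$ the total length of the cycle. Each $a_{ij}$ is mapped onto a path in $T$ from $\varphi(x_i)$ to $\varphi(x_j)$ with some locally constant multiplicity $\mu_{ij}\geq 1$ (possibly with further folding). Since $T$ is a tree, the three images $\varphi(x_1),\varphi(x_2),\varphi(x_3)$ span a tripod with a central point $c\in T$; setting $r_i:=d_T(c,\varphi(x_i))\geq 0$ one has $d_T(\varphi(x_i),\varphi(x_j))=r_i+r_j$, and because the image of $a_{ij}$ must cover this geodesic we obtain
\begin{equation*}
\mu_{ij}\, l_{ij}\;\geq\; r_i+r_j,
\end{equation*}
with equality exactly when $\varphi|_{a_{ij}}$ has no further folds. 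Together with the closure condition $l_{12}+l_{23}+l_{31}=L$ and the constraint $\deg\varphi=3$ (which bounds the admissible triples $(\mu_{12},\mu_{23},\mu_{31})$), these inequalities yield explicit relations among the $l_{ij}$: in the simplest case $\mu_{ij}=1$ throughout, one recovers the triangle inequalities $l_{ij}\leq l_{ik}+l_{jk}$, and via $d(x_i,x_j)=\min(l_{ij},L-l_{ij})$ these translate into the claimed conditions on the pairwise distances.

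The main obstacle is to rule out or separately analyse the non-fold local pictures at some $x_i$, as well as the effect of tropical modifications attaching trees at the $x_i$'s (which could in principle absorb part of $m_\varphi(x_i)$). I expect these side cases either to be immediately excluded by the degree-$3$ constraint or to produce even stronger rigidity, forcing some $r_i$ to vanish and thus collapsing the tripod configuration to a degenerate one covered by the argument of Lemma \ref{lm:harm_cycle}.
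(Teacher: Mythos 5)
Your tripod analysis of the all-fold case reproduces, in different language, the second case of the paper's proof (the configuration of Figure \ref{fg:harm_cycle}), but as written the proposal has two genuine gaps. The first is that the non-fold case is not a side case to be ruled out: it is the first and main case of the paper's argument. If at some $x_i$ the two cycle-germs map to distinct edges of $T$ with index $2$ each, the paper shows, by propagating harmonicity along the cycle starting from a maximal segment of index $2$, that the morphism must be one of the three pictures in Figure \ref{fg:harm_cycle_mult}; in particular there is a \emph{unique} maximal arc of index $2$, all three points $x_i$ must lie on it, and the resulting constraint is $d(x_i,x_j)\leq L/3$, where $L$ is the total length of the cycle. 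This case is neither excluded by the degree-$3$ bound nor does it collapse onto a degenerate tripod, so the concluding ``I expect'' paragraph leaves the lemma unproved for such morphisms. (Your local dichotomy at $x_i$ is also incomplete: the two cycle-germs could map to distinct target edges with index $1$ each, supplemented by leaf-edges; the paper sidesteps the enumeration by working with a maximal subsegment of $\Gamma$ through $x_1$ of constant positive index and casing on whether that index is $2$ or $1$.)

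The second gap is that, even in the fold case, the inequalities $\mu_{ij}\,l_{ij}\geq r_i+r_j$ do not by themselves impose any condition on the arc lengths: the $r_i$ and $\mu_{ij}$ are unknowns attached to the hypothetical morphism, not data, so for arbitrary $l_{ij}$ one can satisfy all three inequalities by letting every $r_i$ tend to $0$. To extract an actual constraint you must establish rigidity: harmonicity forces any internal turning point of an arc $a_{ij}$ to carry leaf-edges with prescribed indices, the degree-$3$ bound then excludes further folding and leaves exactly one turning point $p_{ij}$ per arc whose leaf covers the third leg of the tripod, and this forces the exact relations $l_{ij}=r_i+r_j$ (the condition $l(e_i)=l(e_i')$ in the paper). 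Only with these equalities, rather than your one-sided inequalities, do you obtain the strict triangle inequalities $d(x_i,x_j)<d(x_i,x_k)+d(x_k,x_j)$ and hence a genuine condition on the positions of the $x_i$.
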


\begin{proof}
Notice that in general, given an harmonic morphism of degree $d$ from a metric graph to a tree, this can always be extended to a tropical modification of both graphs, by adding trees to the image and accordingly to the domain.
Therefore we will assume that any pre-image of an edge in $T,$ contains an edge in $G'$, contained in $\Gamma.$

We assume that such a morphism exists and show that the distances $d(x_i,x_j)$ cannot be arbitrary.
By definition of multiplicity, for each $i=1,2,3$, we have that there is an edge $e'\in E_{\varphi(x_i)}$ for which either there is an edge $e\in E_{x_i}(G')$ which is mapped by $\varphi$ to $e'$ with index $2$, or there are two distinct edges in $E_{x_i}(G')$ which are both mapped to $e'$ with index $1$.

Let us consider $x_1.$ we first observe that there exists a subset  $e_2 \subset \Gamma$ for which $x_i\in e_2$ and $\varphi$ have positive index on $e_2.$ 
Indeed, if not, this would mean that the two edges in $G'$, contained in $\Gamma$, incident to $x_i$ are contracted via the morphism and this contradicts harmonicity unless the whole cycle is contracted, but this would contradict our previous assumption on the fact that any pre-image of an edge in $T,$ contains an edge in $G'$, contained in $\Gamma$, which will have non-zero index.

Let us assume that $e_2$ is maximal with respect to inclusion.
We will see that $x_2,x_3$ will also be contained in $e_2$, which yields constraints in the distances $d(x_i,x_j)$.

As observed before, $\mu_{\varphi}(e_2)=2$ or $\mu_{\varphi}(e_2)=1$.
We will first consider the case $\mu_{\varphi}(e_2)=2$ and show that the morphism must be as in Figure \ref{fg:harm_cycle_mult}. 

    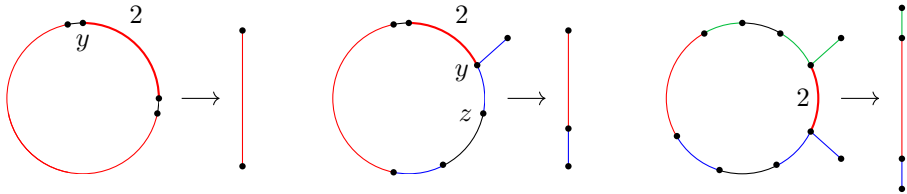
\begin{figure}[ht]\begin{tikzcd}
    \begin{tikzpicture}
    \begin{scope}
    \clip (-1,-1) rectangle (-0.2,0.98);
    \draw[red](0,0) circle (1);
    \end{scope}
    \begin{scope}
    \clip (-1,-1) rectangle (1,-0.2);
    \draw[red](0,0) circle (1);
    \end{scope}
    \begin{scope}
    \clip (0,0) rectangle (1.1,1.1);
    \draw[red,thick](0,0) circle (1);
    \end{scope}
    \begin{scope}
    \clip (-0.2,0.98) rectangle (0,1);
    \draw(0,0) circle (1);
    \end{scope}
    \begin{scope}
    \clip (0.98,-0.2) rectangle (1,0);
    \draw(0,0) circle (1);
    \end{scope}
    \vertex{1,0}
    \vertex{0,1}\vertex{-0.2,0.98}\vertex{0.98,-0.2}
    \draw[->] (1.3,0)--(1.8,0);
    \draw[red] (2.1,-0.9)--(2.1,0.9);
    \draw (0.7,1) node {$2$};\vertex{2.1,-0.9} 
    \draw (0,0.7) node {$y$};\vertex{2.1,0.9}
    \end{tikzpicture}
    &
    \begin{tikzpicture}
    \begin{scope}
    \clip (-1,-1) rectangle (-0.2,0.98);
    \draw[red](0,0) circle (1);
    \end{scope}
    \begin{scope}
    \clip (0,0.44) rectangle (1.1,1.1);
    \draw[red,thick](0,0) circle (1);
    \end{scope}
    \begin{scope}
    \clip (-0.2,-1) rectangle (0.45,-0.88);
    \draw[blue](0,0) circle (1);
    \end{scope}
    \begin{scope}
    \clip (0.9,-0.2) rectangle (1,0.44);
    \draw[blue](0,0) circle (1);
    \end{scope}
    \begin{scope}
    \clip (0.45,-0.88) rectangle (0.98,-0.2);
    \draw[](0,0) circle (1);
    \end{scope}
      \begin{scope}
    \clip (-0.2,0.98) rectangle (0,1);
    \draw(0,0) circle (1);
    \end{scope}
    \draw[blue](0.9,0.44)--(1.3,0.8);
    \vertex{0.9,0.44}\vertex{1.3,0.8}
    \vertex{0,1}\vertex{-0.2,0.98}\vertex{0.98,-0.2}\vertex{-0.2,-0.98}\vertex{0.45,-0.88}
    \draw[->] (1.3,0)--(1.8,0);
    \draw[red] (2.1,-0.4)--(2.1,0.9);\draw[blue] (2.1,-0.9)--(2.1,-0.4);
    \draw (0.7,0.3) node {$y$};
    \draw (0.75,-0.3) node {$z$};
    \draw (0.7,1) node {$2$};\vertex{2.1,-0.9}    \vertex{2.1,-0.4}\vertex{2.1,0.9}
    \end{tikzpicture}&
    \begin{tikzpicture}
    \begin{scope}
    \clip (0,0) rectangle (0.5,1);
    \draw[](0,0) circle (1);
    \end{scope}
    \begin{scope}
    \clip (0.5,0.44) rectangle (0.9,0.9);
    \draw[verde](0,0) circle (1);
    \end{scope}
    \begin{scope}
    \clip (-0.5,0.86) rectangle (0,1);
    \draw[verde](0,0) circle (1);
    \end{scope}
    \begin{scope}
    \clip (0.9,-0.44) rectangle (1.1,0.44);
    \draw[red,thick](0,0) circle (1);
    \end{scope}
    \begin{scope}
    \clip (-1.1,-0.5) rectangle (-0.5,0.86);
    \draw[red](0,0) circle (1);
    \end{scope}
    \begin{scope}
    \clip (-1.1,-1) rectangle (-0.3,-0.5);
    \draw[blue](0,0) circle (1);
    \end{scope}
    \begin{scope}
    \clip (0.45,-0.88) rectangle (0.9,-0.44);
    \draw[blue](0,0) circle (1);
    \end{scope}
    \begin{scope}
    \clip (-0.3,-1.1) rectangle (0.45,-0.88);
    \draw[](0,0) circle (1);
    \end{scope}
    \draw[verde](0.9,0.44)--(1.3,0.8);
    \draw[blue](0.9,-0.44)--(1.3,-0.8);
    \vertex{0.9,0.44}\vertex{1.3,0.8}
    \vertex{0.9,-0.44}\vertex{1.3,-0.8}
    \vertex{0,1}\vertex{0.5,0.86}
    \vertex{-0.5,0.86}\vertex{-0.86,-0.5}\vertex{0.45,-0.88}\vertex{-0.3,-0.95}
    \draw[->] (1.3,0)--(1.8,0);
    \draw[verde] (2.1,0.8)--(2.1,1.2);
    \draw[red] (2.1,-0.8)--(2.1,0.8);
    \draw[blue] (2.1,-0.8)--(2.1,-1.2);
    \draw (0.8,-0.1) node {$2$};\vertex{2.1,-1.2}\vertex{2.1,0.8} \vertex{2.1,-0.8}\vertex{2.1,1.2}
    \end{tikzpicture}
\end{tikzcd}\caption{Non-degenerate degree 3 morphisms from a cycle to a tree (up to tropical modifications). Black edges have arbitrary lengths (also zero).}\label{fg:harm_cycle_mult}
\end{figure}
    
Let $e_1$ be the other edge in $E_{x_1}(G')$, contained in $\Gamma,$ and let $y=\overline{e_1}\cap\overline{e_2}$
Depending on $\mu_{\varphi}(e_1)\leq 1$ (by the maximality of $e_2$) we have one of the above possibilities.
\begin{itemize}
    \item If $\mu_{\varphi}(e_1)=0$, then $\varphi(y)$ must be is also leaf: if not, then by harmonicity there is a tree with index $2$ (or two identical trees with index $1$) glued at $y,$ whose third pre-image is then contained in $\Gamma.$ Such tree then must consist of a single edge and its endpoints must be identified with $y$ and the leaf of the edge glued at $y$, which is not possible.
    Then the morphism is the one represented in the left figure in \ref{fg:harm_cycle_mult}.
    \item If $\mu_{\varphi}(e_1)=1$ and $\varphi(e_1)=\varphi(e_2),$ then $m_{\varphi}(y)=3$ and by the proof of Lemma \ref{lm:harm_cycle}, $\varphi(y)$ is a leaf. In this case the morphism is represented in the left figure in \ref{fg:harm_cycle_mult}, up to contraction of one of the black edges.
    \item If $\mu_{\varphi}(e_1)=1$ and $\varphi(e_1)\neq\varphi(e_2),$ then by harmonicity there is a leaf-edge incident to $y$ with same image as ${\varphi}(e_1)$ with index $1$, as represented in the figure in the center in \ref{fg:harm_cycle_mult}. 
    Let us assume $e_1$ again to be maximal in $\Gamma$ with an image in $\Gamma_T$ with $\mu_{\varphi}(e_1)=1.$ Let $e_3,$ the other consecutive edge to $e_1$, contained in $\Gamma$. 
    Let $z=\overline{e_1}\cap\overline{e_3}.$ We have, by harmonicity, that $\mu_{\varphi}(e_3)\in\{0,1\}.$ 
    If $\mu_{\varphi}(e_3)=0,$ then by harmonicity we require the image of $e_3$ to be a leaf, and the morphism is again  represented in the central figure in \ref{fg:harm_cycle_mult}.
    If instead $\mu_{\varphi}(e_3)=1,$ then ${\varphi}(e_3)={\varphi}(e_1)$, hence $\varphi(z)$ is again a leaf and the morphism is again represented by the same figure, up to contraction of the black edge incident to $z.$
The whole argument can be repeated for the other edge consecutive edge to $e_2,$ which would yield the morphism represented in the right figure in \ref{fg:harm_cycle_mult}.
    
\end{itemize}

The above description shows then that all the possible morphisms, up to addition of leaves over the endpoints of the black edges (with appropriate indices) and arbitrary contraction of black edges, are the ones represented in Figure \ref{fg:harm_cycle_mult}, and in particular that there is a unique maximal edge with index $2,$ which therefore must contain all points $x_i.$
Let $L$ be the total length of the cycle and assume $i,j$ to be such that $d_{ij}=d(x_i,x_j)$ is maximum among all $x_k$ such that $m_{\varphi}(x_k)=2.$
The above then shows that 
$d_{ij}\leq L/3.$
In other words, the position of the points $x_k$ along the cycle cannot be arbitrary.

Let us consider instead the case where $\mu_{\varphi}(e_2)=1$ and consider $e_1$, the other incident edge to $x_1,$ in $\Gamma.$
One can refer to the latter case in the discussion (by regarding $e_2,e_1$ as the edges $e_1,e_3$) to conclude that $\mu_{\varphi}(e_1)=1$. The same then holds for $x_2,x_3$ and between any two consecutive vertices $x_i,x_j$ there must be a vertex $p_{ij}$ which is sent to a vertex of the tree. By harmonicity and the fact that the degree is $3$, we require that on such point there has to be some edge which is sent to the same image of an edge $e_k$ incident to $x_k$ with $k\neq i,j$. This forces $p_{ij}$ to have a leaf-edge, whose image via the morphism will be the same as $e_k$. 
    However, such a morphism is well defined only if the indices of the morphism agree with the edge-length relations: let us denote the edges $e_1,e_1',\dots,e_{3},e_3'$ such that $e_i,e_i',$ $e_i',e_{i+1}$ and $e_3',e_1$ are consecutive.
    Then we require $l(e_i)=l(e_i')=l_i$ for some $l_i\in\mathbb R_{>0}$ (and moreover the length of the leaf-edge at any vertex $p_{ij}$ to be $l_k$ with $k\neq i,j$), as in Figure \ref{fg:harm_cycle}.

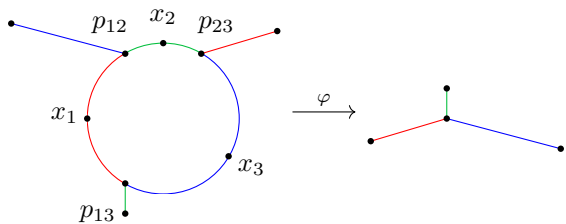
\begin{figure}[ht]\begin{tikzcd}
\begin{tikzpicture}
    \begin{scope}
    \clip (-1,-1) rectangle (-0.5,1.3);
    \draw[red](0,0) circle (1);
    \end{scope}
    \begin{scope}
    \clip (-0.5,0.86) rectangle (0.5,1.3);
    \draw[verde](0,0) circle (1);
    \end{scope}
    \begin{scope}
    \clip (-0.5,-1) rectangle (1,0.86);
    \draw[blue](0,0) circle (1);
    \end{scope}
    \draw[blue](-0.5,0.86)--(-2,1.26);\vertex{-2,1.26}
    \draw[red](0.5,0.86)--(1.5,1.16);\vertex{1.5,1.16}
    \draw[verde](-0.5,-0.86)--(-0.5,-1.26);\vertex{-0.5,-1.26}
    \draw (-1.3,0) node {$x_1$};
    \draw (0,1.3) node {$x_2$};
    \draw (1.16,-0.7) node {$x_3$};
    \draw (0.7,1.2) node {$p_{23}$};
    \draw (-0.7,1.2) node {$p_{12}$};
    \draw (-0.86,-1.3) node {$p_{13}$};
\vertex{-1,0}\vertex{0,1}\vertex{0.86,-0.5} \vertex{0.5,0.86}\vertex{-0.5,0.86}\vertex{-0.5,-0.86}
    \end{tikzpicture}\arrow[r,"\varphi"] &
    \begin{tikzpicture}
    \draw[verde] (0,0.4)--(0,0);  
    \draw[red] (-1,-0.3)--(0,0);  
    \draw[blue] (1.5,-0.4)--(0,0);  
    \vertex{0,0.4}\vertex{-1,-0.3}\vertex{1.5,-0.4}\vertex{0,0}
    \end{tikzpicture}
\end{tikzcd}\caption{A non degenerate degree 3 morphism from a cycle to a tree (up to tropical modifications).}\label{fg:harm_cycle}
\end{figure}
    
    Clearly, the above does not apply if we have more than $3$ vertices $x_i$ because otherwise the morphism would have a higher degree. If instead the vertices are precisely $3$ the existence of such a morphism gives constraints on the position of the points $x_i$. 
    More precisely, we require $d(x_i,x_j)<d(x_i,x_k)+d(x_j,x_k)$ for any $i,j,k\in \{1,2,3\}; i\neq j\neq k.$ 
    Then it is always possible to find a morphism as the one represented in Figure \ref{fg:harm_cycle}, where the position of the points $p_{ij}$ is determined by solving a linear system of $3$ equations defined by writing each distance $d(x_i,x_j)$ as the sum of $d(x_i,p_{ij})$ and $d(x_j,p_{ij})$, where $d(x_i,p_{ij})=d(x_i,p_{ik})$.
\end{proof}

\begin{ex}\label{ex:Luo}
The metric graph $\Gamma$ represented in Figure \ref{fg:Luo}, assuming that the blue edges have equal lengths, is divisorially trigonal. In fact, the degree $3$ divisor $D:=p_1+p_2+p_3\sim 3p_i,$ for any $i=1,2,3$ is easily seen to have rank $1$. However, as explained in \cite[Example 5.13]{ABBR}, there exists no tropical modification of $\Gamma$ admitting a non-degenerate harmonic morphism of degree $3$ to a tree. 

Notice that there exists no non-degenerate degree $2$ harmonic morphism from each of the components glued to the internal cycle to a tree (in blue in the picture).
Therefore, if such a morphism exists, then it has to be of degree $3$ when resticted to each of such components and it has to induce a non-degenerate harmonic morphism of degree $3$ from the subgraph defined by the $3$ edges of the internal cycle (in blue in the picture) to a tree.

Such a morphism cannot exist because it would require, by harmonicity, that the local multiplicity at any separating vertex is $3$ and this is not possible by Lemma \ref{lm:harm_cycle}. Notice that this also holds if we consider tropical modification: if such a morphism would exist after adding leaves, by harmonicity such leaves would have to correspond to some other edges of the graph, not in the internal cycle, thus increasing the degree.

\begin{figure}[h!]\begin{tikzcd}
\begin{tikzpicture}
    \draw(0.8,0.7)[] to [out=240, in=120] (0.8,-0.3);
    \draw(0.8,0.7)[] to [out=300, in=60] (0.8,-0.3);
    \draw[](0.8,-0.3)--(0.8,0.7);
    \vertex{0.8,0.7}
    \draw(-0.85,-2.2)[] to [out=75, in=195] (0,-1.5);
    \draw(-0.85,-2.2)[] to [out=15, in=255] (0,-1.5);
    \draw[](0,-1.5)--(-0.85,-2.2);\vertex{-0.85,-2.2}
    \draw(1.6,-1.5)[] to [out=285, in=165] (2.45,-2.2);
    \draw(1.6,-1.5)[] to [out=345, in=105] (2.45,-2.2);
    \draw[](1.6,-1.5)--(2.45,-2.2);\vertex{2.45,-2.2}  
    \vertex{0,-1.5}\vertex{0.8,-0.3}\vertex{1.6,-1.5}
    \draw[blue] (0,-1.5) to [out=340, in=200] (1.6,-1.5);
    \draw[blue] (0,-1.5) to [out=80, in=190] (0.8,-0.3);
    \draw[blue] (1.6,-1.5) to [out=110, in=350] (0.8,-0.3);
    \draw (0,-1.5) node[anchor=south west] {$p_2$};
    \draw (0.8,-0.3) node[anchor=north] {$p_1$};
    \draw (1.6,-1.5) node[anchor=south east] {$p_3$};
    \end{tikzpicture}    
\end{tikzcd}\caption{Luo's example: a divisorially trigonal metric graph $\Gamma$, which is not trigonal. The edges of $\Gamma$ forming the internal cycle (colored in blue), are assumed to  have the same length.}\label{fg:Luo}
\end{figure}
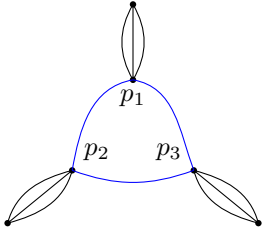

Instead, one can construct a non-degenerate degree $3$ harmonic morphism to a metric graph which contains the same cycle formed by the three edges of the same length, but with lengths multiplied by a factor of $3$, as in Figure \ref{fg:Luo_morph}.
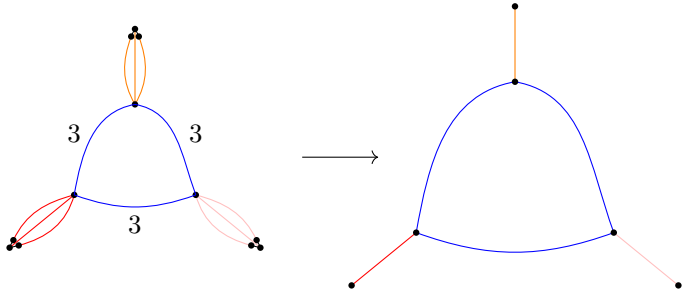
\begin{figure}[ht]\begin{tikzcd}
\begin{tikzpicture}
    \draw(0.8,0.7)[] to [out=240, in=70] (0.75,0.6);
    \draw(0.75,0.6)[orange] to [out=250, in=120] (0.8,-0.3);\vertex{0.75,0.6}
    \draw(0.8,0.7)[] to [out=300, in=110] (0.85,0.6);
    \draw(0.85,0.6)[orange] to [out=290, in=60] (0.8,-0.3);
    \draw[orange](0.8,-0.3)--(0.8,0.7);
    \vertex{0.8,0.7}\vertex{0.85,0.6}
    \draw(-0.85,-2.2)[] to [out=75, in=245] (-0.8,-2.1);
    \draw(-0.8,-2.1)[red] to [out=65, in=195] (0,-1.5);\vertex{-0.8,-2.1}
    \draw(-0.85,-2.2)[] to [out=15, in=195] (-0.73,-2.17);
    \draw(-0.73,-2.17)[red] to [out=15, in=255] (0,-1.5);\vertex{-0.73,-2.17}
    \draw[red](0,-1.5)--(-0.85,-2.2);\vertex{-0.85,-2.2}
    \draw(1.6,-1.5)[pink] to [out=345, in=115] (2.40,-2.1);
    \draw(2.40,-2.1)[] to [out=295, in=105] (2.45,-2.2);\vertex{2.40,-2.1}
    \draw(1.6,-1.5)[pink] to [out=285, in=165] (2.33,-2.17);
    \draw(2.33,-2.17)[] to [out=345, in=165] (2.45,-2.2);\vertex{2.33,-2.17}
    \draw[pink](1.6,-1.5)--(2.45,-2.2);\vertex{2.45,-2.2} 
    \vertex{0,-1.5}\vertex{0.8,-0.3}\vertex{1.6,-1.5}
    \draw[blue] (0,-1.5) to [out=340, in=200] (1.6,-1.5);
    \draw[blue] (0,-1.5) to [out=80, in=190] (0.8,-0.3);
    \draw[blue] (1.6,-1.5) to [out=110, in=350] (0.8,-0.3);
    \draw (0,-0.8) node {$3$};
    \draw (0.8,-2) node{$3$};
    \draw (1.6,-0.8) node{$3$};
    \draw[->](3,-1)--(4,-1);    
    \draw[orange](5.8,0)--(5.8,1);
    \vertex{5.8,1}
    \draw[red](4.5,-2)--(3.65,-2.7);
    \draw[pink](7.1,-2)--(7.95,-2.7);
    \draw[blue] (4.5,-2) to [out=340, in=200] (7.1,-2);
    \draw[blue] (4.5,-2) to [out=80, in=190] (5.8,0);
    \draw[blue] (7.1,-2) to [out=110, in=350] (5.8,0);
    \vertex{5.8,0}\vertex{4.5,-2}\vertex{3.65,-2.7}\vertex{7.1,-2}\vertex{7.95,-2.7}
    \end{tikzpicture}    
\end{tikzcd}\caption{A non-degenerate harmonic morphism of degree $3$.}\label{fg:Luo_morph}
\end{figure}
\end{ex}

\begin{defin}
    \label{def:t3}
    We say that $\Gamma_{T_{\Delta}}=(T_{\Delta},l_{T_{\Delta}})$ is a metric \textbf{tree of triangles} if $T_{\Delta}$ is a graph without loops whose only (minimal) cycles are triangles of edges $\{e_1,e_2,e_3\}$ with $l_{T_{\Delta}}(e_i)=l_{T_{\Delta}}(e_j)$ for any $i,j=1,2,3$ with no edge in common.
\end{defin}

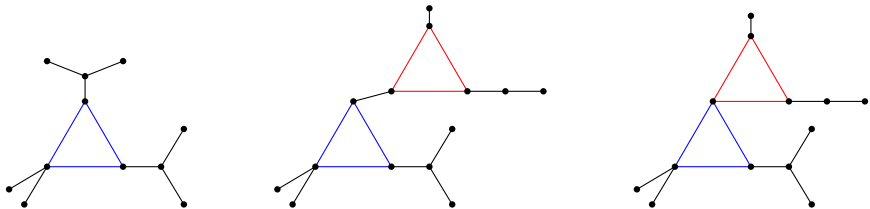
\begin{figure}[http]\begin{tikzcd}
\begin{tikzpicture}
    \draw(0,0)[blue]--(0.5,0.866);
    \draw(0,0)[blue]--(1,0);
    \draw(0.5,0.866)[blue]--(1,0);
    \draw(0.5,0.866)--(0.5,1.2);
    \draw(0,1.4)--(0.5,1.2);
    \draw(1,1.4)--(0.5,1.2);
    \draw(0,0)--(-0.5,-0.3);
    \draw(0,0)--(-0.3,-0.5);
    \draw(1,0)--(1.5,0);
    \draw(1.5,0)--(1.8,-0.5);
    \draw(1.5,0)--(1.8,0.5);
    \vertex{0,0}\vertex{0.5,0.866}\vertex{1,0}
    \vertex{0.5,1.2}\vertex{0,1.4}\vertex{1,1.4}
    \vertex{-0.5,-0.3}\vertex{-0.3,-0.5}\vertex{1.5,0}
    \vertex{1.8,-0.5}\vertex{1.8,0.5}
    \end{tikzpicture}   &
    \begin{tikzpicture}
    \draw(0,0)--(-0.5,-0.3);
    \draw(0,0)--(-0.3,-0.5);
    \draw(0,0)[blue]--(0.5,0.866);
    \draw(0,0)[blue]--(1,0);
    \draw(0.5,0.866)[blue]--(1,0);
    \draw(0.5,0.866)--(1,1);
    \draw(1,1)[red]--(1.5,1.866);
    \draw(1,1)[red]--(2,1);
    \draw(1.5,1.866)[red]--(2,1);\draw(1.5,0)--(1.8,-0.5);
    \draw(1.5,0)--(1.8,0.5);\draw(1,0)--(1.5,0);
    \vertex{0,0}\vertex{0.5,0.866}\vertex{1,0}
    \draw(2,1)--(2.5,1);\draw(3,1)--(2.5,1);\vertex{2.5,1}\vertex{3,1}
    \vertex{-0.5,-0.3}\vertex{-0.3,-0.5}\vertex{1.5,0}
    \vertex{1.8,-0.5}\vertex{1.8,0.5}
    \draw(1.5,1.866)--(1.5,2.1);
 \vertex{1,1}\vertex{1.5,1.866}\vertex{2,1}\vertex{1.5,2.1}
    \end{tikzpicture}   &
    \begin{tikzpicture}\draw(0,0)--(-0.5,-0.3);
    \draw(0,0)--(-0.3,-0.5);
    \draw(0,0)[blue]--(0.5,0.866);
    \draw(0,0)[blue]--(1,0);
    \draw(0.5,0.866)[blue]--(1,0);
    \draw(0.5,0.866)[red]--(1,1.732);
    \draw(0.5,0.866)[red]--(1.5,0.866);
    \draw(1,1.732)[red]--(1.5,0.866);
    \draw(1,1.732)--(1,2);
    \draw(1.5,0)--(1.8,-0.5);
    \draw(1.5,0)--(1.8,0.5);\draw(1,0)--(1.5,0);
    \draw(1.5,0.866)--(2,0.866);
    \draw(2,0.866)--(2.5,0.866);
    \vertex{0,0}\vertex{0.5,0.866}\vertex{1,0}
  \vertex{1,1.732}\vertex{1,2}\vertex{1.5,0.866}\vertex{2.5,0.866}\vertex{2,0.866}
    \vertex{-0.5,-0.3}\vertex{-0.3,-0.5}\vertex{1.5,0}
    \vertex{1.8,-0.5}\vertex{1.8,0.5}
    \end{tikzpicture}   
\end{tikzcd}\caption{Some examples of metric tree of triangle. Edges of the same color have same length.}\label{fg:triangle}
\end{figure}

However, there exist divisorially trigonal graphs, with no non-degenerate harmonic morphism of degree $3$ to a tree of triangles, or even a tree of $n$-cycles, of edges of the same lengths, with $n>3$. 

Let us recall the definition of separating vertex of a graph.
\begin{defin} A vertex $v\in V(G)$ is said to be \emph{separating} if $G$ admits a decomposition into two connected subgraphs $G_1$ and $G_2$ which have only $v$ in common. The graph $G$ can then be obtained as a disjoint union of $G_1$ and $G_2$, identified along the vertex $v$: we say that such a decomposition yields a separation of $G$.

\end{defin}
In particular, if $G$ has a loop incident at $v$, $v$ is always separating. Moreover, if there are no loops of $G$ incident to $v$, we have that $v$ is separating if and only if it is a cut vertex of $G$. 

\begin{defin}\label{def:neck}
    A \emph{necklace graph} is a connected graph $G$ that admits a decomposition into subgraphs $G_0, G_1, \dots, G_n$\sloppy, for $n\geq 3$, such that $G_0$ is a cycle, $G_i\cap G_j=\emptyset $ and $G_0\cap G_i=\{v_i\}$ is a separating vertex of $G$, $i=1\dots, n$.
    A \emph{necklace} is a metric graph whose underlying graph is a necklace graph.
\end{defin}

\begin{ex}\label{ex:neck}
We consider a metric graph $\Gamma$ that is a necklace, with a cycle $\gamma$ with separating vertices $x_1,\dots,x_4$ such that $\Gamma\setminus \gamma$ is given by components $\Gamma_i,$ with $\Gamma_i$ glued at $x_i$ and such that $2x_i\in W_2^1(\Gamma_i)$ for any $i=1,\dots,4,$ as in Figure \ref{fg:neck_hyp}.
    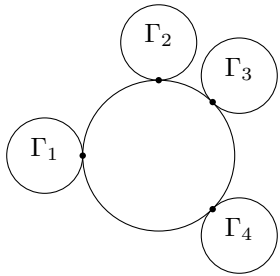
\begin{figure}[ht]\begin{tikzcd}
\begin{tikzpicture}
    \draw(0,0) circle (1);
    \draw(-1.5,0) circle (0.5);
    \draw (-1.5,0) node {$\Gamma_1$};
    \draw(0,1.5) circle (0.5);
    \draw (0,1.5) node {$\Gamma_2$};
    \draw(1.06,1.06) circle (0.5);
    \draw (1.06,1.06) node {$\Gamma_3$};
    \draw(1.06,-1.06) circle (0.5);
    \draw (1.06,-1.06) node {$\Gamma_4$};
    \vertex{-1,0}\vertex{0,1}\vertex{0.71,0.71}\vertex{0.71,-0.71}
    \end{tikzpicture}    
\end{tikzcd}\caption{A necklace of hyperelliptic graphs.}\label{fg:neck_hyp}
\end{figure}

The metric graph represented in figure \ref{fg:neck_hyp} is divisorially trigonal. Indeed, for any $3$ points $p_1,p_2,p_3$ in the cycle, define $D=p_1+p_2+p_3\in W_3^1(\Gamma).$

For any $w\in \gamma$, $D\sim 2w+w'$ for some $w'\in\gamma.$ To see this, it suffices to notice that any two edges in $\gamma$ form a $2$-edge cut, then we can define a rational function of slope $-1$ from the two closest points to $w$ between $p_1,p_2,p_3,$ each along a path of length $\operatorname{min}\{d(w,p_i);i=1,2,3\}$ towards $w$, and constant everywhere else.
This yields a linear equivalent divisor $w+p_1'+p_2'.$ Repeating again from the points $p_1',p_2'$ along paths towards $w$ yields the divisor $2w+w'.$ 
By the above argument, $D\sim 2x_i+x_i'$ for some $x_i'\in\gamma,$ hence $\Gamma$ is divisorially trigonal since $2x_i\in W_2^1(\Gamma_i)$. 

The metric graph is then divisorially trigonal for any number of edges in the cycle and for arbitrary lengths.
However, by Lemma \ref{lm:harm_cycle2}, the existence of a tropical modification admitting a non-degenerate harmonic morphism of degree $3$ to a metric tree depends on the position of the points $x_i$, therefore it might not exist.
\end{ex}

\begin{rk}\label{rk:neck_not_hyp}
    A $2$-edge connected necklace $\Gamma$ cannot be hyperelliptic.
    Assume indeed by contradiction there is $H\in W_2^1(\Gamma),$ and let $x$ be a separating vertex. By $2$-edge connectivity we have $\operatorname{val}(x)\geq 4,$ with two incoming edges contained in the cycle defining the necklace. Let $y\in \Gamma$ such that $H\sim x+y.$ One can check that starting the fire at either the component glued at $x$ or from the cycle burns the entire graph unless $\operatorname{val}(x)= 4$ and $y=x,$ hence $H\sim 2x.$ Then the rational function with slope $-1$ from $x$ along the two edges within the cycle for two paths of equal length, until the first one reaches the closest separating vertex, yields $2x\sim x_1+x_2,$ with $x_i$ a separating vertex for at least one $i\in\{1,2\}$ and $x_1\neq x_2.$ Starting Dhar's burning algorithm from the component glued at $x_i$ burns the whole graph, hence $x_1+x_2\sim H$ cannot have rank $1.$
\end{rk}

Notice that the metric graphs in Examples \ref{ex:Luo} and \ref{ex:neck} are necklaces.
If we do exclude such graphs, then \cite[Theorem 1.1]{MZ25} can be generalized to a metric graph with no assumption on its edge connectivity.

\begin{theo}\label{th:main_general}
    Let $\Gamma$ be a metric graph which is not a necklace with canonical loopless model $(G_{-},l_{-})$. The following are equivalent.
    \begin{itemize}
        \item[A.] $|V(G_{-})|=2,3$ or $\Gamma$ is trigonal.
        \item[B.]$\Gamma$ is divisorially trigonal.
        \end{itemize}
\end{theo}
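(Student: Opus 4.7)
The direction A $\Rightarrow$ B is the easy one and is essentially observed in the introduction: if $\Gamma$ is trigonal, the pullback of a point in the target tree along the non-degenerate degree-$3$ harmonic morphism is an effective divisor of degree $3$ and positive rank on $\Gamma$, and the low-complexity cases $|V(G_{-})|\in\{2,3\}$ are handled by direct inspection since the canonical loopless model on so few vertices is very constrained. The substantive content is the converse, so we assume $\Gamma$ is divisorially trigonal, not a necklace, with $|V(G_{-})|\geq 4$, and we aim to produce a non-degenerate harmonic morphism of degree $3$ from a tropical modification of $\Gamma$ to a metric tree.

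The plan is a two-layer case analysis that mirrors the organisation of Sections \ref{sc:hyperelliptic} and \ref{sc:non_hyp}. The outer split is by edge connectivity: when $(G_{-},l_{-})$ is $3$-edge connected, Theorem \ref{th:main3conn} applies verbatim and concludes the proof, so the interesting situation is the one where $G_{-}$ has bridges or $2$-edge cuts. The inner split is between hyperelliptic and non-hyperelliptic behaviour, which is forced by the low edge-connectivity via \cite[Proposition A.4]{CKK} and \cite[Lemma 5.3]{BN}. In the (partially) hyperelliptic case I would use the hyperelliptic involution of the hyperelliptic sub-curve to manufacture, after the prescribed tropical modifications, a non-degenerate degree-$2$ harmonic morphism to a tree, promote it to degree $3$ by adjoining a leaf of multiplicity $1$ over an endpoint of the target, and then reattach the complementary part of $\Gamma$ along the separating vertex joining the two pieces; the gluing succeeds outside the hyperelliptic necklace configurations of Subsection \ref{ssc:hyp_necklace}, which are ruled out by our hypothesis.

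In the non-hyperelliptic case I would mimic the fibre-construction used in the $3$-edge connected case: for each $x\in\Gamma$, select the effective divisor $D_{x}=x+y_{x}+z_{x}$ equivalent to a fixed $D\in W^{1}_{3}(\Gamma)$ and supported at $x$, then argue that the unordered pair $\{y_{x},z_{x}\}$ is uniquely determined by $x$, so that $x\mapsto\{x,y_{x},z_{x}\}$ yields the fibres of the desired morphism, whose harmonicity and non-degeneracy are verified locally at vertices using Dhar's burning algorithm as in \cite[Section 2]{MZ25}. The main obstacle, and precisely the reason for excluding necklaces, is the failure of uniqueness of $D_{x}$ along a cycle of separating vertices: as Example \ref{ex:neck} demonstrates, any two edges of such a cycle form a $2$-edge cut and the slope-$\pm 1$ sliding trick produces infinitely many linearly equivalent effective divisors with support spread along the cycle, making the assignment $x\mapsto\{y_{x},z_{x}\}$ ill-defined. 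The non-necklace hypothesis is exactly what rules out this pathology, after which the combinatorial argument from \cite{MZ25} carries through and assembles the local fibre data into a global non-degenerate harmonic degree-$3$ morphism to a metric tree, establishing that $\Gamma$ is trigonal.
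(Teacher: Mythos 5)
Your overall architecture (easy direction by pullback, then a case split into $3$-edge connected, partially hyperelliptic, and non-hyperelliptic behaviour) matches the paper's organisation, but the core step of your non-hyperelliptic argument has a genuine gap. You propose to define the fibres of the morphism by $x\mapsto\{x,y_x,z_x\}$ where $D_x=x+y_x+z_x\sim D$, and you assert that the unordered pair $\{y_x,z_x\}$ is uniquely determined by $x$ once necklaces are excluded. This is false: the uniqueness and disjointness of supports of admissible representatives, which is what makes the construction of \cite{MZ25} work in the $3$-edge connected case, already fails for $2$-edge connected graphs that are \emph{not} necklaces (see Figure \ref{fig:adm_2_conn}, a chain of cycles). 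The necklace hypothesis is not what rescues well-definedness here. The paper's fix is to introduce, for each vertex $x$, the \emph{maximal} admissible representative $D_x^{\operatorname{max}}$ (the one maximising the coefficient at $x$), to prove its uniqueness (Lemma \ref{lm:max_unique}) via Proposition \ref{prop:k_cut} and Lemma \ref{lm:length2}, and to show that consecutive maximal representatives are separated by $k$-edge cuts with $k=2,3$ or by bridges. This forces the morphism to carry indices $2$ and $3$ on some edges (with the length relation $2l(e_1)=l(e_2)$ on a $2$-cut, and length tripling on bridges), so the argument from \cite{MZ25}, where all indices are $1$, does not ``carry through'' without these new ingredients. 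Your proposal is missing both the maximality device and the index/length bookkeeping it necessitates.

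A second, smaller inaccuracy concerns the gluing of the hyperelliptic piece. The relevant objects are the $D$-hyperelliptic halves of Definition \ref{def:D_hyp}, which can occur inside a globally non-hyperelliptic graph (Figure \ref{fig:hyp_block}); such a half $\Gamma_1$ meets $\overline{\Gamma\setminus\Gamma_1}$ in the two points $x_0,y_0$ with $x_0+y_0\sim H$ (Lemma \ref{lm:gluing}), not along a single separating vertex, and the auxiliary tree must be attached at the specific point $p$ with $D\sim H+p$ rather than over an arbitrary endpoint of the target. The non-necklace hypothesis enters precisely to guarantee properties (H1) and (H2) for these halves (Lemmas \ref{lm:necklace1} and \ref{lm:necklace2}), which is what makes the glued morphism harmonic at the intersection points; your sketch attributes the role of this hypothesis to the wrong step.
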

\begin{proof}[Proof of $A.\Rightarrow B$]
    The proof of the implication $A.\Rightarrow B.$ follows precisely \cite[Theorems 3.2 and 3.9]{MZ25}.
\end{proof}

The proof of the main theorem in the other direction is more difficult. The goal is to construct a morphism from the datum of a divisor $D\in W_3^1(\Gamma).$ As in the $3$-edge connected case, this will be done by studying the combinatorics of the metric graph $\Gamma,$ which is much more complicated when bridges or pairs of disconnecting edges are allowed. 

Some metric graphs might indeed be partially hyperelliptic, and in this case, we will see that a non-degenerate harmonic degree $3$ morphism over the hyperelliptic part to a metric tree can be constructed locally, as prescribed by the hyperelliptic structure. Such a morphism, however, might not extend to the whole graph, as discussed in Example \ref{ex:neck}.

Instead, if we consider metric graphs which are necklaces, as in Example \ref{ex:Luo}, the non-degenerate harmonic degree $3$ morphism to a tree (even if we allow tropical modifications) does not exist in general. However, in some cases, we can still characterize divisorial trigonality as the existence of a non-degenerate harmonic degree $3$ morphism to a triangle of trees, for a certain class of graphs. 

\begin{defin}\label{def:s_trig_neck}
    A \emph{non-hyperelliptic necklace} is a necklace $\Gamma$ such that none of the connected components of $\Gamma\setminus \gamma$
    is hyperelliptic, with $\gamma$ as in Definition \ref{def:neck}.
\end{defin}

\begin{theo}\label{th:main_neck}
    Let $\Gamma$ be a \emph{non-hyperelliptic necklace}. The following are equivalent.
    \begin{itemize}
        \item[A.] There exist a non-degenerate harmonic morphism of degree $3$ $\varphi':\Gamma'\to \Gamma_{T_{\Delta}}$ with $\Gamma'$ a tropical modification of $\Gamma,$ and $\Gamma_{T_{\Delta}}$ a metric tree of triangles, as in Definition \ref{def:t3},
        such that the pre-image of any cycle in $\Gamma_{T_{\Delta}}$ is given by the same cycle whose edge lengths are divided by $3.$
        \item[B.]$\Gamma$ is divisorially trigonal.
        \end{itemize}
\end{theo}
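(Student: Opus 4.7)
I begin with the easier direction $A\Rightarrow B$. Given a harmonic morphism $\varphi':\Gamma'\to\Gamma_{T_\Delta}$ of degree $3$ as in the statement, pull back any point $p$ of $\Gamma_{T_\Delta}$ to obtain an effective divisor $\varphi'^*(p)$ of degree $3$ on $\Gamma'$. The cycle condition in the hypothesis forces the morphism on the $3$-cycle lying over each triangle to have index $3$ on every edge, so $\varphi'^*(p)$ is a single source point with multiplicity $3$ whenever $p$ lies in a triangle. A direct computation on this source $3$-cycle (which has genus one and whose Jacobian is $\mathbb{R}/L\mathbb{Z}$) shows that any chip can be moved to any prescribed position on the cycle by reassigning the other two chips along $\gamma$, hence $\varphi'^*(p)$ has rank at least $1$ on $\Gamma'$. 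Since rank is preserved under tropical modification, this yields $\varphi'^*(p)\in W_3^1(\Gamma)$.

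For $B\Rightarrow A$, let $D\in W_3^1(\Gamma)$. The first step is to show that $D$ can be chosen supported on the central cycle $\gamma$ of the necklace. Because every $v_i$ is a separating vertex and any two edges of $\gamma$ form a $2$-edge cut, the chip-sliding argument from Example \ref{ex:neck} allows one to push chips around $\gamma$ and across the $v_i$ into or out of the $\Gamma_i$ only in tightly controlled configurations. The non-hyperelliptic hypothesis on each $\Gamma_i$ is the key input: if a linearly equivalent representative of $D$ had two chips trapped inside a single $\Gamma_i$, then running Dhar's burning algorithm from the opposite side would witness a $g^1_2$ on $\Gamma_i$, contradicting non-hyperellipticity.

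Once $D\sim p_1+p_2+p_3$ with $p_j\in\gamma$, Example \ref{ex:neck} gives that for every $w\in\gamma$ there is a unique $w'\in\gamma$ with $D\sim 2w+w'$. The assignment $w\mapsto w'$ defines a canonical order-$3$ cyclic shift on $\gamma$ whose quotient is an equilateral triangle $\Delta$ of perimeter one third of that of $\gamma$. This data realises $\gamma$ as the source of a non-degenerate harmonic degree-$3$ morphism $\gamma\to\Delta$, each edge carrying index $3$, exactly as in Figure \ref{fg:Luo_morph}. To extend the morphism over each component $\Gamma_i$, I would apply Theorem \ref{th:main_general} to an auxiliary graph $\widetilde{\Gamma}_i$ obtained by replacing $\gamma$ with a short tripod glued to $\Gamma_i$ at $v_i$ carrying the chip configuration induced at $v_i$ by the sheet structure; non-hyperellipticity of $\Gamma_i$ guarantees that $\widetilde{\Gamma}_i$ is not a necklace and is still divisorially trigonal, hence Theorem \ref{th:main_general} produces a harmonic degree-$3$ morphism from a tropical modification of $\widetilde{\Gamma}_i$ to a metric tree $T_i$. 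Gluing each $T_i$ at the vertex of $\Delta$ prescribed by the sheet label $\iota(i)\in\{1,2,3\}$ of $v_i$ yields a metric tree of triangles $\Gamma_{T_\Delta}$ in the sense of Definition \ref{def:t3}.

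The main obstacle I anticipate is verifying global compatibility of indices at the separating vertices $v_i$: when $n>3$, several components $\Gamma_i$ are assigned to the same sheet label, and the local morphisms produced independently by Theorem \ref{th:main_general} must glue into a harmonic extension at the corresponding vertex of $\Delta$. Balancing the local indices may require appending further equilateral triangles to $\Gamma_{T_\Delta}$, and one must show that the equal-length condition of Definition \ref{def:t3} is always achievable; this combinatorial bookkeeping, together with the case analysis of which $v_i$'s receive which sheet label, is the technical heart of the argument.
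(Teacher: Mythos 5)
Your high-level architecture for $B\Rightarrow A$ (push the divisor onto the central cycle using non-hyperellipticity of the components, treat each component via Theorem \ref{th:main_general}, and glue over a triangle) matches the paper's, but the construction of the morphism on the cycle itself — the step the theorem's extra hypothesis is about — does not work as you describe it. In the Abel--Jacobi coordinate on $\gamma$, identified with $\mathbb{R}/L\mathbb{Z}$, the assignment $w\mapsto w'$ determined by $D\sim 2w+w'$ is the affine map $s\mapsto c-2s$; it is not an order-$3$ cyclic shift, and the morphism $\gamma\to\Delta$ is not a quotient. A genuine $3$-fold covering $\gamma\to\Delta$ with $\Delta$ of perimeter $L/3$ would have local degree $1$ at each attachment point $v_i$ and would send all the $v_i$ to a single vertex of $\Delta$, which is incompatible with harmonically attaching the degree-$3$ morphisms $\Gamma_i\to T_i$ there. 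The correct map is the one of Figure \ref{fg:Luo_morph}: each edge of $\gamma$ carries index $3$, so the target triangle has edge lengths \emph{three times} those of $\gamma$ (not one third, as you write), and $m_{\varphi}(v_i)=3$, matching $(\varphi')^*(t_i)=3v_i$. To set this up you must first prove the structural facts you skip over: since each $\Gamma_i$ is non-hyperelliptic, Lemma \ref{lm:supp_sep_vert} forces $D\sim 3v_i$ at every attachment point, and the equation $3s\equiv 0$ in $\mathbb{R}/L\mathbb{Z}$ has exactly three solutions, so there are exactly three components and the $v_i$ are equidistant (Lemma \ref{lm:3comp}). This eliminates the ``$n>3$ sheet-label'' bookkeeping you identify as the technical heart — it never arises — and it is also what makes the equilateral target triangle exist at all, which you assert without justification.

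For $A\Rightarrow B$ your argument only shows that $D-w$ is equivalent to an effective divisor for $w$ on the central cycle; you must also treat $w$ in a component, i.e.\ $w\in(\varphi')^{-1}(T_i)$. Since $\Gamma_{T_{\Delta}}$ has positive genus, the usual ``pull back a linearly equivalent point'' argument for tree targets does not apply directly; the paper handles this by combining $(\varphi')^*(t_i)=3p_i\sim p_1+p_2+p_3\sim 3p_j$ with the pullback argument of Theorems 3.2 and 3.9 of \cite{MZ25} applied over each tree $T_j$. This omission is fixable but real.
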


Similarly to the main Theorem \ref{th:main_general}, we prove first that the existence of the morphism  determines a divisor of degree $3$ and rank at least $1$. The proof in the other direction will be discussed in Section \ref{ssc:trig_necklace}.

\begin{proof}[Proof of Theorem \ref{th:main_neck}, $A.\Rightarrow B$]
  Let us assume that there exists a non-degenerate harmonic morphism of degree $3$ to a metric tree of triangles $\varphi':\Gamma'\to \Gamma_{T_{\Delta}}$ with $\Gamma'$ a tropical modification of $\Gamma.$ 
    For simplicity, we also assume that there is precisely one cycle $\Gamma_{T_{\Delta}}$. In case of more cycles, the following argument can be repeated.
    Let us observe that from the definition of metric tree of triangles, the graphs obtained by removing all cycles is a disjoint union of metric trees.
    Let $t_i\in \Gamma_{T_{\Delta}}$ be a vertex of the cycle over which a tree $T_i$ is glued. Then, since 
    the pre-image of any cycle in $\Gamma_{T_{\Delta}}$ is given by the same cycle whose edges-lengths are divided by $3$ we have $D:=(\varphi')^*(t_i)=3p_i$, with $p_i$ the corresponding separating vertex in the cycle in $\Gamma$. 
    
    Moreover, since the edges of the cycle have the same length then one has $D:=(\varphi')^*(t)=3p_i\sim p_1+p_2+p_3$ which proves that $D-w\sim E$ for any point $w$ on the cycle.

    If instead $w\in (\varphi')^{-1}(T_i)$ one shows that $3p_i-w\sim E$ for some effective divisor $E$ using the same argument in $A.\Rightarrow B.$ in \cite[Theorems 3.2 and 3.9]{MZ25}. Finally, since the fact that edges on the cycle have same length also implies that $3p_i\sim 3p_j$ for $i\neq j$, we can repeat the argument $w\in (\varphi')^{-1}(T_j),$ with $j\neq i.$ Therefore the rank of $D$ must be $1.$
\end{proof}

Finally, we will also prove that the metric graphs considered in Theorem \ref{th:main_neck} do not exist for low genera: if the genus is assumed to be sufficiently small, then divisorial trigonality and trigonality coincide.

\begin{cor}\label{cor:main_g6}
    Let $\Gamma$ be a metric graph with canonical loopless model $(G_{-},l_{-})$ of genus $g\leq 5$. The following are equivalent.
    \begin{itemize}
        \item[A.] $|V(G_{-})|=2,3$ or $\Gamma$ is trigonal.
        \item[B.]$\Gamma$ is divisorially trigonal.
        \end{itemize}
\end{cor}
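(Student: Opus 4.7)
The plan is to combine Theorem~\ref{th:main_general} with a genus count showing that the obstruction captured by Theorem~\ref{th:main_neck} cannot occur for $g\leq 5$. The implication $A\Rightarrow B$ is immediate: trigonality always gives divisorial trigonality by pulling back a point of the target tree, and the case $|V(G_{-})|=2,3$ is treated exactly as in the proof of Theorem~\ref{th:main_general}. So I concentrate on $B\Rightarrow A$.

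I would first split on whether $\Gamma$ is a necklace. If not, Theorem~\ref{th:main_general} finishes the argument. If $\Gamma$ is a necklace with decomposition $G_{-}=G_{0}\cup G_{1}\cup\dots\cup G_{n}$, $n\geq 3$, as in Definition~\ref{def:neck}, I first observe that each $G_{i}$ with $i\geq 1$ must have positive genus: otherwise $G_{i}$ would be a contractible tree in the canonical loopless model, and $v_{i}$ would not survive as a separating vertex. This gives $g(\Gamma)=1+\sum_{i=1}^{n} g(G_{i})\geq 1+n\geq 4$, so there is nothing to prove for $g\leq 3$.

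The key reduction is that $\Gamma$ cannot be a non-hyperelliptic necklace in the sense of Definition~\ref{def:s_trig_neck}: any such necklace has each $G_{i}$ non-hyperelliptic, hence of genus at least $3$ (since every metric graph of positive genus at most $2$ is hyperelliptic), yielding $g(\Gamma)\geq 1+3n\geq 10$, contradicting $g\leq 5$. Thus Theorem~\ref{th:main_neck} does not apply and at least one $G_{i}$ must be hyperelliptic; in fact the genus bound leaves only the configurations (up to permutation) $n=3$ with $(g(G_{1}),g(G_{2}),g(G_{3}))\in\{(1,1,1),(1,1,2)\}$ or $n=4$ with every $g(G_{i})=1$.

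In each of these three explicit configurations every component $G_{i}$ is hyperelliptic, so the partially hyperelliptic analysis of Section~\ref{sc:hyperelliptic} produces, around each separating vertex $v_{i}$, a local model for a non-degenerate harmonic morphism of degree $3$. I would then glue these local data along $G_{0}$ using the given divisor $D\in W^{1}_{3}(\Gamma)$ to obtain a global morphism from a tropical modification of $\Gamma$ to a metric tree, establishing trigonality. The main obstacle I expect is precisely this gluing step: one has to verify that, with the metric data forced by the divisor class, the local hyperelliptic morphisms at the different $v_{i}$ are compatible along the cycle $G_{0}$, which in particular means checking the constraints on the positions of the $v_{i}$ coming from Lemmas~\ref{lm:harm_cycle} and~\ref{lm:harm_cycle2}. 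The crucial point is that since only finitely many combinatorial types arise for $g\leq 5$, this verification reduces to a finite case analysis rather than a general structural argument.
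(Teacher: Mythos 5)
Your reduction is correct and is essentially the same genus count the paper uses: $A\Rightarrow B$ is immediate, non-necklaces are handled by Theorem~\ref{th:main_general}, a necklace has $g=1+\sum_i g(G_i)$ with $n\geq 3$ components of positive genus, a non-hyperelliptic necklace would force $g\geq 10$, and for $g\leq 5$ the only configurations are $(1,1,1)$, $(1,1,2)$ and $(1,1,1,1)$, so at least two components are genus-one cycles and all components are hyperelliptic. Up to this point you and the paper agree.

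The gap is in your final step. Gluing ``around each separating vertex $v_i$ a local model for a non-degenerate harmonic morphism of degree $3$'' coming from the hyperelliptic structure of each $G_i$ is precisely the construction $\varphi_D^{\text{hyp}}$ of Section~\ref{sc:hyperelliptic}, and the whole point of Subsection~\ref{ssc:hyp_necklace} and Example~\ref{ex:neck} is that this gluing \emph{fails} for necklaces: properties (H1)--(H2) break down, and by Lemma~\ref{lm:harm_cycle2} a cycle cannot carry a degree-$3$ harmonic morphism with $m_\varphi(v_i)=2$ at three or more points unless the $v_i$ satisfy metric constraints ($d(v_i,v_j)<d(v_i,v_k)+d(v_j,v_k)$, or all $v_i$ within an arc of length $L/3$). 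These constraints are \emph{not} forced by the existence of $D\in W_3^1(\Gamma)$: on a necklace every degree-$3$ effective divisor supported on $\gamma$ has rank $1$ regardless of where the $v_i$ sit, so the positions are genuinely free. A necklace of four loops is already a genus-$5$ instance (Example~\ref{ex:neck}) where your glued morphism does not exist for generic edge lengths; ``finitely many combinatorial types'' does not rescue this, because the obstruction is metric, not combinatorial. The paper closes these cases with a genuinely different construction (Example~\ref{ex:circle_loops} and Figures~\ref{fg:Luo_loop}, \ref{fg:Luo_loop2}): one distinguished component is used as an anchor via $D\sim 2x+p$ or $D\sim 3x$, the two arcs of $\gamma$ are folded onto single tree edges with indices $2$ and $1$, and \emph{all the other genus-one components are contracted}. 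This works for arbitrary edge lengths exactly because at most one component fails to be a cycle when $g\leq 5$ (Remark~\ref{rk:g6}), and it is this contraction idea, absent from your proposal, that your argument needs to be completed.
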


\subsection{Trigonality and tropical admissible covers of degree 3}\label{ssc:admissible_covers}
As proved in \cite[Proposition 4.7]{MZ25}, a (divisorially) trigonal $3$-edge connected metric graph admits a tropical admissible cover of degree $3$ of a metric tree, as defined in \cite{CMR}.

Let us recall that a \emph{tropical admissible cover} of tropical curves  $\psi:\Gamma_{\operatorname{src}}=(G_{\operatorname{src}},l_{\operatorname{src}})\to \Gamma_{\operatorname{tgt}}=(G_{\operatorname{tgt}},l_{\operatorname{tgt}})$ is a harmonic morphism satisfying the \emph{local Riemann-Hurwitz equation} at any point, i.e. such that for any $v\in V(G_{\operatorname{src}})$ if $v'=\psi(v),$
\begin{equation}\label{eq:RH}
    2=2m_{\psi}(v)-\sum_{e\in E_v(G_{\operatorname{src}})}(\mu_\psi(e)-1).
\end{equation}
Notice that we are only interested in metric graphs with no weights for our purposes.

First, let us observe that it is sufficient to consider the Riemann-Hurwitz inequality, as defined in \cite[Definition 6 (D)]{LC}

\begin{equation}\label{eq:RH2}
    2\leq 2m_{\psi}(v)-\sum_{e\in E_v(G_{\operatorname{src}})}(\mu_\psi(e)-1).
\end{equation}

Indeed, whenever the inequality is strict, the equality can be reached by gluing leaf-edges at $v$ with suitable indices.

Clearly, by definition, a metric graph that admits a tropical admissible cover of degree $3$ of a metric tree is trigonal, but the opposite implication, in general, might not hold if we drop the assumption on the edge connectivity. 

However, we show that if $\Gamma$ admits a degree $3$ harmonic morphism to a metric tree which is not a tropical admissible cover, then $\Gamma$ contains multiple edges or a separating vertex.

\begin{prop}\label{prp:adm_cover}
    Let $\Gamma$ be a trigonal metric graph of genus $g\geq 3,$ with a harmonic morphism (with no contractions\footnote{From \cite[Proposition 2.1]{MZ25}, given a harmonic morphism which doesn't contract loops, there is always a tropical modification of it with no contraction.}) $\varphi:\Gamma'\to\Gamma_T$ of degree $3$ from a tropical modification $\Gamma'$ of $\Gamma$ to a metric tree $\Gamma_T$. If $\varphi$ is not a tropical admissible cover, then $\Gamma$ contains multiple edges or a separating vertex.
\end{prop}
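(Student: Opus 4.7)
The plan is to argue by contrapositive: I assume the canonical loopless model $G_{-}$ of $\Gamma$ has neither multiple edges nor a separating vertex, and show that the non-contracting harmonic morphism $\varphi:\Gamma'\to\Gamma_T$ of degree $3$ must satisfy the Riemann--Hurwitz equality \eqref{eq:RH} at every point of $\Gamma'$, hence is a tropical admissible cover.

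Fix $v\in\Gamma'$, set $v'=\varphi(v)$ and $m=m_\varphi(v)\in\{1,2,3\}$. When $m=1$ the RH equality is immediate, since every edge at $v$ has index $1$ and the valences at $v$ and $v'$ coincide. Suppose $m\in\{2,3\}$ and that RH is strict at $v$. By harmonicity, for each $e'\in E_{v'}(T)$ the indices of edges at $v$ over $e'$ sum to $m$, realizing one of the multiset shapes $(2),(1,1)$ when $m=2$ and $(3),(2,1),(1,1,1)$ when $m=3$. A direct computation with \eqref{eq:RH} then shows that strict RH at $v$ is equivalent to the existence of some $e'$ whose preimage at $v$ contains at least two edges $e_1,\ldots,e_k$ ($k\geq 2$). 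If $v$ lies strictly inside a leaf-tree of the modification, the construction following \eqref{eq:RH2} lets me propagate the defect down the leaf-tree to its attachment in $V(G_{-})$, so I may assume $v\in V(G_{-})$.

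Let $w_1,\ldots,w_k$ be the other endpoints of $e_1,\ldots,e_k$; they all map to the far vertex $w'$ of $e'$. If two of them coincide, say $w_i=w_j$, then $e_i$ and $e_j$ are parallel edges between $v,w_i\in V(G_{-})$, producing a multiple edge in $\Gamma$ and contradicting the hypothesis. Otherwise, assume the $w_i$ are pairwise distinct, and let $T_{e'}$ be the connected component of $T\setminus\{v'\}$ containing $e'\setminus\{v'\}$. When $m=3$, the vertex $v$ is the unique preimage of $v'$, so in $\Gamma'\setminus\{v\}$ the set $\varphi^{-1}(T_{e'})$ (which contains the distinct vertices $w_1,\ldots,w_k$) has no connection to $\varphi^{-1}(T\setminus T_{e'})$: any path between them in $\Gamma'$ would project to a walk in $T$ crossing $v'$, whose only preimage is $v$. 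When $m=2$, there is precisely one additional preimage $u\ne v$ of $v'$ with $m_\varphi(u)=1$, contributing a single edge of index $1$ over $e'$ to some $\widetilde{w}\in\varphi^{-1}(w')$; harmonicity and the same walk-in-tree argument show that among $w_1,\ldots,w_k$ at most one can be joined to $\varphi^{-1}(T\setminus T_{e'})\setminus\{v\}$ via $u$, while the others remain stranded upon removing $v$. In both cases $v$ is a separating vertex of $\Gamma'$, and since added leaf-trees of the modification do not affect the separating status of vertices in $V(G_{-})$, $v$ is a separating vertex of $\Gamma$ as well, contradicting the hypothesis.

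I expect the main obstacle to be the detailed walk analysis in the case $m=2$: rigorously ruling out that the single ``spare'' sheet at $u$ reconnects several $w_i$'s through a roundabout path in $\varphi^{-1}(T_{e'})$. The key point is that any such path in $\Gamma'$ from some $w_i$ to $\varphi^{-1}(T\setminus T_{e'})\setminus\{v,u\}$ must project to a walk in $T$ traversing $v'$, whose preimages are only $v$ and $u$; the degree-$3$ and harmonicity constraints then sharply bound the number of such traversals. A secondary technicality is the reduction step from an added leaf-tree back to $V(G_{-})$, which uses the non-contracting hypothesis and standard harmonic propagation along trees.
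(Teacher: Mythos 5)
Your reduction to the contrapositive and your treatment of the case $m_\varphi(v)=3$ (unique preimage of $\varphi(v)$, hence separating) are fine and match the paper. But the central combinatorial claim on which the rest of your argument hangs is false. You assert that strict failure of Riemann--Hurwitz at $v$ is equivalent to the existence of some $e'\in E_{\varphi(v)}(T)$ whose preimage at $v$ contains at least two edges. Compute the sum in \eqref{eq:RH2} with no contracted edges: for $m_\varphi(v)=2$, a branch $e'$ covered at $v$ by a single edge of index $2$ contributes $1$ to $\sum_{e\in E_v}(\mu_\varphi(e)-1)$, while a branch covered by two edges of index $1$ contributes $0$. So \eqref{eq:RH2} fails at $v$ exactly when at least \emph{three} branches are each covered by a \emph{single} index-$2$ edge; branches with two preimage edges at $v$ push the inequality toward holding, not failing. (E.g.\ three branches of shapes $(2),(2),(1,1)$ give $\sum(\mu-1)=2$ and RH equality, yet a branch with two preimage edges exists; three branches all of shape $(2)$ give $\sum(\mu-1)=3$ and RH failure, yet no branch has two preimage edges.) Your equivalence therefore fails in both directions, and in the genuinely bad configuration the vertices $w_1,\dots,w_k$ you analyze do not exist.

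This is not repairable by a local connectivity argument at $v$, which is your plan for $m=2$. The paper's Figure \ref{fg:g5} exhibits exactly the failure configuration: $v$ carries three index-$2$ edges to distinct vertices $a_1,a_2,a_3$, the second preimage $w$ of $\varphi(v)$ carries three index-$1$ edges to distinct $b_1,b_2,b_3$, and each pair $a_i,b_i$ is joined by parallel edges. There $v$ is \emph{not} a separating vertex, and no multiple edge is incident to $v$; the multiple edges promised by the proposition live in the lobes $\Gamma_i$, away from $v$. Locating them requires the global divisor-theoretic input the paper uses: the morphism produces $2v+w\in W_3^1(\Gamma)$ via Theorem \ref{th:main_general} $A.\Rightarrow B.$, and Dhar's burning algorithm started inside a lobe $\Gamma_i$ shows the graph would burn entirely (contradicting rank $1$) unless each $\Gamma_i$ consists of multiple edges. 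Your proposal has no substitute for this step, and your own closing paragraph flags the $m=2$ walk analysis as the open obstacle; it is in fact where the argument breaks.
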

\begin{proof}
    Let us first notice that if $m_{\varphi}(v)=1,$ then the Riemann-Hurwitz inequality \eqref{eq:RH2} is always satisfied, which is precisely what happens in the $3$-edge connected case.
    Let us also remark that for any $v\in V(G_{\operatorname{src}})$ such that $\operatorname{val}(v)\leq 2$ we have $m_{\psi}(v)=\mu_\psi(e)$ for any $e\in E_v(G_{\operatorname{src}})$ and therefore \eqref{eq:RH2} holds.
    Therefore, there must exist $v\in V(G_{\operatorname{src}})$ with $\operatorname{val}(v)\geq 3$ such that $m_{\varphi}(v)>1$ and
    \begin{equation}\label{eq:no_RH}
    2> 2m_{\varphi}(v)-\sum_{e\in E_v(G_{\operatorname{src}})}(\mu_\varphi(e)-1).
\end{equation}
Since the total degree of the morphism is $3$, we have two possibilities.
\begin{itemize}
    \item If $m_{\varphi}(v)=2$, then \eqref{eq:no_RH} holds if there are (at least) two edges $e_1,e_2\in E_v(G_{\rm src})$ such that $\mu_{\varphi}(e_1)=\mu_{\varphi}(e_2)=2.$
    Let $e_3$ be a third non-leaf edge incident to $v$. By harmonicity, the index over $e_3$ is again 2 (or it is $1,$ but then there will be a fourth edge $e_4\in E_v(G_{\rm src})$ with $\varphi(e_3)=\varphi(e_4)$). We consider the first case since the second one can be treated analogously. 
    Assume by contradiction that $v$ is not a separating vertex. This means that the three edges $e'_1,e'_2,e'_3$ with common vertex $w$, such that $\varphi(e_i')=\varphi(e_i)$ and $\mu_{\varphi}(e_i')=1,$ are edges of $\Gamma$: there exist a connected subcurve $\Gamma_i$ connecting $e_i'$ and the endpoint of $e_i$, distinct from $v.$ Since $g\geq 3$ we may also assume $\Gamma_i$ has positive genus for some $i$.
    
    From Theorem \ref{th:main_general} $A.\Rightarrow B.$, the harmonic morphism defines a divisor $2v+w\in W_3^1(\Gamma).$ Depending on the edge lengths of $e_i,e_i'$ we have a linear equivalence $2v+w\sim 2v_i+y_i$ with $v_i\neq v$ the other endpoint of $e_i$ and $y_i\in e_i$, or $2v+w\sim v_i+w_i+z_i$ with $w_i$ the endpoint of $e_i'$ distinct from $w$ and $z_i\in e_i.$ 
    
    We start Dhar's burning algorithm from any $\Gamma_i$ one can check that the fire doesn't burn entirely the graph only if each $\Gamma_i$ is defined by multiple edges, as in Figure \ref{fg:g5}. In fact, if $\Gamma_i$ is not defined by multiple edges but contains any additional vertex $z$ of valence at least $3$, one could consider the linear equivalent divisor with support in $\Gamma_i\cap \overline{\Gamma\setminus\Gamma_i}$ and starting Dhar's burning algorithm from $z$ would burn the whole graph.

    \begin{figure}[ht]
\begin{tikzcd}
\begin{tikzpicture}
        \draw[thick,verde] (0,1)--(1,1);
        \draw[verde] (-1,0)--(1,0);
        \draw[] (-1,0) to [out=10, in=270] (0,1);
        \draw[] (-1,0) to [out=80, in=200] (0,1);

        \draw[thick, red] (1,1)--(1.5,1.75);
        \draw[red] (1,0)--(2.5,2);
        \draw[] (1.5,1.75) to [out=0, in=220] (2.5,2);
        \draw[] (1.5,1.75) to [out=30, in=180] (2.5,2);
        
        \draw[dashed,thick,blue](1,1)--(1.7,1);
        \draw[thick,blue] (1.7,1)--(2,1);
        \draw[blue] (1,0)--(3,0);
        \draw[] (2,1) to [out=280, in=170] (3,0);
        \draw[] (2,1) to [out=350, in=100] (3,0);
        \draw[] (2,1)--(3,0);
        \draw (1,1) node[anchor=north] {$v$};
        \draw (1,0) node[anchor=north] {$w$};
        \draw (0.5,1) node[anchor=south] {$2$};
        \draw (1.5,1) node[anchor=south] {$2$};
        \draw (1.2,1.5) node[anchor=east] {$2$};
        \vertex{1,1}
        \vertex{0,1}
        \vertex{2,1}
        \vertex{3,0}
        \vertex{-1,0} \vertex{1,0}\vertex{1.5,1.75}\vertex{2.5,2}
        \draw[->](3.5,0.5) to (4.5,0.5);
        \draw(4,0.5) node[anchor=south]{$\varphi$};
        \draw[verde] (5,0)--(8,0);
        \draw[red] (7,0)--(8.5,2);
        \draw[blue] (7,0)--(9,0);
        \vertex{9,0}
        \vertex{5,0} \vertex{7,0}\vertex{8.5,2}
    \end{tikzpicture}\end{tikzcd}\caption{A harmonic degree $3$ morphism from a graph of genus $5$ to a tree, which doesn't satisfy Riemann-Hurwitz inequality at $v$.}\label{fg:g5}
\end{figure}
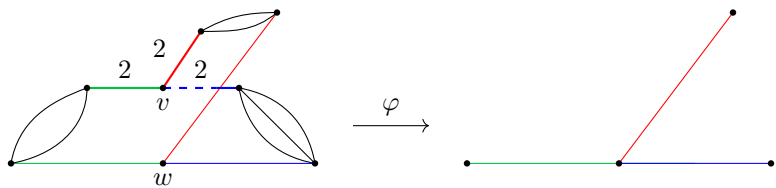
    
    \item If instead $m_{\varphi}(v)=3$ then this means that $v$ is the unique vertex in the preimage of a vertex in the tree, hence $v$ must be a separating vertex.
\end{itemize}
\end{proof}

Theorem \ref{th:main_general} and Proposition \ref{prp:adm_cover} then yield the following.
\begin{theo}
    Let $\Gamma$ be a metric graph whose canonical model contains at least four vertices and no multiple edges or separating vertices. Then the following are equivalent.
    \begin{itemize}
        \item[A.] $\Gamma$ is divisorially trigonal;
        \item[B.] $\Gamma$ is trigonal;
        \item[C.] $\Gamma$ admits a tropical admissible cover of degree $3$ to a metric tree.
    \end{itemize}
\end{theo}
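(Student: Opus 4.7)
The plan is to obtain the three-way equivalence as a direct corollary of the two main ingredients already established: Theorem \ref{th:main_general} handles the equivalence between A and B, and Proposition \ref{prp:adm_cover} upgrades a harmonic morphism to an admissible cover under the stated combinatorial hypotheses.

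First I would observe that the hypothesis on $\Gamma$ immediately excludes it from being a necklace: by Definition \ref{def:neck}, every necklace contains separating vertices (the points $v_i$ where the decomposing subgraphs meet the central cycle), but the canonical model of $\Gamma$ has no separating vertices. Since moreover $|V(G_{-})|\geq 4$, the dichotomy in Theorem \ref{th:main_general} reduces to the single option ``$\Gamma$ is trigonal'', and that theorem therefore yields the equivalence $A \Longleftrightarrow B$.

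Next I would dispatch the implication $C \Longrightarrow B$: by definition a tropical admissible cover of degree $3$ over a metric tree is, in particular, a non-degenerate harmonic morphism of degree $3$ from a tropical modification of $\Gamma$ to a metric tree, so $\Gamma$ is trigonal in the sense of Definition \ref{def:metric_gonal}.

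Finally I would prove $B \Longrightarrow C$ by contrapositive applied to Proposition \ref{prp:adm_cover}. Assuming trigonality, fix a tropical modification $\Gamma'$ and a non-degenerate harmonic morphism $\varphi : \Gamma' \to \Gamma_T$ of degree $3$ to a metric tree; by the footnoted \cite[Proposition 2.1]{MZ25} we may arrange $\varphi$ to have no contractions. If $\varphi$ were not a tropical admissible cover, Proposition \ref{prp:adm_cover} would force $\Gamma$ to contain either multiple edges or a separating vertex, contradicting the hypothesis. Hence the Riemann--Hurwitz inequality \eqref{eq:RH2} holds at every vertex, and as remarked after that display, wherever the inequality is strict we may attach leaf-edges with appropriate indices to restore equality \eqref{eq:RH}; the resulting morphism is the desired tropical admissible cover.

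No step presents a real obstacle here: the genuine content has already been absorbed into Theorem \ref{th:main_general} (for $A \Leftrightarrow B$) and into the case analysis of Proposition \ref{prp:adm_cover} (which shows that the only ways harmonicity can fail to give Riemann--Hurwitz in degree $3$ are through multiple edges or separating vertices). The only small point that requires care is that the conclusion of Proposition \ref{prp:adm_cover} concerns the underlying graph $\Gamma$ rather than the modification $\Gamma'$, so when invoking it we must translate the hypothesis ``no separating vertices and no multiple edges in the canonical model of $\Gamma$'' into its conclusion about $\Gamma$, which is immediate.
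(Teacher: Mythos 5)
Your argument is correct and is exactly the route the paper takes: the paper derives this theorem directly from Theorem \ref{th:main_general} (which applies since the absence of separating vertices rules out necklaces and $|V(G_{-})|\geq 4$ rules out the small-vertex alternative) together with Proposition \ref{prp:adm_cover} applied in contrapositive form, with leaf-edges added to turn the strict Riemann--Hurwitz inequality into the equality \eqref{eq:RH}. Your fleshing-out of the details, including the remark that the conclusion of Proposition \ref{prp:adm_cover} concerns $\Gamma$ itself, matches the paper's intent.
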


\section{$D$-hyperelliptic graphs}\label{sc:hyperelliptic}
We are considering the most general case, that is any graph, not necessarily $3$-edge connected.
From \cite[Lemma 5.3]{BN}, \cite[Lemma 3.1]{MZ25}, any $3$-edge connected metric graph cannot be hyperelliptic. Since we are now dropping the assumption on $3$-edge connectivity, we may now consider also hyperelliptic graphs, for which the equivalence between the divisor of degree $2$ and rank $1$ and the non-degenerate harmonic morphism of degree $2$ to a metric tree has been proved in \cite{MC}.

Clearly hyperelliptic graphs are divisorially trigonal and we will now prove that they are also trigonal.

\begin{prop}\label{prp:hyp_trig}
Let $\Gamma$ be a hyperelliptic metric graph with canonical loopless model $(G_{-},l_{-})$ such that $|V(G_{-})|\geq3.$ 
Then $\Gamma$ is divisorially trigonal and trigonal.
\end{prop}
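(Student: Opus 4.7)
The proof splits naturally into showing divisorial trigonality (which is almost immediate from hyperellipticity) and constructing the harmonic morphism witnessing trigonality. My plan for the latter is to upgrade the degree $2$ harmonic morphism provided by the hyperelliptic structure into a degree $3$ morphism by attaching a ``section'' tree to the source.

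For divisorial trigonality, I would fix any $H \in W_2^1(\Gamma)$ and any point $p \in \Gamma$, and set $D := H + p$. For every $q \in \Gamma$, the rank-$1$ property of $H$ provides an effective divisor $E_q$ with $H - q \sim E_q$, so $D - q \sim E_q + p \geq 0$. Thus $\mathrm{rk}(D) \geq 1$ and $D \in W_3^1(\Gamma)$, giving divisorial trigonality.

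For trigonality, I would first invoke Chan's theorem \cite{MC}: hyperellipticity of $\Gamma$ produces a non-degenerate harmonic morphism $\varphi_H : \Gamma_1 \to \Gamma_T$ of degree $2$ from a tropical modification $\Gamma_1$ of $\Gamma$ to a metric tree $\Gamma_T$. I then pick any point $p_0 \in \Gamma_T$ together with a preimage $q \in \varphi_H^{-1}(p_0) \subset \Gamma_1$, and let $\Gamma'$ be the metric graph obtained from $\Gamma_1$ by gluing an isometric copy of $\Gamma_T$ along the identification $p_0 \sim q$. Since $\Gamma_T$ is a tree, $\Gamma'$ is still a tropical modification of $\Gamma$. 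The candidate morphism $\varphi' : \Gamma' \to \Gamma_T$ is defined to agree with $\varphi_H$ on $\Gamma_1$ and with the canonical isometry on the attached copy of $\Gamma_T$; it is well defined at $q = p_0$ because $\varphi_H(q) = p_0$.

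It then remains to verify that $\varphi'$ is a non-degenerate harmonic morphism of degree $3$. The degree is $\deg(\varphi_H) + 1 = 3$ because the isometry from the attached copy contributes index $1$ on every edge of $\Gamma_T$. Harmonicity at any vertex of $\Gamma_1 \setminus \{q\}$, or in the interior of the attached copy, is inherited from $\varphi_H$ and from the identity respectively. The main point to check, which I expect to be the only delicate step, is harmonicity at the gluing vertex $q = p_0$: for each edge $e$ of $\Gamma_T$ incident to $p_0$, the total incoming index at $q$ in $\Gamma'$ equals $m_{\varphi_H}(q) + 1$, which is independent of $e$ precisely because $\varphi_H$ is harmonic at $q$. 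Non-degeneracy is immediate from that of $\varphi_H$ and of the isometry on the attached copy, so $\Gamma$ is trigonal.
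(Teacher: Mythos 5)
Your proof is correct and follows essentially the same route as the paper: $D=H+p$ for divisorial trigonality, and for trigonality an appeal to Chan's theorem (which is where the hypothesis $|V(G_{-})|\geq 3$ is actually used) followed by gluing an isometric copy of the target tree onto the source to raise the degree from $2$ to $3$. The only cosmetic difference is that the paper attaches the copy at a preimage of a \emph{leaf} of $T$, which makes harmonicity at the gluing point vacuous, whereas you attach at an arbitrary point and check it directly via the count $m_{\varphi_H}(q)+1$; both verifications are valid.
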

\begin{proof}
Let $H\in W_2^1(\Gamma)$ and define $D:=H+p$ with $p$ any point in $\Gamma.$ Clearly $\operatorname{deg}D=3$ and for any $E\in \operatorname{Div}(\Gamma);$ $E\geq 0$ and $\operatorname{deg}E=1,$ we have $D-E=H+p-E\sim E'+p$ with $E'$ effective since $H$ has rank $1$,
which yields that $D\in W_3^1(\Gamma)$, 
hence $\Gamma$ is divisorially trigonal.

We now show that there is tropical modification of $\Gamma,$ which admits a non-degenerate degree $3$ harmonic morphism to a tree $T$.

By \cite[Theorem 3.12]{MC}, there esist a non-degenerate harmonic morphism $\psi$ of degree $2$ from the canonical loopless model $(G_{-},l_{-})$ of $\Gamma$ to $(T,l_T)$ with $T$ a tree.

Let $t$ be any leaf of the tree and take any vertex $x \in \psi^{-1}(t).$ Define $(G,l)$ such that $G$ is obtained by attaching to $x$ the tree $T$ with edges of the same length as in $l_T,$ as shown in Figure \ref{fg:hyp}. 

\begin{figure}[ht]
\begin{tikzcd}
\begin{tikzpicture}
\draw (-0.2,0.2) node[anchor=east] {$G_{-}$};
    \draw(0,0)[red] to [out=90, in=150] (1,0);
    \draw(0,0)[red] to [out=270, in=210] (1,0);
    \vertex{0,0}
    \draw (1.25,0) node[anchor=south] {$2$};
    \vertex{1,0}
    \draw[thick,blue](1,0)--(1.5,0);
    \draw[verde](1.5,0)--(2.5,0.5);
    \draw[verde](1.5,0)--(2.5,-0.5);
    \vertex{1.5,0}\vertex{2.5,0.5}\vertex{2.5,-0.5}
    \draw(2.5,0.5) to [out=300, in=60] (2.5,-0.5);
    \draw(2.5,0.5) to [out=250, in=110] (2.5,-0.5);
    \end{tikzpicture}\arrow[d,"\psi"] &\begin{tikzpicture}
\draw (-0.2,0.2) node[anchor=east] {$G$};
    \draw(0,0)[red] to [out=90, in=150] (1,0);
    \draw(0,0)[red] to [out=270, in=210] (1,0);
    \vertex{0,0}
    \draw (1.25,0) node[anchor=south] {$2$};
    \vertex{1,0}
    \draw[thick,blue](1,0)--(1.5,0);
    \draw[verde](1.5,0)--(2.5,0.5);
    \draw[verde](1.5,0)--(2.5,-0.5);
    \vertex{1.5,0}\vertex{2.5,0.5}\vertex{2.5,-0.5}
    \draw(2.5,0.5) to [out=300, in=60] (2.5,-0.5);
    \draw(2.5,0.5) to [out=250, in=110] (2.5,-0.5);
    \draw[red](-0.2,0.5) -- (0.8, 0.5);
    \draw[blue](0.8,0.5) -- (1.3, 0.5);
    \draw[verde](1.3,0.5) -- (2.5, 0.5);
    \foreach \i in {-0.2,0.8,1.3,2.5} {
    	    \vertex{\i, 0.5}
    	    }  
    \end{tikzpicture}\arrow[d,"\varphi"]\\
    \begin{tikzpicture}
    \draw (-0.2,0.2) node[anchor=east] {$T$};
    \draw[red](0,0) -- (1, 0);
    \draw[blue](1,0) -- (1.5, 0);
    \draw[verde](1.5,0) -- (2.7, 0);
    \foreach \i in {0,1,1.5,2.7} {
    	    \vertex{\i, 0}
    	    }  
    \end{tikzpicture}&\begin{tikzpicture}
    \draw (-0.2,0.2) node[anchor=east] {$T$};
    \draw[red](0,0) -- (1, 0);
    \draw[blue](1,0) -- (1.5, 0);
    \draw[verde](1.5,0) -- (2.7, 0);
    \foreach \i in {0,1,1.5,2.7} {
    	    \vertex{\i, 0}
    	    }  
    \end{tikzpicture}\end{tikzcd}
    \caption{Non-degenerate harmonic morphisms of degree $2$ and $3$ from tropically equivalent metric graphs to the same tree.}\label{fg:hyp}
\end{figure}
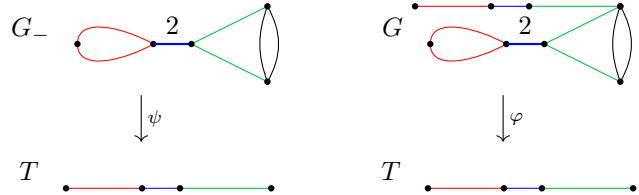

Clearly $(G,l)$ is a tropical modification of $(G_{-},l_{-}).$ Moreover, we extend $\psi$ to $\varphi$ such that $\varphi|_{(T,l_T)}$ is the identity.

No additional contraction has been defined, therefore $\varphi$ is non-degenerate. Moreover $\varphi$ is harmonic when restricted to both $(G_{-},l_{-})$ and $(T,l_T)$ and over their intersection, which by construction is a vertex that is sent to a leaf. 
Therefore the resulting morphism is again harmonic with degree increased by $1.$
\end{proof}

This agrees with the equivalence between the existence of a divisor of degree $3,$ rank $1$ on a metric graph $\Gamma$ and that of a non-degenerate harmonic morphism of degree $3$ from a a tropical modification of $\Gamma,$ to $(T,l_T)$ with $T$ a tree, which we have proved to hold in the $3$-edge connected case.

The difficult part, as in the previous cases, will be to define the morphism, starting from a divisor. We have already proved that this is true for hyperelliptic graphs, in Proposition \ref{prp:hyp_trig}. Therefore we will now consider non-hyperelliptic graphs.

Let us start by noticing that in this case, unlike the $3$-edge connected case, the morphism that we will construct, if it exists, does not have necessarily index $1$ over all non-contracted edges. An example is given in Figure \ref{fig:2_edge}.

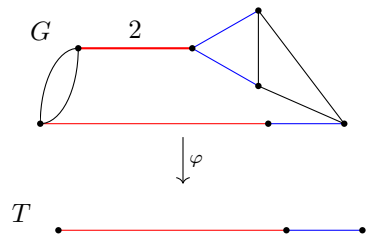
\begin{figure}[ht]
\begin{tikzcd}
\begin{tikzpicture}
        \draw (0.3,1.2) node[anchor=east] {$G$};
        \path[draw][thick,red] (0.5,1) -- (2, 1);
        \draw (1.25,1) node[anchor=south] {$2$};
        \path[draw][red] (0,0) -- (3,0);
    	\begin{scope}
		\clip (0,0) rectangle ++(0.5,1);
		\draw (0,1) ellipse (0.5 and 1);
        \draw (0.5,0) ellipse (0.5 and 1);
	  \end{scope}
   \vertex{0,0}
   \vertex{0.5,1}
        \path[draw][blue] (2,1) -- (2.87,1.5);
        \path[draw][blue] (2,1) -- (2.87,0.5);
        \path[draw][blue] (3,0) -- (4,0);
        \vertex{3,0}\vertex{4,0}
        \vertex{2,1}\vertex{2.87,1.5}\vertex{2.87,0.5}
        \draw (2.87,1.5) -- (2.87,0.5);
        \draw (2.87,0.5) -- (4,0);
        \draw (2.87,1.5) -- (4,0);
        \end{tikzpicture}\arrow[d,"\varphi"]\\
    \begin{tikzpicture}
    \draw (-0.2,0.2) node[anchor=east] {$T$};
    \draw[red](0,0) -- (3, 0);
    \draw[blue](3,0) -- (4, 0);
    \vertex{0,0}
    \vertex{3,0}
    \vertex{4,0}   
    \end{tikzpicture}\end{tikzcd}\caption{A non-degenerate harmonic morphism of degree $3$.}\label{fig:2_edge}
\end{figure}

Let us also consider a further example, depicted in Figure \ref{fig:hyp_block}.
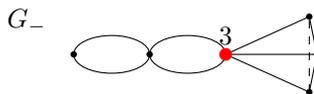
\begin{figure}[ht]
\begin{tikzcd}
\begin{tikzpicture}
        \draw (-0.7,0.45) node[anchor=east] {$G_{-}$};
        \draw (0,0) ellipse (0.5 and 0.25);
        \vertex{-0.5,0}\vertex{0.5,0}
    	\begin{scope}
		\clip (0.5,0) rectangle ++(1,0.5);
		\draw (1,0) ellipse (0.5 and 0.25);
	  \end{scope}
   \begin{scope}
		\clip (0.5,-0.5) rectangle ++(1,0.5);
		\draw (1,0) ellipse (0.5 and 0.25);
	  \end{scope}
   
        \path[draw][] (1.5,0) -- (2.7,0);
        \path[draw][] (1.5,0) -- (2.6,0.5);
        \path[draw][] (1.5,0) -- (2.6,-0.5);
        \vertex{2.7,0}\vertex{2.6,0.5}
        \vertex{2.6,-0.5}
        \draw[dashed] (2.6,-0.5) -- (2.6,0.5);
        \draw (2.6,-0.5) -- (2.7,0);
        \draw (2.6,0.5) -- (2.7,0);
        \divisor{1.5,0}
   \draw (1.5,0) node[anchor=south] {$3$};\end{tikzpicture}\end{tikzcd}\caption{A divisorially trigonal graph with no harmonic morphism of degree tree from its canonical loopless model $(G_{-},l_{-})$ to a tree.}\label{fig:hyp_block}
\end{figure}

Here, the metric graph is divisorially trigonal but there is no harmonic morphism of degree $3$ from its canonical loopless model to a tree. We observe however that its canonical loopless model contains a subgraph which is hyperelliptic (namely the two cycles on the left of the vertex supporting the divisor).
As in Proposition \ref{prp:hyp_trig}, one could define instead a harmonic morphism of degree $3$ from a tropical  modification of the canonical model, to a tree, as shown in Figure \ref{fig:hyp_block2}. 
\begin{figure}[ht]
\begin{tikzcd}
\begin{tikzpicture}
        \draw (-0.7,0.45) node[anchor=east] {$G$};
        \draw[red] (0,0) ellipse (0.5 and 0.25);
        \vertex{-0.5,0}
        \vertex{0.5,0}
    	\begin{scope}
		\clip (0.5,0) rectangle ++(1,0.5);
		\draw[blue] (1,0) ellipse (0.5 and 0.25);
	  \end{scope}
   \begin{scope}
		\clip (0.5,-0.5) rectangle ++(1,0.5);
		\draw[blue] (1,0) ellipse (0.5 and 0.25);
	  \end{scope}
   
        \path[draw][verde] (1.5,0) -- (2.7,0);
        \path[draw][verde] (1.5,0) -- (2.6,0.5);
        \path[draw][verde] (1.5,0) -- (2.6,-0.5);
        \path[draw][blue] (1.5,0) -- (0.6,0.5);
        \path[draw][red] (-0.3,1) -- (0.6,0.5);
        \vertex{0.6,0.5}\vertex{-0.3,1}
        \vertex{2.7,0}\vertex{2.6,0.5}
        \vertex{2.6,-0.5}
        \draw[dashed] (2.6,-0.5) -- (2.6,0.5);
        \draw (2.6,-0.5) -- (2.7,0);
        \draw (2.6,0.5) -- (2.7,0);
        \vertex{1.5,0}
\end{tikzpicture}\arrow[d]\\
    \begin{tikzpicture}
    \draw (-0.7,0.2) node[anchor=east] {$T$};
    \draw[red](-0.5,0) -- (0.5, 0);
    \draw[blue](0.5,0) -- (1.5, 0);
    \draw[verde](1.5,0) -- (2.7, 0);
    \vertex{-0.5,0}
    \vertex{0.5,0}
    \vertex{1.5,0}   
    \vertex{2.7,0}   
    \end{tikzpicture}\end{tikzcd}\caption{A non-degenerate harmonic morphism of degree $3$ from $(G,l)$ to $(T,l_T),$ with $T$ a tree and $G$ tropically equivalent to the graph in Figure \ref{fig:hyp_block}.}\label{fig:hyp_block2}
\end{figure}
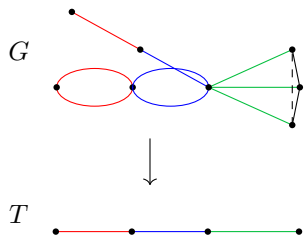
Such a morphism, restricted to the hyperelliptic part, is precisely the one determined by \cite[Theorem 3.12]{MC}.
Therefore we will need to take into account the presence of hyperelliptic subgraphs. 

We therefore now treat such hyperelliptic subgraphs, if there are any, and over such graphs a suitable morphism will be defined similarly as in Proposition \ref{prp:hyp_trig}. Then we will define the morphism over their complements in Subsection \ref{sc:non_hyp} and then finally prove that, under certain assumptions, the gluing of such morphisms still yields a non-degenerate harmonic morphism of degree $3,$ with a tree or a tree of triangles as target space.

Let us also recall that, from \cite[Corollaries 5.10,5.11]{BN}, the contraction of bridges does not changes the rank of a divisor. Therefore, given $D\in W_3^1(\Gamma)$, we will assume first $\Gamma$ to be $2$-edge connected and then extend later the construction on bridges.

From now on, we will assume $\Gamma$ non-hyperelliptic, $2$-edge connected, with canonical loopless model  $(G_{-},l_{-})$ such that $|V(G_{-})|>3.$

Given $D\in W_3^1(\Gamma),$ we want to explicitly describe a hyperelliptic subgraph $\Gamma_1\subset\Gamma$, if it exists, satisfying certain conditions.

\begin{defin}\label{def:D_hyp}
    Let $\Gamma$ be a metric graph with $D\in W_3^1(\Gamma).$ 
    A connected subcurve $\Gamma_1\subset\Gamma,$ with $|V(G_1)|\geq2,$ where $\Gamma_1=(G_1,l_1)$ with $G_1$ isomorphic to a subgraph of $G_{-},$ is said to be a $D$-\textbf{hyperelliptic half} of $\Gamma$ if there exists {$p\in\overline{\Gamma\setminus\Gamma_1}$} such that $D-p\sim H\in W_2^1(\Gamma_1)$ where
    \begin{enumerate}
        \item $H\sim_{\Gamma} x+y$ with $x\in\mathring{e_1},y\in\mathring {e_2}$ and $e_1,e_2\in E(G_1);$ $e_1\neq e_2$, where $\sim_{\Gamma}$ denotes linear equivalence in $\Gamma,$ and
        \item $\Gamma_1=\{\operatorname{Supp}(H')|H'\geq 0 \text{ and }H'\sim_{\Gamma} H\}$.
    \end{enumerate}
\end{defin} 

The following examples in Figures \ref{fg:D_half}, \ref{fg:D_half2} motivate the above definition and in particular the conditions (1) and (2). We provide examples of subcurves $\Gamma_1\subset\Gamma$ which do not satisfy both conditions in Definition \ref{def:D_hyp}. In fact, the construction that will be made in this section will not yield a harmonic morphism of degree $3$ for such graphs.

\begin{figure}[ht]
\centering
\begin{tikzcd}
\begin{tikzpicture}
    \draw (-0.2,1.2) node[anchor=east] {$\Gamma$};
    \draw[](0,0) --  (2,0);
    \draw[](0,1) --  (2,1);
    \draw[](0,-1) --  (2,-1);
    \draw[blue](0,0)--(0,1);\draw(0,0)--(0,-1);
    \draw(2,0)--(2,1);\draw(2,0)--(2,-1);
    \draw[] (0,-1) to [out=120, in=240] (0,0);
    \draw[blue] (0,0) to [out=120, in=240] (0,1);
    \draw[] (2,0) to [out=60, in=300]  (2,1);
    \draw[] (2,-1) to [out=60, in=300] (2,0);
    \foreach \i in {0,2} {
        \foreach \j in {0,1,-1}{
            \vertex{\i,\j}}
    }  
    \divisorBig{0,0}\divisorBig{0,1}
    \subgr{0,0}\subgr{0,1}\divisor{0,-1}
    \draw (0,-1) node[anchor=north] {$p$};
    \end{tikzpicture}
    \end{tikzcd}\caption{A metric subgraph $\Gamma_1$ (in blue) of $\Gamma,$ with $D\in W_3^1(\Gamma)$. Here $D-p=H\in W_2^1(\Gamma_1)$ and $\Gamma_1=\{\operatorname{Supp}(H')|H'\geq 0 \text{ and }H'\sim_{\Gamma} H\}$ but condition (1) in Definition \ref{def:D_hyp} is not satisfied.}\label{fg:D_half}
\end{figure}
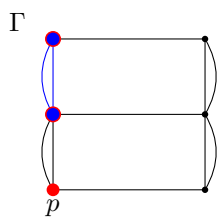

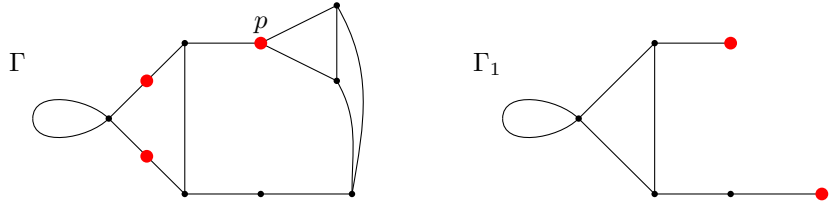
\begin{figure}[ht]
\centering
\begin{tikzcd}
    \begin{tikzpicture}
    \draw (-1.2,1) node[anchor=north] {$\Gamma$};
    \draw[](0,0) --  (1,1);
    \draw[](0,0) --  (1,-1);
    \draw[] (-1,0) to [out=90, in=135] (0,0);
    \draw[] (-1,0) to [out=270, in=225] (0,0);
    \draw(1,1) -- (1,-1);
    \vertex{0,0}
    \divisor{0.5,0.5}
    \divisor{0.5,-0.5}
    \draw (1,1)--(2,1);
    \draw (1,-1)--(2,-1);
    \foreach \i in {1,2} {
        \foreach \j in {1,-1}{
            \vertex{\i,\j}}
    }  
\draw (2,1)--(3,1.5);
\draw (2,1)--(3,0.5);
\draw (2,-1)--(3.2,-1);
\vertex{3,1.5}
\vertex{3,0.5}
\vertex{3.2,-1}
\divisor{2,1}
\draw (2,1) node[anchor=south] {$p$};
\draw(3,1.5)--(3,0.5);
\draw(3,0.5)to [out=300, in=90](3.2,-1);
\draw(3,1.5) to [out=300,in=80] (3.2,-1);
    \end{tikzpicture}&
        \begin{tikzpicture}
        \draw (-1.2,1) node[anchor=north] {$\Gamma_1$};
    \draw[](0,0) --  (1,1);
    \draw[](0,0) --  (1,-1);
    \draw[] (-1,0) to [out=90, in=135] (0,0);
    \draw[] (-1,0) to [out=270, in=225] (0,0);
    \draw(1,1) -- (1,-1);
    \vertex{0,0}
    
    \draw (1,1)--(2,1);
    \draw (1,-1)--(2,-1);
    \foreach \i in {1,2} {
        \foreach \j in {1,-1}{
            \vertex{\i,\j}}
    }  
\draw (2,-1)--(3.2,-1);
\vertex{3.2,-1}\divisor{3.2,-1}
    \divisor{2,1}
    \end{tikzpicture}
\end{tikzcd} \caption{A connected metric graph $\Gamma_1\subset \Gamma$ with $D\in W_3^1(\Gamma)$ and $D-p=H\in W_2^1(\Gamma_1)$. Here $\Gamma_1=\{\operatorname{Supp}(H')|H'\geq 0 \text{ and }H'\sim_{\Gamma_1} H\}$ but $\Gamma_1\supsetneq\{ \operatorname{Supp}(H)|H'\geq 0 \text{ and }H'\sim_{\Gamma} H\},$ hence condition (2) in Definition \ref{def:D_hyp} is not satisfied.}\label{fg:D_half2}
\end{figure}

\begin{rk}\label{rk:hyp_loops}
 With the above definition, we will also consider, as $D$-hyperelliptic halves, portion of loops.
 For instance, consider the example represented in Figure \ref{fg:loop_hyp}. The subgraph $\Gamma_1$ does satisfy the conditions in Definition \ref{def:D_hyp}, hence it is a $D$-hyperelliptic half. Moreover, notice that, because on the condition on the vertices, any $\Gamma_1$ of this type consists of an edge whose length is bigger or equal than half of that of the entire loop.  
     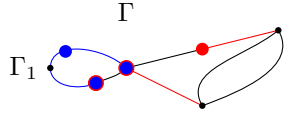
\begin{figure}[ht]
\centering
\begin{tikzcd}
    \begin{tikzpicture}
    \draw(0,0) --  (1,0.25);
    \draw[red](2,0.5) --  (1,0.25);
    \draw[red](0,0) --  (1,-0.5);
    \draw[blue] (-1,0) to [out=90, in=135] (0,0);
    \draw[blue] (-1,0) to [out=270, in=205] (-0.4,-0.2);
    \subgr{-0.8,0.22}
    \draw[] (-0.4,-0.2) to [out=25, in=225] (0,0);
    \draw (0,1) node[anchor=north] {$\Gamma$};
    \draw (-1,0) node[anchor=east] {$\Gamma_1$};
    \divisorBig{0,0}
    \divisor{1,0.25}
    \divisorBig{-0.4,-0.2}
    \draw(2,0.5)to [out=300, in=20](1,-0.5);
    \draw(2,0.5) to [out=220,in=120] (1,-0.5);
    \subgr{0,0}\subgr{-0.4,-0.2}
    \vertex{-1,0}
    \vertex{2,0.5}\vertex{1,-0.5}
    \end{tikzpicture}
\end{tikzcd} \caption{The metric graph $\Gamma$ is such that the red edges have same length, and the red points define a divisor $D$ of degree $3$ and rank $1$.}\label{fg:loop_hyp}
\end{figure}
\end{rk}

Notice also that we are assuming $\Gamma$ non-hyperelliptic, therefore the complement of any $D$-hyperellptic half cannot be empty.

Let us also add few results on the properties of $D$-hyperelliptic halves.

\begin{rk}
    \label{rk:2cut}
    Let $\Gamma_1=(G_1,l_1)$ be a $D$-hyperellptic half, which does not consists only of a portion of loop. Then condition (1) in Definiton \ref{def:D_hyp} can be replaced with the following:
    \begin{itemize}
    \item[(1a)] $H\sim_{\Gamma} x+y$ with $x\in\mathring{e_1},y\in\mathring {e_2}$ and $\{e_1,e_2\}$ is a $2$-edge cut of $\Gamma.$
    \end{itemize}
    
    Indeed, by condition (1) in Definiton \ref{def:D_hyp} we have $D\sim p+x+y,$ with $x,y$ contained in the interior of two distinct edges $e_1,e_2$ in $\Gamma_1.$ Using Dhar's burning algorithm it is easy to see that such two edges form a $2$-edge cut of $G_1.$

    If $\Gamma_1$ doesn't consist only of a portion of a loop, then $G_1$ contains more edges than the set $\{e_1,e_2\}.$ Let $e\in E(G_1)$ with $e\neq e_1,e_2$ and pick $y'\in e.$ Then $x+y\sim_{\Gamma}x'+y'$ for some $x'\in \Gamma_1,$ by condition (2) in Definition \ref{def:D_hyp}. Then, by \cite[Lemma 3.1. (1)]{MC} the edges containing $x,y$ must form a $2$-edge cut of $\Gamma.$
\end{rk}

Any $D$-hyperelliptic half $\Gamma_1$ is clearly hyperelliptic, and thus admits a non-degenerate degree $2$ harmonic morphism $\varphi$ to a tree $T_1.$ 
If $\Gamma_1$ is strictly contained in a loop as in Remark \ref{rk:hyp_loops} the morphism can be constructed by adding a vertex at the midpoint of $\Gamma_1$ (and eventually removing that at the midpoint of the entire loop) and identifying the two halves obtained by this refinement.

We attach a copy of $T_1$ at $p,$ where $p\in\overline{\Gamma\setminus\Gamma_1}$ such that $D\sim H+p$ with $H\in W_2^1(\Gamma_1)$. The resulting graph $\Gamma'$ is clearly tropically equivalent to $\Gamma$ and we define the morphism over $\Gamma_1':=\Gamma_1\cup T_1$ by sending each edge of the tree to its copy in the target space, with index 1. 
Unlike the construction in Proposition \ref{prp:hyp_trig}, the added tree cannot be attached to any preimage of a leaf, but its position will be determined by $p.$

Repeating such a construction for any $D$-hyperelliptic half $\Gamma_i\subset \Gamma$ yields a morphism from the union of $D$-hyperelliptic halves with the corresponding trees $\bigcup_i \Gamma_i'$ with $\Gamma_i':=\Gamma_i\cup T_i,$ that we will denote by $\varphi_D^\text{hyp}$.
Such a morphism, however might not even be well defined. We will now prove that if all $\Gamma_i$ satisfy the following properties, then $\varphi_D^\text{hyp}$, is a non-degenerate degree $3$ harmonic morphism to (a disjoint union) of trees.

\begin{itemize}
    \item[(H1)] $\Gamma_i\cap\overline{\Gamma\setminus\Gamma_i}=\{x_0,y_0\}$ where $D-p\sim H$ as in Definition \ref{def:D_hyp}, with $H\sim x_0+y_0,$ (where $x_0$ and $y_0$ might be equal).
    \item[(H2)] If $\Gamma_i\neq\Gamma_j$, then $|\Gamma_i\cap\Gamma_j|\leq 1.$ 
\end{itemize}

We will later prove in subsection \ref{ssc:hyp_necklace} that if $\Gamma$ is not a necklace, then (H1) and (H2) hold for any $D$-hyperelliptic half $\Gamma_i$.

\begin{prop}\label{prp:morph_hyp}
    Let $\Gamma$ be a $2$-edge connected non-hyperelliptic metric graph with $D\in W_3^1(\Gamma)$ and $D$-hyperelliptic halves $\Gamma_1,\Gamma_2$ satisfying \emph{(H1)}, \emph{(H2)}. Let $\varphi_D^\text{\emph{hyp}}$ be the morphism constructed above, then $\varphi_D^\text{hyp}|_{\Gamma_1'\cup\Gamma_2'}$ is non-degenerate harmonic morphism of degree $3$ to a tree (or a disjoint union of trees).
\end{prop}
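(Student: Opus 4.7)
The plan is to prove the proposition by first establishing the claim on each $\Gamma_i'$ individually, following the blueprint of Proposition \ref{prp:hyp_trig}, and then verifying that the gluing dictated by (H2) preserves non-degeneracy, harmonicity, degree $3$, and the tree structure of the target. The role of (H1) is to pin down the possible boundary points of $\Gamma_i$ in $\Gamma$ and tie them to the fiber $H \sim x_0 + y_0$ of the hyperelliptic morphism $\varphi_i \colon \Gamma_i \to T_i$; the role of (H2) is to bound $\Gamma_1 \cap \Gamma_2$ to at most a single point, simplifying the gluing.

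First, I would set up the local degree $3$ morphism on each $\Gamma_i'$ separately. Since $\Gamma_i$ is hyperelliptic by Definition \ref{def:D_hyp}, by \cite[Theorem 3.12]{MC} there is a non-degenerate degree $2$ harmonic morphism $\varphi_i \colon \Gamma_i \to T_i$ to a tree, and since $x_0 + y_0 \sim H$ is an effective divisor equivalent to $H$ and hence a fiber of $\varphi_i$, we have $\varphi_i(x_0) = \varphi_i(y_0)$. Attaching a copy of $T_i$ at $p_i$ and mapping it identically to $T_i$ with index $1$ on each edge (exactly as in the construction preceding Proposition \ref{prp:hyp_trig}) produces a candidate degree $3$ morphism $\varphi_D^{\text{hyp}}|_{\Gamma_i'}$. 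Non-degeneracy is immediate, since $\varphi_i$ is non-degenerate and the identity introduces no contractions. Harmonicity away from $p_i$ is inherited from $\varphi_i$ on $\Gamma_i$ and from the identity on the added tree; at $p_i$ it is verified as in Proposition \ref{prp:hyp_trig}, distinguishing whether $p_i \notin \Gamma_i$ (so $p_i$ is only incident to the attached tree) or $p_i \in \{x_0, y_0\}$ by (H1), where the local balance is handled using $\varphi_i(x_0) = \varphi_i(y_0)$.

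Next, I would glue the two morphisms using (H2). If $\Gamma_1 \cap \Gamma_2 = \emptyset$, then $\varphi_D^{\text{hyp}}|_{\Gamma_1' \cup \Gamma_2'}$ targets the disjoint union $T_1 \sqcup T_2$, a disjoint union of trees, and all properties hold componentwise. If instead $\Gamma_1 \cap \Gamma_2 = \{v\}$, then applying (H1) to both halves forces $v$ to lie in $\{x_0^{(1)}, y_0^{(1)}\} \cap \{x_0^{(2)}, y_0^{(2)}\}$; I then glue $T_1$ and $T_2$ by identifying $\varphi_1(v)$ with $\varphi_2(v)$ to obtain a single tree $T_1 \cup_v T_2$. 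Harmonicity at $v$ reduces to harmonicity on each $\Gamma_i'$: every edge of $T_i$ emanating from $\varphi_i(v)$ receives multiplicity only from edges of $\Gamma_i'$ incident to $v$, so the balance condition at $v$ on $\Gamma_1' \cup \Gamma_2'$ is exactly the one verified on each $\Gamma_i'$ in the previous step. Degree $3$ over each component and non-degeneracy are inherited similarly.

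The main obstacle will be the harmonicity check at the attachment points $p_i$ and at the shared intersection point $v$, which requires a careful accounting of the indices. The key input is the equality $\varphi_i(x_0) = \varphi_i(y_0)$, ensuring that the added copy of $T_i$ rooted at $p_i$ does not conflict with the directions already present in $\varphi_i(\Gamma_i)$. Without (H1) and (H2), additional edges of $\Gamma$ exterior to $\Gamma_i$ at $x_0$ or $y_0$ could contribute directions incompatible with the attached tree, breaking harmonicity; this is precisely the obstruction that produces the necklace exception treated in Subsection \ref{ssc:hyp_necklace}.
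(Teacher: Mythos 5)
Your setup and gluing framework match the paper's, but there is a genuine gap at the heart of the argument: the harmonicity check at the intersection point $v=\Gamma_1\cap\Gamma_2$. You claim that ``the balance condition at $v$ on $\Gamma_1'\cup\Gamma_2'$ is exactly the one verified on each $\Gamma_i'$,'' because each edge of $T_i$ at $\varphi_i(v)$ only receives multiplicity from edges of $\Gamma_i'$. That observation is correct but insufficient. After gluing, the target vertex $t=\varphi_1(v)=\varphi_2(v)$ has incident edges coming from \emph{both} $T_1$ and $T_2$, and harmonicity at $v$ requires the sum $\sum_{e\in E_v,\ \varphi(e)=e'}\mu_\varphi(e)$ to be the \emph{same} constant for every $e'\in E_t(T_1\cup_t T_2)$. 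Harmonicity of each $\varphi_D^{\text{hyp}}|_{\Gamma_i'}$ only tells you that this sum is constant within the $T_1$-directions (equal to the local degree $D_1(v)$, where $D_1=v+y_1+p_1$ is the fiber over $t$ from the $\Gamma_1$ side) and, separately, constant within the $T_2$-directions (equal to $D_2(v)$). Nothing in your argument shows $D_1(v)=D_2(v)$, and this equality is not automatic: a priori $v$ could be a ramification point of the hyperelliptic structure on $\Gamma_1$ (local degree $2$) but not on $\Gamma_2$ (local degree $1$).

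This is exactly the nontrivial content of the paper's proof. There, one writes $D\sim D_1\sim D_2$ with $D_i=x+y_i+p_i$ and $x+y_i\sim H_i\in W_2^1(\Gamma_i)$ using (H1); if $y_i,p_i\neq x$ for both $i$ the local degrees are both $1$ and one is done, while if $D_1(x)\geq 2$ one runs Dhar's burning algorithm from a point of $\Gamma\setminus(\Gamma_1\cup\Gamma_2)$ or of $\Gamma_1$ (depending on where $p_2$ lies) to show that the whole graph would burn unless $D_2(x)\geq 2$ as well, and then uses $2$-edge connectivity to conclude $D_1(x)=D_2(x)$. Your proposal never invokes Dhar's algorithm or $2$-edge connectivity at this step, so the matching of the two local degrees is missing; without it the glued map need not be harmonic. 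The rest of your argument (well-definedness via (H2), non-degeneracy, the target being a tree or disjoint union of trees, and the componentwise degree count) is in line with the paper.
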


\begin{proof}
First of all, we observe that the morphism is well defined: by (H2), the subgraphs $\Gamma_1$ and $\Gamma_2$ intersect at one point at most, therefore the morphisms over each $D$-hyperelliptic half does not affect the edges contained in the other.

Notice also that the gluing of two trees over a point is still a tree and that, when gluing the two morphisms, no contraction is added, therefore the morphism is still non-degenerate and we only need to verify the harmonicity.
By construction, $\varphi_D^\text{hyp}|_{\Gamma_i'}$ is harmonic for $i=1,2.$ Therefore we only have to prove that the morphism is harmonic over the intersection, which consists of (at most) a unique point $\{x\},$ again by property (H2).

Let $G$ be the graph isomorphic to a subgraph of $G_{-}$ such that $\Gamma_1'\cup\Gamma_2'=(G,l_G).$ If $x\in \mathring e;$ $e\in E(G),$ then there is nothing to prove: we have $\mu_{\varphi_D^\text{hyp}}(e)=1$ (recall that $\Gamma_i$ are $2$-edge connected) for any $e$ such that $\varphi_D^\text{hyp}(e)\in E_{{\varphi_D^\text{hyp}}(x)}(T)$. Therefore let us assume $x\in V(G).$ Let $t:=\varphi_D^\text{hyp}(x),$ we need to show that the quantity $$\sum_{\substack{e\in E_x(G)\\ \varphi_D^\text{hyp}(e)=e'}} \mu_{\varphi_D^\text{hyp}}(e)$$ is constant for any $e'\in E_t(T).$

Since $\mu_{\varphi_D^\text{hyp}}(e)=1$ for any $e$ such that $\varphi_D^\text{hyp}(e)\in E_t(T),$ the above quantity only depends on $D'(x)$ for some $D'\sim D.$ 
By (H1), we have that $D\sim D_1\sim D_2$ with $D_i=x+y_i+p_i$ and $x+y_i\sim H_i\in W_2^1(\Gamma_i);$ $i=1,2.$ 
If $y_i,p_i\neq x,$ then harmonicity follows.
If instead $D_1(x)\geq2,$ then we would have, for instance, $2x+p_1\sim x+y_2+p_2.$ This implies in particular that the valence in $\Gamma_1$ over $x$ is $2$ and $D_2(x)\geq 2,$ otherwise, starting Dhar's burning algorithm from any point in $\Gamma\setminus (\Gamma_1\cup\Gamma_2)$ or in $\Gamma_1$ (depending on where $p_2$ does not lie), burns the whole support of $D_2,$ hence the whole graph. The fact that $D_1(x)=D_2(x)$ follows by $2$-edge connectivity, hence $\varphi_D^{\text{hyp}}$ is harmonic.
\end{proof}

\begin{rk}
    Using the arguments in \cite[Corollary 3.11, Theorem 3.12]{MC} the construction can be easily generalised to $\Gamma$ non necessarily $2$-edge connected. More precisely, for any connected graph, we define a $D$-hyperelliptic graph if the graph obtained by contracting all bridges is $D$-half hyperelliptic as in Definition \ref{def:D_hyp} and then generalize the construction of the morphism as done in \cite[Theorem 3.12]{MC}. 

    Over the intersection points of distinct hyperelliptic halves, we can also extend the morphism, as follows.
    First of all, let us denote by $\Gamma'$ the graph obtained by the contraction of a bridge $b$ and denote by $v$ the image of $b$ via such a contraction, which we will assume to be the intersection point of two distinct $D$-hyperelliptic halves.

    Then apply the constuction over the two $D$-hyperelliptic halves, as in Figure \ref{fg:bridge}.

      \begin{figure}[ht]
\centering
\begin{tikzcd}
    \begin{tikzpicture}
    \draw(0,0) --  (1,0.25);
    \draw[red](2,0.5) --  (1,0.25);
    
    \draw[red](0,0) --  (1,-0.5);
    \draw[blue] (-1,0) to [out=90, in=135] (0,0);
    \draw[blue] (-1,0) to [out=270, in=205] (-0.4,-0.2);
    \draw[red](-0.4,-0.2)--(0.6,-0.7);
    \vertex{0.6,-0.7}
    \draw[blue](1,0.25)--(0.2,0.5);
    \vertex{0.2,0.5}
    \draw[] (-0.4,-0.2) to [out=25, in=225] (0,0);
    \draw (0,1) node[anchor=north] {$\Gamma'$};
    \divisor{0,0}
    \divisor{1,0.25}
    \divisor{-0.4,-0.2}
    \draw(2,0.5)to [out=300, in=20](1,-0.5);
    \draw(2,0.5) to [out=220,in=120] (1,-0.5);
    \vertex{2,0.5}\vertex{1,-0.5}
    \vertex{-0.8,0.225}
    \draw[->] (0.2,-1) to (0.2,-1.5);
    \draw[blue](-0.7,-2)--(0.2,-2);
    \draw[red](1.3,-2)--(0.2,-2);
    \vertex{-0.7,-2}\vertex{0.2,-2}\vertex{1.3,-2}
    \end{tikzpicture}
\end{tikzcd} \caption{A non-degenerate harmonic morphism of degree $3$ from a tropical modification of $\Gamma'$ to a tree.}\label{fg:bridge}
\end{figure}
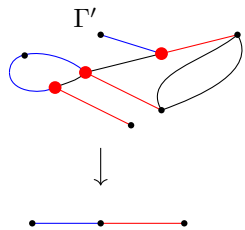

Then, in order to extend the morphism also over the bridge, it is sufficient to insert an exact copy of the bridge on the added trees via the previous construction, at the vertices in the same pre-image as $v.$ An example is provided in Figure \ref{fg:bridge1} and clearly the resulting morphism has the same properties.
     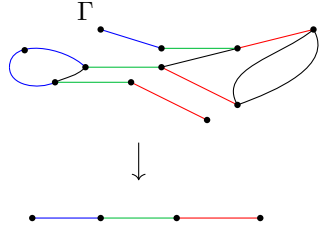
\begin{figure}[ht]
\centering
\begin{tikzcd}
    \begin{tikzpicture}
    \draw(1,0) --  (2,0.25);
    \draw[red](3,0.5) --  (2,0.25);
    \draw[verde](0,0)--(1,0);
    \draw[red](1,0) --  (2,-0.5);
    \draw[blue] (-1,0) to [out=90, in=135] (0,0);
    \draw[blue] (-1,0) to [out=270, in=205] (-0.4,-0.2);
    \draw[verde](-0.4,-0.2)--(0.6,-0.2);
    \draw[red](0.6,-0.2)--(1.6,-0.7);
    \vertex{1.6,-0.7}\vertex{0.6,-0.2}
    \vertex{1,0}
    \draw[blue](1,0.25)--(0.2,0.5);
    \vertex{0.2,0.5}
    \draw[] (-0.4,-0.2) to [out=25, in=225] (0,0);
    \draw (0,1) node[anchor=north] {$\Gamma$};
    \draw[verde](1,0.25)--(2,0.25);
    \vertex{0,0}
    \vertex{1,0.25}\vertex{2,0.25}
    \vertex{-0.4,-0.2}
    \draw(3,0.5)to [out=300, in=20](2,-0.5);
    \draw(3,0.5) to [out=220,in=120] (2,-0.5);
    \vertex{3,0.5}\vertex{2,-0.5}
    \vertex{-0.8,0.225}
    \draw[->] (0.7,-1) to (0.7,-1.5);
    \draw[blue](-0.7,-2)--(0.2,-2);
    \draw[verde](1.2,-2)--(0.2,-2);
    \draw[red](1.2,-2)--(2.3,-2);
    \vertex{-0.7,-2}\vertex{0.2,-2}\vertex{1.2,-2}\vertex{2.3,-2}
    \end{tikzpicture}
\end{tikzcd} \caption{A non-degenerate harmonic morphism of degree $3$ from a tropical modification of $\Gamma$ to a tree.}\label{fg:bridge1}
\end{figure}
\end{rk}

In the following subsection we will characterize metric graphs for which properties (H1), (H2) are satisfied, while the construction of the morphism over the complement of the union of $D$-hyperelliptic halves will be carried out in Section \ref{sc:non_hyp}.

\subsection{Necklaces of hyperelliptic graphs}\label{ssc:hyp_necklace}

As discussed in Example \ref{ex:neck}, the morphism $\varphi_D^{\text{hyp}}$ might not extend to a well-defined degree $3$ non-degenerate harmonic morphism over the union of $D$-hyperelliptic halves.

As anticipated, this might happen when properties (H1), (H2) are not satisfied. We will here show that if (H1), (H2) are not satisfied then $\Gamma$ must be a necklace.

\begin{ex}\label{ex:D_hyp_intersection}
Let us consider first a necklace of loops, i.e. a necklace whose connected components obtained by removing the cycle are all loops, as represented in Figure \ref{fg:necklace_loops}.

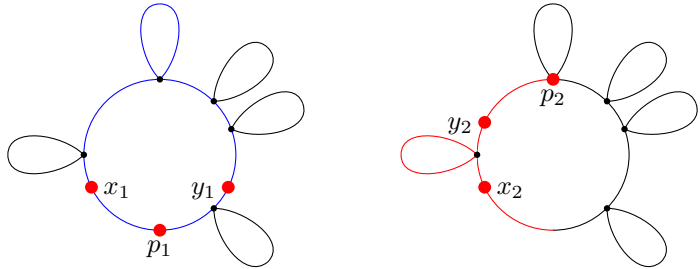
\begin{figure}[ht]\begin{tikzcd}
\begin{tikzpicture}
    \draw[blue](0,0) circle (1);
    \draw (-0.90,-0.43) node[anchor= west] {$x_1$};
    \draw (0.90,-0.43) node[anchor= east] {$y_1$};
    \draw (0,-1) node[anchor=north] {$p_1$};
    \draw[] (-2,0) to [out=90, in=135] (-1,0);
    \draw[] (-2,0) to [out=270, in=225] (-1,0);
    \draw[blue] (0,2) to [out=0, in=45] (0,1);
    \draw[blue] (0,2) to [out=180, in=135] (0,1);
    \draw[] (1.42,1.42) to [out=315, in=0] (0.71,0.71);
    \draw[] (1.42,1.42) to [out=135, in=90] (0.71,0.71);
    \draw[] (1.88,0.68) to [out=290, in=335] (0.94,0.34);
    \draw[] (1.88,0.68) to [out=110, in=65] (0.94,0.34);
    \draw[] (1.42,-1.42) to [out=220, in=270] (0.71,-0.71);
    \draw[] (1.42,-1.42) to [out=45, in=0] (0.71,-0.71);
    \vertex{-1,0}\vertex{0,1}\vertex{0.71,0.71}\vertex{0.94,0.34}\vertex{0.71,-0.71}
    \divisor{-0.90,-0.43}\divisor{0.90,-0.43}\divisor{0,-1}
    \end{tikzpicture}  &
    \begin{tikzpicture}
    \begin{scope}
    \clip (0,-1) rectangle (1,1);
    \draw[](0,0) circle (1);
    \end{scope}
    \begin{scope}
    \clip (-1,-1) rectangle (0,1);
    \draw[red](0,0) circle (1);
    \end{scope}
    \draw (-0.90,-0.43) node[anchor= west] {$x_2$};
    \draw (-0.90,0.43) node[anchor= east] {$y_2$};
    \draw (0,1) node[anchor=north] {$p_2$};
    \draw[red] (-2,0) to [out=90, in=135] (-1,0);
    \draw[red] (-2,0) to [out=270, in=225] (-1,0);
    \draw[] (0,2) to [out=0, in=45] (0,1);
    \draw[] (0,2) to [out=180, in=135] (0,1);
    \draw[] (1.42,1.42) to [out=315, in=0] (0.71,0.71);
    \draw[] (1.42,1.42) to [out=135, in=90] (0.71,0.71);
    \draw[] (1.88,0.68) to [out=290, in=335] (0.94,0.34);
    \draw[] (1.88,0.68) to [out=110, in=65] (0.94,0.34);
    \draw[] (1.42,-1.42) to [out=220, in=270] (0.71,-0.71);
    \draw[] (1.42,-1.42) to [out=45, in=0] (0.71,-0.71);
    \vertex{-1,0}\vertex{0,1}\vertex{0.71,0.71}\vertex{0.94,0.34}\vertex{0.71,-0.71}
    \divisor{-0.90,-0.43}\divisor{-0.90,0.43}\divisor{0,1}
    \end{tikzpicture}    
\end{tikzcd}\caption{A necklace of loops $\Gamma$ with a divisor $D\in W_3^1(\Gamma)$ and two $D$-hyperelliptic halves.}\label{fg:necklace_loops}
\end{figure}

Let $D=x_1+y_1+p_1\sim x_2+y_2+p_2$ defined as in the figure. Then one can check that $D\in W_3^1(\Gamma)$ and that the subgraphs $\Gamma_1,\Gamma_2$ colored in blue and red are $D$-hyperelliptic half, whose intersection does not consists of a single point.
\end{ex}

\begin{prop}
    Let $\Gamma$ be a $2$-edge connected, non-hyperelliptic metric graph with $D\in W_3^1(\Gamma).$ If $\Gamma$ is not a necklace, then any $D$-hyperelliptic half in $\Gamma$ satisfies (H1), (H2).
\end{prop}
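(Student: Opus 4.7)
The plan is to argue by contrapositive: suppose that either property (H1) fails for some $D$-hyperelliptic half $\Gamma_1$, or (H2) fails for a pair of distinct $D$-hyperelliptic halves $\Gamma_i \neq \Gamma_j$. In either case, I will produce a cycle $\gamma \subset \Gamma$ passing through at least three separating vertices of $\Gamma$, together with a decomposition of $\Gamma$ of the form required by Definition \ref{def:neck}, thereby exhibiting $\Gamma$ as a necklace.

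The main structural tool is Remark \ref{rk:2cut}: for any $D$-hyperelliptic half $\Gamma_k$ (not consisting only of a portion of a loop) the two distinguished edges $e_1^{(k)}, e_2^{(k)}$ form a $2$-edge cut of $\Gamma$. Hence the topological boundary $\Gamma_k \cap \overline{\Gamma \setminus \Gamma_k}$ lies along these two edges, and any additional boundary point must be a vertex where $\Gamma_k$ meets the complement along a separating vertex of $\Gamma$.

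Suppose first that (H2) fails, so distinct $\Gamma_i \neq \Gamma_j$ share at least two points. By condition (2) of Definition \ref{def:D_hyp}, each half is a union of supports of effective divisors equivalent to its hyperelliptic divisor, so $\Gamma_i \cap \Gamma_j$ is itself a union of such supports and, combined with connectedness, contains an arc $\alpha$ lying on a cycle $\gamma$ of $\Gamma$. The endpoints of $\alpha$ are precisely where the respective $2$-edge cuts of $\Gamma_i$ and $\Gamma_j$ interchange, and I would verify that each such endpoint is a separating vertex of $\Gamma$. Walking around $\gamma$ and producing iteratively a new $D$-hyperelliptic half for each component of $\Gamma \setminus \gamma$ hanging at one of these separating vertices, I obtain a circular chain of beads whose union, together with $\gamma$, is $\Gamma$. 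Since $\Gamma$ is non-hyperelliptic, this chain cannot collapse to only two beads (that would force $\Gamma$ itself to admit a degree-$2$ rank-$1$ divisor), so $\gamma$ has at least three separating vertices and $\Gamma$ is a necklace.

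Suppose now that (H1) fails. Then there is a $D$-hyperelliptic half $\Gamma_1$ with an additional intersection point $z \in \Gamma_1 \cap \overline{\Gamma \setminus \Gamma_1}$ distinct from $x_0, y_0$. By condition (2) of Definition \ref{def:D_hyp} there is $H' \sim_\Gamma H$ with $z \in \operatorname{Supp}(H')$, hence $H' \sim z + z'$ for some $z' \in \Gamma_1$. Using that $\{e_1, e_2\}$ is a $2$-edge cut and applying Dhar's burning algorithm from $z$ and from points outside $\Gamma_1$, I would show that $z'$ is likewise a separating vertex and that $z, z'$ lie on a cycle $\gamma$ passing through the original boundary points. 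By replacing the free point $p$ in $D \sim H + p$ with a point on $\gamma$, one produces a further $D$-hyperelliptic half distinct from $\Gamma_1$ and overlapping it along an arc, reducing this case to the (H2)-failure setting and concluding again that $\Gamma$ is a necklace.

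The central difficulty is ensuring that the intersection of two $D$-hyperelliptic halves is genuinely an arc on a cycle rather than some more intricate configuration with branchings, and that the circular chain of beads has at least three elements. I expect Dhar's burning algorithm, in the spirit of Remark \ref{rk:neck_not_hyp}, to be the key tool for both exclusions: a branching vertex of valence $\geq 3$ inside a shared portion would be burned by fire started anywhere outside, contradicting rank $1$ for $D$, while the two-bead collapse would force $\Gamma$ itself to be hyperelliptic, contrary to hypothesis.
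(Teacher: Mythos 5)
Your overall strategy (contrapositive, $2$-edge cuts via Remark \ref{rk:2cut}, Dhar's burning algorithm to locate separating vertices) points in the right direction, and it matches the spirit of the paper's Lemmas \ref{lm:necklace1} and \ref{lm:necklace2}. However, there is a genuine gap at the heart of your (H2) argument. You assume that if two distinct $D$-hyperelliptic halves share at least two points, then their intersection contains an \emph{arc} lying on a cycle. This does not follow: the intersection of two unions of supports of effective divisors need not be a union of supports, need not be connected, and the critical case is precisely when $\Gamma_1$ and $\Gamma_2$ meet in exactly two \emph{isolated} boundary points $x_0,y_0$ with no shared arc at all. The paper handles the interior-overlap case separately (Lemma \ref{lm:necklace2}, which does produce a necklace) and then, in Lemma \ref{lm:inters}, rules out two isolated common boundary points by a completely different mechanism: it uses the two hyperelliptic involutions $i_1,i_2$ to produce points $x_1=i_1(x_0)\notin\Gamma_2$ and $x_2=i_2(x_0)\notin\Gamma_1$, and then derives a contradiction from the rational function realizing $x_0+x_1\sim y_0+y_1$, whose minimum and maximum force a path through $x_0$ or $x_1$ that cannot exist. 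Your sketch never engages with this configuration, and Dhar's algorithm applied to a hypothetical branching vertex does not substitute for it.

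Two further points. First, your claim that the circular chain of beads cannot collapse to two beads "because that would force $\Gamma$ to be hyperelliptic" is unjustified: a cycle with two separating vertices carrying two hanging components is not a necklace under Definition \ref{def:neck} (which requires $n\geq 3$), and such a graph need not be hyperelliptic, so you would be left with a configuration that is neither excluded by hypothesis nor a necklace. The paper's Lemma \ref{lm:necklace1} instead produces the separating vertices one at a time by iterating the burning/slope-$(-1)$ argument around both components of the complement of the $2$-edge cut, and verifies along the way that consecutive separating vertices are not joined by a single edge. Second, your treatment of (H1) reduces to (H2) by asserting the existence of "a further $D$-hyperelliptic half overlapping $\Gamma_1$ along an arc" without constructing it; the paper's Lemma \ref{lm:gluing} avoids this by a direct contradiction: assuming an extra boundary point $x\notin\Gamma_1\cap\overline{\Gamma\setminus\Gamma_1}$ with $H\sim x_0+x$, it uses Lemma \ref{lm:necklace1} to locate the component $C_1\subset\Gamma_1$, picks a suitable $y\in\Gamma_1\cap C_0$, and shows the resulting rational function would have to be constant along a path crossing a point of nonzero total slope. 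You would need to supply arguments of this kind to close both cases.
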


The proof of this proposition is a direct consequence of the following lemmas.

\begin{lem}\label{lm:necklace1}
    Let $\Gamma_1\subset\Gamma$ be a $D$-hyperelliptic half, with $D\sim H+p$ and $H=x+y$ supported in the interior of a $2$-edge cut $\{e_1,e_2\}$. Consider the two connected components of $\Gamma\setminus\{e_1,e_2\}.$ If none of them is entirely contained in $\Gamma_1$ then $\Gamma$ is a necklace.
\end{lem}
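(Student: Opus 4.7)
The plan is to show that under the hypotheses the graph $\Gamma$ admits the decomposition of Definition \ref{def:neck}. The cycle $G_0$ of this decomposition will be a cycle $\gamma\subset \Gamma_1$ containing both $e_1$ and $e_2$, and the subgraphs $G_1,\ldots,G_n$ will be the connected components of $\Gamma\setminus\gamma$ not contained in $\Gamma_1$. First I would construct $\gamma$: by Remark \ref{rk:2cut} one can slide the pair $(x,y)$ along pairs of edges of $\Gamma_1$ which form $2$-edge cuts of $\Gamma$, and since by hypothesis $\Gamma_1\cap A$ and $\Gamma_1\cap B$ are both non-empty, iterating this sliding yields paths $P_A\subset \Gamma_1\cap A$ and $P_B\subset \Gamma_1\cap B$ joining the $A$-endpoints and $B$-endpoints of $e_1,e_2$ respectively. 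Setting $\gamma:=e_1\cup P_A\cup e_2\cup P_B$ gives the desired cycle in $\Gamma_1$.

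Next I would show that every connected component $G_i$ of $\Gamma\setminus\gamma$ which is not contained in $\Gamma_1$ meets $\gamma$ at a single vertex $u_i$, and that this vertex is separating in $\Gamma$. Indeed, if such a $G_i$ met $\gamma$ at two distinct vertices $u\neq u'$, the two disjoint paths from $u, u'$ into $G_i$ would allow one to build a rational function $f$ on $\Gamma$ with $\operatorname{div}(f)$ having poles at suitable shifted points on $e_1,e_2$ and zeroes inside $G_i$, so that $H+\operatorname{div}(f)$ would be an effective divisor linearly equivalent to $H$ with support meeting $G_i\subseteq \Gamma\setminus\Gamma_1$, contradicting condition (2) of Definition \ref{def:D_hyp}. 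Hence each such $G_i$ attaches at a unique separating vertex $u_i\in\gamma$, giving candidate pieces for $G_1,\ldots,G_n$.

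Finally I would count separating vertices on $\gamma$. By hypothesis there is a component of $A\setminus\Gamma_1$ attached at some separating vertex $u_A\in P_A$, and a component of $B\setminus\Gamma_1$ attached at some $u_B\in P_B$. The hyperelliptic involution $\sigma$ on $\Gamma_1$ swapping $e_1$ and $e_2$ sends $u_A$ to another vertex $\sigma(u_A)\in\gamma$, and a careful use of the symmetry of $\Gamma_1$ together with the $2$-edge connectivity of $\Gamma$ and the non-hyperellipticity assumption should yield $\{u_A, u_B, \sigma(u_A)\}$ as three distinct separating vertices of $\gamma$, each carrying a non-trivial attached component, exhibiting $\Gamma$ as a necklace. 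The main obstacle is precisely this last step: one must rule out the degenerate configurations in which $\sigma(u_A)=u_B$ or $\sigma(u_A)$ has no attached component, and these have to be reduced, via condition (2) of Definition \ref{def:D_hyp}, to a case where either $A$ or $B$ lies inside $\Gamma_1$, contrary to the hypothesis.
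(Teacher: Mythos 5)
There is a genuine gap, and it sits exactly where you say it does, but the problem is larger than the single ``main obstacle'' you flag at the end: none of your three steps is actually carried out, and the missing ingredient throughout is the one the paper's argument is built on, namely Dhar's burning algorithm applied to the full degree-three divisor $x'+y'+p$ (using that $D$, not just $H$, has rank $1$) from a point $z$ of a component not contained in $\Gamma_1$. In step 1, the ``sliding'' of $(x,y)$ does not produce well-defined paths $P_A,P_B$: by condition (2) of Definition \ref{def:D_hyp} the supports of effective divisors equivalent to $H$ sweep out all of $\Gamma_1$, which is a hyperelliptic metric graph and in general much larger than a cycle, so the pair can branch into hyperelliptic pieces of $\Gamma_1$, and it can stall at a vertex where the next two candidate edges fail to form a $2$-edge cut of $\Gamma$ (Remark \ref{rk:2cut} gives you a $2$-edge cut through the \emph{current} support, not a canonical next step). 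The uniqueness of the path traced in each component is precisely what the paper extracts from the observation that $x'+y'$ is $z$-reduced while $x'+y'+p-z$ is still equivalent to an effective divisor, so the fire from $z$ must burn exactly one of $x',y'$ and be stopped by the other together with $p$; nothing in your outline substitutes for this. In step 2, the inference ``$G_i$ meets $\gamma$ in two vertices $\Rightarrow$ some effective $H'\sim H$ has support meeting $G_i$'' is not justified — a linear system does not spread into an attached piece merely because it is attached at two points — and even if such an $H'$ existed it would not contradict condition (2), since $G_i\not\subseteq\Gamma_1$ does not mean $G_i\cap\Gamma_1=\emptyset$. Moreover, the necklace decomposition of Definition \ref{def:neck} also requires the components of $\Gamma\setminus\gamma$ that \emph{are} contained in $\Gamma_1$ (chords, hyperelliptic lobes of $\Gamma_1$) to attach at a single separating vertex, and you never address these.

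Step 3 is, as you acknowledge, the real content of the lemma, and it is left unresolved (``should yield'', ``the main obstacle is precisely this last step''). The paper obtains the separating vertices not from the hyperelliptic involution of $\Gamma_1$ but from the burning argument: sliding the two chips that stop the fire towards the burnt region, every vertex of the canonical model they reach must be separating (otherwise two burnt edges would enter it and it would burn, contradicting the rank of $D$), two consecutive such vertices cannot be joined by a second edge (this would destroy the $2$-edge cut), and running the argument in both components of $\Gamma\setminus\{e_1,e_2\}$ closes up the cycle. Your candidate triple $\{u_A,u_B,\sigma(u_A)\}$ is not shown to consist of separating vertices of $\Gamma$, nor to carry attached components, and there is no mechanism in your outline for reducing the degenerate configurations to the hypothesis that one of $A$, $B$ lies in $\Gamma_1$. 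As it stands the proposal is a plausible plan whose every step still needs the paper's key idea to be completed.
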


\begin{proof}
    By definition of $D$-hyperelliptic half, we have $D\sim H+p$ with $p\in\overline{\Gamma\setminus \Gamma_1}.$ Then clearly the connected component of $\Gamma\setminus\{e_1,e_2\}$ containing $p,$ will never be entirely contained in $\Gamma_1$ unless $\Gamma=\Gamma_1,$ which is impossible since $\Gamma$ is assumed to be non-hyperelliptic.
    
    Assume then that also the other component $\Gamma'$ is not entirely contained in $\Gamma_1$: there is a point $z\in\Gamma'$ such that $z\notin\Gamma_1.$ This means that $z\not\in\operatorname{Supp}(H'),$ where $H'\sim H,$ i.e. $x+y-z$ cannot be linearly equivalent to an effective divisor, hence there exist $x',y'$ such that $x+y\sim x'+y',$ and Dhar's burning algorithm burns the whole graph if we consider the divisor $x'+y'$ and start a fire at $z.$

    Since $D$ has rank $1,$ the divisor $x'+y'+p-z$ must be linearly equivalent to an effective divisor. In particular, if we consider $D\sim x'+y'+p$ and start a fire at $z,$ then the graph cannot burn all. 

    This means that the fire has to burn precisely one between $x',y'$ and then stop at $p.$ Say the fire burns $x'$ and stops at $y',p.$ 
    Then in particular there is a unique path in $\Gamma'$ from $x'$ to $y'.$ 
    
    Moreover, the last two edges of the burnt paths stopping at $y',p$ respectively, must form a $2$-edge cut. We can then consider the rational function with slope $-1$ along such an edge cut, from $y',p$ towards the burnt edges, until the first path reaches a vertex of the canonical model. 
    Any such vertex must be a separating vertex otherwise we would have two incoming burning edges which would burn the vertex. Notice that this is true also in the case the two paths reach the vertex at the same time. The two vertices cannot be connected via an edge since otherwise $x',y'$ would not determine a $2$-edge cut: this second edge would determine a second path in $\Gamma'$ connecting $x'$ to $y'.$
    
    The same argument can now be applied to the other component of $\Gamma\setminus\{e_1,e_2\},$ by moving $y',p'$ towards the unburnt part of the graph, which proves that $\Gamma$ is a necklace.
\end{proof}

\begin{lem}\label{lm:necklace2}
    Let $\Gamma_1,\Gamma_2\subset\Gamma$ be two distinct $D$-hyperelliptic halves.
    If they meet in their interior, then $\Gamma$ is a necklace.
\end{lem}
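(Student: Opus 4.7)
The strategy is to combine Remark \ref{rk:2cut} with Lemma \ref{lm:necklace1} in order to convert a shared interior point of $\Gamma_1,\Gamma_2$ into a necklace structure on $\Gamma$. The loop-portion case (as in Remark \ref{rk:hyp_loops}) admits a direct inspection: two overlapping loop-portion halves either coincide (contradicting distinctness) or impose the presence of two edges forming a $2$-edge cut together with separating vertices already organising the necklace, so I assume below that neither $\Gamma_i$ is merely a portion of a loop.

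First I would pick $q\in\mathring{\Gamma_1}\cap\mathring{\Gamma_2}$ and let $e$ denote the edge of $\Gamma$ containing $q$. Since $q\in\Gamma_i$ for $i=1,2$, condition (2) of Definition \ref{def:D_hyp} yields $H_i\sim q+r_i$ in $\Gamma$, and Remark \ref{rk:2cut} guarantees that $r_i$ lies in the interior of an edge $f_i$ with $\{e,f_i\}$ a $2$-edge cut of $\Gamma$.

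Second, I would rule out $f_1=f_2$. In that case sliding the supports of $H_1$ and $H_2$ along the single cut $\{e,f_1\}$, combined with the relation $D\sim p_i+H_i$, yields $H_1\sim H_2$ in $\Gamma$; condition (2) of Definition \ref{def:D_hyp} then forces $\Gamma_1=\Gamma_2$, contradicting distinctness. Hence $f_1\neq f_2$. Since both $f_1$ and $f_2$ are bridges in $\Gamma\setminus e$, a short graph-theoretic argument shows that $\{f_1,f_2\}$ is itself a $2$-edge cut of $\Gamma$, and that the removal of $\{e,f_1,f_2\}$ decomposes $\Gamma$ into three subgraphs $A,B,C$ cyclically joined by the three edges. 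I then apply Lemma \ref{lm:necklace1} to $\Gamma_1$ with the cut $\{e,f_1\}$: if neither component of $\Gamma\setminus\{e,f_1\}$ sits inside $\Gamma_1$, the necklace conclusion is immediate. Otherwise one component, say the one containing $B$, lies entirely in $\Gamma_1$; then the hyperelliptic structure of $\Gamma_2$, which extends across $f_2$, is housed in the complementary region, and the symmetric application of Lemma \ref{lm:necklace1} to $\Gamma_2$ with $\{e,f_2\}$ provides a matching constraint on $C$. Combining the two configurations, the endpoints of $e,f_1,f_2$ in each of $A,B,C$ must collapse into single separating vertices, exhibiting $\Gamma$ as a necklace in the sense of Definition \ref{def:neck}.

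The main obstacle will be this last step: promoting the cyclic arrangement of the three $2$-edge cuts into a genuine necklace requires showing that in each of the three pieces $A,B,C$ the two endpoints of the incident cut edges really coincide, giving separating vertices. This relies essentially on the hyperelliptic structure of both $\Gamma_1$ and $\Gamma_2$ (the involutions attached to $H_1$ and $H_2$ together with the positions of $p_1,p_2$) and cannot be extracted from the raw $2$-edge-cut combinatorics alone.
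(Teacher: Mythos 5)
Your setup matches the paper's up to the point where the three edges $e,f_1,f_2$ and the pieces $A,B,C$ appear, but the argument does not close, and you correctly diagnose where it fails: you cannot promote the cyclic arrangement of $2$-edge cuts into an actual necklace, because nothing forces the endpoints of the cut edges in each piece to collapse into separating vertices. That last step is a genuine gap, and it is also the wrong target. The lemma does not need you to \emph{construct} the necklace; it only needs the dichotomy of Lemma \ref{lm:necklace1} run in the contrapositive direction. Concretely: assume $\Gamma$ is \emph{not} a necklace. Then Lemma \ref{lm:necklace1} applied to $\Gamma_1$ with the cut $\{e,f_1\}$ forces one of the two components of $\Gamma\setminus\{e,f_1\}$ (the one not containing $p_1$) to be entirely contained in $\Gamma_1$, and symmetrically for $\Gamma_2$ with $\{e,f_2\}$. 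The common region $\Gamma_0=\Gamma_1^{(1)}\setminus f_2=\Gamma_2^{(1)}\setminus f_1$ is then contained in \emph{both} $\Gamma_1$ and $\Gamma_2$, and one can arrange that it contains a pair of points in the support of an effective representative of $H_1$ and likewise of $H_2$. The uniqueness of the degree-$2$, rank-$1$ class on a metric graph (\cite[Theorem A.1]{L}) then gives $H_1\sim H_2$, and condition (2) of Definition \ref{def:D_hyp} forces $\Gamma_1=\Gamma_2$, contradicting distinctness. This is exactly the move your proposal is missing: the hyperelliptic input you say ``cannot be extracted from the raw $2$-edge-cut combinatorics'' enters through the uniqueness of the $g^1_2$, not through any analysis of the involutions or of $p_1,p_2$.

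Two smaller points. First, your dismissal of the loop-portion case by ``direct inspection'' is too thin; the paper handles it by noting (Remark \ref{rk:hyp_loops}) that each such half occupies at least half the loop, so two of them overlapping must share points $y_1',y_2'$ with $x+y_1'\sim x+y_2'$, which again funnels into the uniqueness argument. Second, before invoking Remark \ref{rk:2cut} you should also record, as the paper does, that $y_1$ and $y_2$ cannot both lie in $\Gamma_1\cap\Gamma_2$ and that $f_i\not\subset\Gamma_j$ for $i\neq j$ (both again via \cite[Theorem A.1]{L}); your step ruling out $f_1=f_2$ is in the right spirit but is strictly weaker than what is needed to set up the final contradiction.
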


\begin{proof}
    Let $e\subset\Gamma_1\cap\Gamma_2$ be an edge (of a refinement of $\Gamma_1$ and $\Gamma_2$). By definition, there is a point $x\in e$ such that $D\sim x+y_1+p_1\sim x+y_2+p_2$ with $y_i\in \Gamma_i$ and $p_i\in\overline{\Gamma\setminus \Gamma_i}$ for $i=1,2.$ If $y_1,y_2\in\Gamma_1\cap\Gamma_2$ then $\Gamma_1=\Gamma_2$ from \cite[Theorem A.1]{L}.
    By the same argument we may further assume that $\Gamma_1,\Gamma_2$ are not portions of the same loop. By Remark \ref{rk:hyp_loops} they both contain the same half of the length of the loop, hence we can always find $y_1',y_2'\in\Gamma_1\cap\Gamma_2$ such that $x+y_1'\sim x+y_2'$. 
    
    Then recall that from Remark \ref{rk:2cut} we may assume $y_i$ in the interior of an edge $e_i$ forming a $2$-edge cut with $e,$ and from \cite[Theorem A.1]{L}, we may assume $e_i\not\subset\Gamma_j$, with $i\neq j$.

    Then we can write
    $$\Gamma\setminus\{e,e_1\}= \Gamma_1^{(1)}\sqcup \Gamma_1^{(2)},$$
    $$\Gamma\setminus\{e,e_2\}= \Gamma_2^{(1)}\sqcup \Gamma_2^{(2)},$$ with $p_i\in\Gamma_i^{(2)}.$

    Let $\Gamma_0:= \Gamma_1^{(1)}\setminus {e_2}=\Gamma_2^{(1)}\setminus {e_1}.$    
    If by contradiction $\Gamma$ is not a necklace, then by Lemma \ref{lm:necklace1} we have that $\Gamma_0$ is entirely contained in both $\Gamma_1$ and $\Gamma_2.$ Moreover, by construction we may assume that $\Gamma_0$ contains pairs of points both in the support of $H_1,H_2$ where $x+y_i\sim H_i.$ Then by \cite[Theorem A.1]{L} $H_1\sim H_2$ and by definition of $D$-hyperelliptic half then $\Gamma_1=\Gamma_2,$ giving a contradiction.
\end{proof}

From now on, we will consider graphs which are not necklaces, therefore, as a consequence of Lemmas \ref{lm:necklace1} and \ref{lm:necklace2} we have that for any $D$-hyperelliptic half $\Gamma_1$, removing a $2$-edge cut supporting the divisor $H\in W_2^1(\Gamma_1)$ gives two components, one entirely contained in $\Gamma_1.$ Moreover, two distinct $D$-hyperelliptic halves cannot meet in their interior. Such properties will allow us to prove that in particular two distinct $D$-hyperelliptic halves meet at at most one point. This will be fundamental to prove that the gluing of the harmonic morphisms of degree $2$ constructed over two distinct hyperelliptic halves, as in Proposition \ref{prp:hyp_trig}, is still harmonic.

\begin{lem}\label{lm:gluing}
    Let $\Gamma_1$ be a $D$-hyperelliptic half in $\Gamma$, which is not a necklace. Then $\Gamma_1\cap\overline{\Gamma\setminus\Gamma_1}=\{x_0,y_0\}$ and $D-p\sim H$ as in Definition \ref{def:D_hyp}, with $H\sim x_0+y_0,$ (where $x_0$ and $y_0$ might be equal).

\end{lem}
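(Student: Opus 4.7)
The plan is to read off the two boundary points of $\Gamma_1$ directly from the $2$-edge cut supporting $H$, and to exclude any further overlap with $\overline{\Gamma\setminus\Gamma_1}$ by appealing to the two necklace lemmas already established.

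By Remark \ref{rk:2cut}, after handling separately the loop-portion case of Remark \ref{rk:hyp_loops} (in which $x_0=y_0$ is the unique vertex where the half meets its complement), we may choose a $2$-edge cut $\{e_1,e_2\}$ of $\Gamma$ and representatives $x\in\mathring{e_1}$, $y\in\mathring{e_2}$ with $H\sim x+y$. Write $\Gamma\setminus\{e_1,e_2\}=\Gamma^\circ\sqcup\Gamma'$ and let $a_i$, $b_i$ denote the endpoints of $e_i$, with $a_i\in\overline{\Gamma^\circ}$, $b_i\in\overline{\Gamma'}$. Because $\Gamma$ is not a necklace, Lemma \ref{lm:necklace1} places one component -- say $\Gamma^\circ$ -- entirely inside $\Gamma_1$, and the auxiliary point $p$ of Definition \ref{def:D_hyp} must then lie in $\Gamma'$. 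I set $x_0:=b_1$ and $y_0:=b_2$ and show these are the only points of $\Gamma_1\cap\overline{\Gamma\setminus\Gamma_1}$ and satisfy $H\sim x_0+y_0$.

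The linear equivalence $H\sim x_0+y_0$ follows from sliding $x+y$ across the cut by the rational function that is constant on $\overline{\Gamma^\circ}$, linear with compatible integer slopes on $e_1$ and $e_2$ (subdividing either edge when $\ell(e_1)\neq \ell(e_2)$), and constant on $\overline{\Gamma'}$. The same sliding shows that every interior point of $e_1$ (and of $e_2$) appears in the support of some $H'\sim H$, so condition~(2) of Definition \ref{def:D_hyp} combined with the closedness of $\Gamma_1$ as a subcurve yields $\overline{\Gamma^\circ}\cup e_1\cup e_2\subseteq\Gamma_1$; in particular $\{x_0,y_0\}\subseteq \Gamma_1\cap\overline{\Gamma\setminus\Gamma_1}$. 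For the reverse inclusion I argue by contradiction: if there were $z\in\Gamma_1\cap(\Gamma'\setminus\{b_1,b_2\})$, condition~(2) of Definition \ref{def:D_hyp} would supply $H\sim z+w$ for some effective $w$. Applying the burning analysis from the proof of Lemma \ref{lm:necklace1} to the divisor $z+w$, starting from an interior point of $\Gamma^\circ$, would reveal a second $2$-edge cut inside $\overline{\Gamma'}$ supporting a rank-$1$ degree-$2$ divisor, and thus a $D$-hyperelliptic half $\Gamma_2$ distinct from $\Gamma_1$ and meeting $\Gamma_1$ in the interior of $\overline{\Gamma'}$; Lemma \ref{lm:necklace2} would then force $\Gamma$ to be a necklace, contradicting the standing hypothesis.

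The main obstacle I anticipate is the sliding step when $\ell(e_1)\neq\ell(e_2)$: keeping the slopes integer-valued while simultaneously moving $x$ to $b_1$ and $y$ to $b_2$ may require a refinement of the model with intermediate vertices carrying additional principal divisor contributions that must be reabsorbed in $\overline{\Gamma^\circ}$. Equivalently, one needs to verify that the hyperelliptic involution of $\Gamma_1$ exchanges $a_i$ with $b_i$, uniformly covering the generic case and the loop-portion case of Remark \ref{rk:hyp_loops}.
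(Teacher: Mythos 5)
There is a genuine gap, and it sits exactly where you place your two boundary points. You fix \emph{one} effective representative $x+y$ of $H$ on a $2$-edge cut $\{e_1,e_2\}$ and declare $x_0,y_0$ to be the endpoints $b_1,b_2$ of that cut on the side of $p$. But condition (2) of Definition \ref{def:D_hyp} says $\Gamma_1$ is the union of the supports of \emph{all} effective representatives of $H$, and these can extend well past $b_1,b_2$ into $\Gamma'$. The paper's own Figure \ref{fig:hyp_block} shows this: there the $D$-hyperelliptic half is a chain of two cycles through vertices $v_0,v_1,v_2$ with $H\sim 2v_0\sim 2v_1\sim 2v_2$ and boundary $\{v_2\}$; choosing the representative $x+y$ on the first cycle gives $b_1=b_2=v_1\neq v_2$. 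So $\{b_1,b_2\}$ depends on the chosen representative and is not in general $\Gamma_1\cap\overline{\Gamma\setminus\Gamma_1}$, and correspondingly $H\sim b_1+b_2$ need not hold. The equality $d(x,b_1)=d(y,b_2)$ that your sliding step needs (and that you flag as an ``anticipated obstacle'') is genuinely false for an arbitrary choice of cut; it is the content of the lemma for the \emph{correct} boundary points, not something that can be read off a chosen cut.

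The contradiction argument meant to exclude $z\in\Gamma_1\cap(\Gamma'\setminus\{b_1,b_2\})$ does not repair this, because it conflates two different objects: such a $z$ gives an effective representative $z+w\sim_{\Gamma}H$ of the \emph{same} class $H$, so the $D$-hyperelliptic half it determines is, by condition (2), $\Gamma_1$ itself and not a distinct half $\Gamma_2$. Lemma \ref{lm:necklace2} requires two \emph{distinct} $D$-hyperelliptic halves meeting in their interiors, so it cannot be invoked; and indeed in the example above such a $z$ exists while $\Gamma$ is not a necklace, so no contradiction is available. The paper's proof avoids this trap by starting from an arbitrary point $x_0\in\Gamma_1\cap\overline{\Gamma\setminus\Gamma_1}$ rather than from a chosen cut: assuming the conjugate $x$ of $x_0$ (i.e.\ $H\sim x_0+x$) is not itself a boundary point, it picks an interior point $y$ of $\Gamma_1$ beyond $x$, writes $x_0+x\sim_{\Gamma} y+y'$, and analyses where $y'$ can lie to contradict $x_0$ being a boundary point; this shows the hyperelliptic involution preserves the boundary and pins it down to $\{x_0,y_0\}$. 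Salvaging your route would require characterising the \emph{maximal} chain of $2$-edge cuts through which $x+y$ can be slid toward $p$, which amounts to redoing that argument.
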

\begin{proof}
    Let $x_0\in \Gamma_1\cap\overline{\Gamma\setminus\Gamma_1},$ then, in order to prove the claim, it suffices to show that $x_0+x\sim H,$ $\forall x\in (\Gamma_1\cap\overline{\Gamma\setminus\Gamma_1})\setminus\{x_0\},$ or that $2x_0\sim H$ if $\{x_0\}= \Gamma_1\cap\overline{\Gamma\setminus\Gamma_1}.$
    Indeed, this implies that $\Gamma_1\cap\overline{\Gamma\setminus\Gamma_1}$ consists of $x_0$ and the unique point $y_0$ such that $x_0+y_0\sim H$, determined by the hyperelliptic involution.

    The statement is clearly true when $\Gamma_1$ consists of a portion of a loop. Therefore let us assume that it is not, and 
    assume by contradiction that $H\sim x_0+x,$ with $x\not\in\Gamma_1\cap\overline{\Gamma\setminus\Gamma_1}$. 
    By Remark \ref{rk:2cut}, $x_0,x$ define a $2$-edge cut $\mathcal E\subset \Gamma,$ of edges in $\Gamma_1.$ 
    By $2$-edge connectivity, $\Gamma$ with the cut removed consists of two components $C_0$, $C_1$, with $C_1\subset\Gamma_1$, as a consequence of Lemma \ref{lm:necklace1}. Moreover, since $x_0\in\Gamma_1\cap\overline{\Gamma\setminus\Gamma_1}$ and $C_1\subset\Gamma_1,$ then $x_0,x\in C_0.$ Take $y\in\Gamma_1\cap C_0,$ along a path starting from $x,$ entirely contained in $\Gamma_1.$ 
    Such a point exists since we are under the assumption that $x\not\in\Gamma_1\cap\overline{\Gamma\setminus\Gamma_1}.$

    Then consider the linear equivalence $x_0+x\sim_{\Gamma} y+y'$ for some $y'\in \Gamma_1.$ 
    If $y'\in C_1\cup \mathcal E$ then, from \cite[Lemma 3.1 (i)]{MC}, there is a path $yy'$ over which $f$ is constant. However such a path has to cross either $x_0$ or $x_1$, over which the total slope is non-trivial. Therefore $y'\in C_0,$ and the path $x_0y$ does not pass through $x,$ otherwise we would have again that the path $yy'$ crosses either $x_0$ or $x.$
    This contradicts $x_0\in\Gamma_1\cap\overline{\Gamma\setminus\Gamma_1}.$
\end{proof}

\begin{lem}\label{lm:inters}
    Let $\Gamma_1,$ $\Gamma_2$ be two distinct $D$-hyperelliptic halves in $\Gamma$, which is not a necklace. Then $|\Gamma_1\cap\Gamma_2|\leq1.$
\end{lem}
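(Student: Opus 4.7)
The plan is to combine Lemma \ref{lm:necklace2} (which prevents distinct $D$-hyperelliptic halves from sharing any edge of a common refinement when $\Gamma$ is not a necklace) with Lemma \ref{lm:gluing} (which shows that $\Gamma_i\cap\overline{\Gamma\setminus\Gamma_i}$ consists of at most two points) and the defining condition (2) of Definition \ref{def:D_hyp}. The first step I would carry out is topological: since $\Gamma$ is not a necklace, Lemma \ref{lm:necklace2} forbids $\Gamma_1$ and $\Gamma_2$ from meeting along any arc of positive length, for a shared edge of a common refinement would immediately produce a necklace structure. Consequently $\Gamma_1\cap \Gamma_2$ is a finite set of points, and I would observe that any $p$ in this intersection necessarily lies in $\Gamma_i\cap \overline{\Gamma\setminus\Gamma_i}$ for \emph{both} $i=1,2$: otherwise a small open neighbourhood of $p$ would be contained in both $\Gamma_1$ and $\Gamma_2$, producing a shared arc.

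Given this reduction, the claim follows quickly. Indeed, by Lemma \ref{lm:gluing} one has $|\Gamma_i\cap \overline{\Gamma\setminus\Gamma_i}|\leq 2$ for each $i$, so $|\Gamma_1\cap \Gamma_2|\leq 2$. I would then argue by contradiction: suppose $\Gamma_1\cap \Gamma_2=\{x_0,y_0\}$ consists of exactly two distinct points. By the previous step both boundary sets must coincide with $\{x_0,y_0\}$, and Lemma \ref{lm:gluing} yields
\[
H_1\sim_{\Gamma} x_0+y_0\sim_{\Gamma} H_2.
\]
By condition (2) of Definition \ref{def:D_hyp}, the $D$-hyperelliptic half $\Gamma_i$ is intrinsically described as the set of supports of effective divisors on $\Gamma$ linearly equivalent to $H_i$; since $H_1\sim_{\Gamma} H_2$, this locus is the same for $i=1,2$, forcing $\Gamma_1=\Gamma_2$ and contradicting distinctness.

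The main obstacle is really the first topological step rather than the algebraic conclusion: one has to rule out degenerate configurations where the two subgraphs could touch at a common vertex without sharing an arc, and the slightly delicate situation of Remark \ref{rk:hyp_loops}, where one or both of the $\Gamma_i$ is a proper portion of a loop. In that last case, Remark \ref{rk:hyp_loops} guarantees that each such portion occupies more than half of the loop, so two distinct portions of the same loop overlap along an arc, once again triggering Lemma \ref{lm:necklace2}. After this cleanup, the proof amounts to the short algebraic step above.
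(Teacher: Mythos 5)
Your proof is correct, and it takes a genuinely shorter route than the paper's. Both arguments begin with the same reduction: since $\Gamma$ is not a necklace, Lemma \ref{lm:necklace2} forbids a shared arc, so every point of $\Gamma_1\cap\Gamma_2$ must lie in $\Gamma_i\cap\overline{\Gamma\setminus\Gamma_i}$ for both $i$. From there the paper picks one such point $x_0$, introduces the two hyperelliptic involutions, sets $x_j=i_j(x_0)$, disposes of the case $x_1=x_2$ via $p_1\sim p_2$, proves $x_2\notin\Gamma_1$ and $x_1\notin\Gamma_2$ by a min--max argument on a rational function, and finally rules out a second intersection point $y_0$ by tracing a path from $x_2$ to $y_0$ inside $\Gamma_2$. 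You instead exploit the full strength of Lemma \ref{lm:gluing}: each boundary set is \emph{exactly} $\{x_0,y_0\}$ with $H_i\sim x_0+y_0$, so a two-point intersection forces the two boundaries to coincide, hence $H_1\sim_{\Gamma}H_2$, and condition (2) of Definition \ref{def:D_hyp} (the half is the union of supports of effective divisors in the class of $H$) immediately gives $\Gamma_1=\Gamma_2$. In effect you show that the only configuration a two-point intersection permits is the paper's degenerate case $x_1=x_2$, which both proofs kill the same way; the paper's extra machinery is there because it never invokes the two-point bound on the boundary directly. Two small imprecisions in your write-up, neither fatal: the negation of ``$p$ lies in both boundaries'' is that $p$ is interior to \emph{at least one} $\Gamma_i$, and the shared arc then comes from intersecting a neighbourhood contained in that $\Gamma_i$ with an arc of the \emph{other} half emanating from $p$ (not from a neighbourhood contained in both); and Remark \ref{rk:hyp_loops} only gives length \emph{at least} half the loop, so two distinct half-loop portions could a priori meet only at their two endpoints without sharing an arc --- but that case is exactly the two-point situation your main argument already excludes, so no separate treatment is needed.
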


\begin{proof}
    Let us first notice that if $\Gamma_1\cap\Gamma_2\neq\emptyset,$ $\Gamma_1\not\subseteq\Gamma_2$ and $\Gamma_2\not\subseteq\Gamma_1,$ then $(\Gamma_1\cap \overline{\Gamma\setminus\Gamma_1})\cap(\Gamma_2\cap \overline{\Gamma\setminus\Gamma_2})\neq\emptyset$. Indeed, a consequence of Lemma \ref{lm:necklace2} is that $\Gamma_1,$ $\Gamma_2$ cannot meet in their interiors.

    Denote by $i_j$ the hyperelliptic involution on $\Gamma_j,$ $j=1,2.$
    Take $x_0\in (\Gamma_1\cap \overline{\Gamma\setminus\Gamma_1})\cap (\Gamma_2\cap \overline{\Gamma\setminus\Gamma_2})$ and
    by Lemma \ref{lm:gluing}, we have $x_1=i_1(x_0)\in \Gamma_1\cap \overline{\Gamma\setminus\Gamma_1}$ and $x_2=i_2(x_0)\in \Gamma_2\cap \overline{\Gamma\setminus\Gamma_2}.$ If $x_1=x_2$ then we would have $x_0+x_1+p_1\sim x_0+x_2+p_2$ hence $p_1\sim p_2$ which contradicts $\Gamma_1\neq\Gamma_2.$
    Assume then $x_1\neq x_2.$
    If $x_2\in \Gamma_1,$ then there exists $i_1(x_2)\in\Gamma_1$ such that $x_0+x_1\sim x_2+i_1(x_2).$ This defines a rational function $f$ which is constant on any path on the complement of $\Gamma_1.$ Among such path there is one connecting $x_0$ and $x_2,$ otherwise we would have $\Gamma_2\subseteq\Gamma_1,$ which is not possible by the above remark. This gives a contradiction: $x_0$ lies in the set $\mathbf m$ over which $f$ is minimized $x_2\in\mathbf M,$ the set over which $f$ is maximized. 
    Then $x_2\notin \Gamma_1$ and, by symmetry, we also have $x_1\notin\Gamma_2.$

    Finally, assume by contradiction that there exists $y_0\in\Gamma_1\cap\Gamma_2\setminus\{x_0\}$ and consider the linear equivalence $x_0+x_1\sim_{\Gamma} y_0+y_1,$ for some $y_1\in\Gamma_1$. This now defines a rational function $g$ which achieves its minimum on the complement of $\Gamma_1,$ hence on $x_2.$ 
    
    By connectivity of $\Gamma_2$, there is a path $x_2y_0$ in $\Gamma_2,$ which is not entirely contained in $\Gamma_1$ since $x_2\notin\Gamma_1$. 
    Since $g$ is minimized on $x_2$ and maximized on $y_0,$ then the path $x_2y_0$ must pass through $x_0$ or $x_1.$ However such a path cannot pass through $x_0$ since $x_0\in\Gamma_2\cap \overline{\Gamma\setminus\Gamma_2}$ but also it cannot pass through $x_1$ since $x_1\not\in\Gamma_2,$ giving a contradiction.
\end{proof}

Let us now consider the case where $\Gamma$ is a necklace. We have claimed that in general in this case it is not possible to construct a non-degenerate harmonic morphism of degree $3$ to a tree or a triangle of trees, but there might be some exceptions.

\begin{ex}\label{ex:circle_loops}
We consider again a necklace of loops. As discussed in Example \ref{ex:neck}, the graph is divisorially trigonal and we show that it is also trigonal.

Each of the loops in the necklace is contained in a $D$-hyperelliptic half, namely the union of the loop and the two paths contained in the cycle with starting point $x$ of length $d(x,p),$ where $2x+p\sim D$, and $x$ is the vertex over which the loop is glued. However the above construction made for $D$-hyperelliptic halves for $\Gamma$ not a necklace will not yield a well defined harmonic and degree $3$ morphism to a tree.

Instead, we can consider the usual construction for the morphism over a single loop: add a vertex at the midpoint of the loop and add a leg of length half of that of the loop at $p$ and sent these three edges, of equal length, to an edge of same length. Then consider the two distinct paths from $x$ to $p$: divide each path in two edges, the closest to $x$ of length equal to a third of the whole path in which it is contained. Then send this edge with multiplicity $2$ and the other edge completing the path with multiplicity $1$ to the same edge of the three. Contract instead all the other loops. The resulting morphism, represented in Figure \ref{fg:circle_loops3}, is non-degenerate, harmonic and of degree $3.$

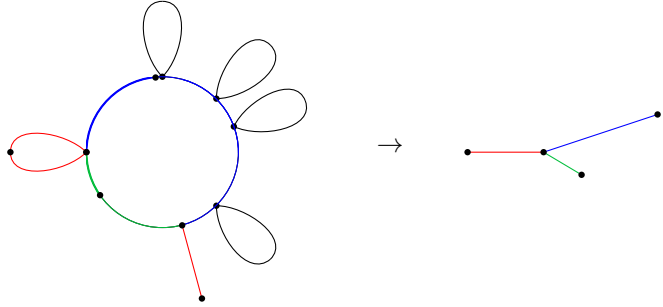
\begin{figure}[ht]\begin{tikzcd}
    \begin{tikzpicture}
    \draw(0,0) circle (1);
    \draw[red] (-2,0) to [out=90, in=135] (-1,0);
    \draw[red] (-2,0) to [out=270, in=225] (-1,0);
    
    \draw[] (0,2) to [out=0, in=45] (0,1);
    \draw[] (0,2) to [out=180, in=135] (0,1);
    \draw[] (1.42,1.42) to [out=315, in=0] (0.71,0.71);
    \draw[] (1.42,1.42) to [out=135, in=90] (0.71,0.71);
    \draw[] (1.88,0.68) to [out=290, in=335] (0.94,0.34);
    \draw[] (1.88,0.68) to [out=110, in=65] (0.94,0.34);
    \draw[] (1.42,-1.42) to [out=220, in=270] (0.71,-0.71);
    \draw[] (1.42,-1.42) to [out=45, in=0] (0.71,-0.71);
    \vertex{-1,0}\vertex{0,1}\vertex{0.71,0.71}\vertex{0.94,0.34}\vertex{0.71,-0.71}
    \vertex{-2,0}
    \draw[red] (0.26,-0.97)--(0.52,-1.94);
    \vertex{0.52,-1.94}
    \begin{scope}
    \clip (-1,-1) rectangle (0.26,0);
    \draw[verde](0,0) circle (1);\end{scope}
    \begin{scope}
    \clip (-1,-0.57) rectangle (-0.82,0);
    \draw[verde,thick](0,0) circle (1);
    \end{scope}
    \begin{scope}
    \clip (0.26,-1) rectangle (1,1);
    \draw[blue](0,0) circle (1);
    \end{scope}
    \begin{scope}
    \clip (-1,0) rectangle (1,1);
    \draw[blue](0,0) circle (1);
    \end{scope}
    \begin{scope}
    \clip (-1,0) rectangle (-0.09,0.99);
    \draw[blue,thick](0,0) circle (1);
    \end{scope}
    \vertex{-0.09,0.99}
    \vertex{-1,0} \vertex{0.26,-0.97}    \vertex{-0.82,-0.57}
    \end{tikzpicture}\qquad\rightarrow\qquad
    \begin{tikzpicture}
    \draw[red](-1,0)--(0,0);
    \draw[blue](1.5,0.5)--(0,0);
    \draw[verde](0.5,-0.3)--(0,0);
    \vertex{-1,0}
    \vertex{0,0}\vertex{1.5,0.5}\vertex{0.5,-0.3}
    \end{tikzpicture}    
\end{tikzcd}\caption{A tropical modification of a circle of loops of genus $6$ with a non-degenerate harmonic morphism of degree $3$ to a tree.}\label{fg:circle_loops3}
\end{figure}

Let us notice that we made no assumption on the edges' lengths, hence we can repeat the construction for any loop, which will yield a different non-degenerate harmonic morphism of degree $3$ to a tree. 
\end{ex}

In the above example, if we replace loops with different graphs, the analogue construction will not work: if instead of loops we consider hyperelliptic subgraphs with a non-trivial degree $2$ non-degenerate harmonic morphism to a tree, the resulting morphism would be degenerate.

We conclude the section by showing some properties for divisorially trigonal necklaces with $D$-hyperelliptic halves and by providing few exceptions of divisorially trigonal necklaces which are also trigonal.

\begin{lem}\label{lm:supp_neck}
    Let $\Gamma$ be a divisorially trigonal necklace. Let us denote by $\gamma$ a cycle with separating vertices. Then $D\in W_3^1(\Gamma)$ is such that $D\sim 2x+x'$ with $x,x'\in \gamma.$
\end{lem}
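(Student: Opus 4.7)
The plan is to reduce to the case where $D$ admits an effective representative supported on $\gamma$, and then to apply the explicit chip-firing argument already sketched in Example~\ref{ex:neck} to collapse the three chips into the form $2x+x'$.

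\textbf{Structural observation.} Because every vertex of $\gamma$ is a separating vertex of $\Gamma$, any two edges of $\gamma$ form a $2$-edge cut of $\Gamma$. This gives a very rich source of rational functions supported on (portions of) $\gamma$ and constant on each $\Gamma_j$: in particular the function of slope $-1$ used in Example~\ref{ex:neck}. This is the main tool.

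\textbf{Step 1: reduction to an effective representative supported on $\gamma$.} Let $D_0\sim D$ be effective. If $\mathrm{Supp}(D_0)\not\subset \gamma$, some chip $p$ lies in $\Gamma_j\setminus\{v_j\}$ for some $j$. I exploit the rank-$1$ hypothesis as follows: for every $w\in\gamma$ close to $v_j$ one has $D\sim w+E_w$ with $E_w$ effective of degree $2$, and by comparing the various $E_w$ as $w$ varies in $\gamma$ and applying Dhar's burning algorithm starting from a suitable separating vertex $v_i$, one forces the interior chips of $\Gamma_j$ to be exchangeable for chips at $v_j$ itself. The point is that the subgraph $\Gamma_j$ is attached to the rest of $\Gamma$ only at $v_j$, so any principal divisor on $\Gamma$ whose support meets $\Gamma_j\setminus\{v_j\}$ is realized by a rational function whose restriction to $\Gamma_j$ is itself rational on $\Gamma_j$ with a compensating contribution at $v_j$. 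Combined with the rank-$1$ condition on $\Gamma$, iterating this procedure produces $D'\sim D$ with $\mathrm{Supp}(D')\subset\gamma$.

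\textbf{Step 2: concentration into $2x+x'$.} Write $D'=p_1+p_2+p_3$ with $p_i\in\gamma$. Fix any $w\in\gamma$ and set $m=\min_i d(w,p_i)$. Taking the two $p_i$'s closest to $w$, the rational function of slope $-1$ along the two paths of length $m$ from those $p_i$'s towards $w$ (and constant elsewhere) is well defined precisely because any two edges of $\gamma$ form a $2$-edge cut; its divisor realises $D'\sim w+p_1'+p_2'$ with $p_1',p_2'\in\gamma$. A second application with a new target vertex brings two of the three chips together, giving $D\sim D'\sim 2x+x'$ with $x,x'\in\gamma$, as required.

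\textbf{Expected main obstacle.} The delicate point is Step~1: showing that every $D\in W_3^1(\Gamma)$ has a representative supported on $\gamma$. This is straightforward when $D_0$ already has its only off-cycle chip at a point that can be ``moved'' using a function supported on $\Gamma_j$, but becomes subtle when $\Gamma_j$ is hyperelliptic: the hyperelliptic involution of $\Gamma_j$ tends to trap chips inside $\Gamma_j$, and one must use the rank-$1$ hypothesis on the whole $\Gamma$ (together with the abundance of $2$-edge cuts in $\gamma$) to force a global equivalence that relocates these chips onto $\gamma$. The case analysis according to whether $v_j$ is a Weierstrass point of $\Gamma_j$ or not is where the bulk of the verification is concentrated.
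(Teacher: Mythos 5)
Your overall architecture is the right one and matches the paper's: Step 2 (collapsing three chips on $\gamma$ into $2x+x'$ via the slope $-1$ function along a $2$-edge cut of $\gamma$) is exactly the argument of Example~\ref{ex:neck}, which the paper simply cites at this point. The problem is that Step 1, which is the actual content of the lemma, is not proved: you describe a procedure (``comparing the various $E_w$ as $w$ varies \dots one forces the interior chips of $\Gamma_j$ to be exchangeable for chips at $v_j$'') and then explicitly defer the verification, flagging it as ``where the bulk of the verification is concentrated.'' That deferred verification is the lemma. As written, nothing in your text establishes the key linear equivalence that relocates the off-cycle chips onto $\gamma$.

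Concretely, the paper closes this gap as follows. Since $D$ has rank $1$, one may write $D\sim x_1+y_1+y_2$ with $x_1$ a separating vertex of $\gamma$. If $y_1,y_2\notin\gamma$, they cannot lie in two distinct components $\Gamma_i\neq\Gamma_1$ of $\Gamma\setminus\gamma$: Dhar's burning algorithm started inside one of those components (away from the chips) would burn the whole graph. So $y_1,y_2$ lie in a single $\Gamma_i$, and starting the fire from a point of $\gamma$ other than $x_1$ burns the entire cycle and must stop at $y_1,y_2$; the last two burnt edges form a $2$-edge cut of $\Gamma_i$, and iterating the slope $-1$ function along such cuts towards the burnt direction yields $y_1+y_2\sim 2w_i$ with $w_i$ the attaching vertex of $\Gamma_i$ on $\gamma$. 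This is the equivalence your plan is missing, and note that it is obtained uniformly by Dhar's algorithm --- no case analysis on whether $v_j$ is a Weierstrass point of $\Gamma_j$, nor any separate treatment of hyperelliptic components, is needed. Your ``structural observation'' about restricting rational functions to $\Gamma_j$ with a compensating contribution at $v_j$ is correct (it is the mechanism behind Lemma~\ref{lm:supp_sep_vert}), but on its own it does not produce the required equivalence; the rank-$1$ hypothesis must be converted into the $2$-edge-cut/burning argument above to actually push the chips to $w_i$.
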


\begin{proof}
    We have already seen from Example \ref{ex:neck} that, when a divisor $D\in W_3^1(\Gamma)$ is supported in the cycle, then we can always write it as $D\sim 2x+x'.$ Therefore we only need to prove that the support is entirely contained in the cycle $\gamma.$

    Since $D$ has rank $1,$ we may always assume $D\sim x_1+y_1+y_2$ with $x_1\in\gamma$ and $x_1$ a separating vertex. Let us also denote by $\Gamma_1$ the connected components of $\Gamma\setminus\gamma$ glued at $x_1.$
    Assume that $y_1,y_2\not\in\gamma.$
    Then $y_1, y_2$ cannot be contained in two distinct components $\Gamma_i\neq\Gamma_1:$ any such components is connected therefore starting Dhar's burning algorithm from any of the two component at a point distinct from $y_1,y_2$ would burn the whole graph. 
    Then $y_1,y_2\in \Gamma_i$ for some $i$ and without loss of generality $\Gamma_i$ is not a necklace and $y_1,y_2$ are not in the interior of the same edge.

    Starting Dhar's burning algorithm from any point in the cycle, distinct from $x_1,$ would burn $x_1,$ and the whole cycle, but it must stop at $y_1,y_2.$ The last two burnt edges form a $2$-edge cut in $\Gamma_i.$ We can consider the rational function with slope $-1$ from $y_1,y_2$ along such a $2$-edge cut until the first reaches a vertex, towards the burnt direction. Notice that both must reach a vertex otherwise the graph would burn. We repeat this argument to see that in particular we require $y_1+y_2\sim 2w_i$ with $w_i$ the separating vertex over which the component $\Gamma_i$ is glued on the vertex.
 \end{proof}

\begin{lem}
    Let $\Gamma$ be a necklace with $D\in W_3^1(\Gamma)$. Let $\Gamma_i\subset \Gamma$ be one of the components of $\Gamma$ with a cycle removed. If $\Gamma_i$ is hyperelliptic, then it is (contained in) a $D$-hyperelliptic half.
\end{lem}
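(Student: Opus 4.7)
The plan is to produce an explicit $D$-hyperelliptic half whose underlying subgraph contains $\Gamma_i$. First I would invoke Lemma~\ref{lm:supp_neck} to write $D \sim 2x + x'$ with $x, x' \in \gamma$, and then slide the double point along $\gamma$ (by rational functions of slope $-1$ on two consecutive edges of $\gamma$, as in Example~\ref{ex:neck}) to place $x$ at $x_i$, the separating vertex through which $\Gamma_i$ is glued to $\gamma$. This yields a representative $D \sim 2x_i + x_i'$ with $x_i' \in \gamma$.

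The natural candidate for a $D$-hyperelliptic half is $\tilde{\Gamma}_i := \{\operatorname{Supp}(H') : H' \geq 0,\, H' \sim_\Gamma 2x_i\}$, with $p := x_i' \in \overline{\Gamma \setminus \tilde{\Gamma}_i}$ and $H := 2x_i$ in Definition~\ref{def:D_hyp}. The containment $\Gamma_i \subseteq \tilde{\Gamma}_i$ is equivalent to the claim that $2x_i \in W_2^1(\Gamma_i)$, i.e.\ that $x_i$ is a Weierstrass point of the hyperelliptic graph $\Gamma_i$; once this is granted, every $p \in \Gamma_i$ lies in the support of an effective divisor $\sim 2x_i$ via the hyperelliptic involution $\iota_i$ of $\Gamma_i$.

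The main obstacle is proving the Weierstrass claim. I would exploit the separating role of $x_i$: for any $p \in \Gamma_i$, the rank $1$ condition on $D$ yields $E \geq 0$ with $D - p \sim_\Gamma E$, and a witnessing rational function $f$ decomposes as $(f_1, f_2)$ on $\Gamma_i$ and $\overline{\Gamma \setminus \Gamma_i}$, agreeing at $x_i$. The resulting divisor identities give $\operatorname{div}(f_1) = E_1 + p - (2 + a) x_i$ on $\Gamma_i$, where $E_1$ is the interior part of $E$ inside $\Gamma_i$ and $a \in \mathbb{Z}$ records the slope contribution of $f$ at $x_i$ toward the complement. A short case analysis on $(a, \deg E_1)$, constrained by the total degree identity $\deg E = 2$, the effectivity of both $E_1$ and the analogous piece on $\overline{\Gamma \setminus \Gamma_i}$, and the positive genus of $\Gamma_i$, forces for each $p$ one of the equivalences $2x_i \sim_{\Gamma_i} p + q$, $3x_i \sim_{\Gamma_i} p + E_1'$ with $\deg E_1' = 2$, or $p \sim_{\Gamma_i} x_i$, the last of which fails for generic $p$. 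Since the middle possibility, if realized for a dense set of $p$, places $3x_i$ in $W_3^1(\Gamma_i)$, and since on a hyperelliptic graph a divisor $3z$ has rank $1$ if and only if $z$ is a Weierstrass point (a fact I would verify by reducing $3z - q \sim H_i + (2z - \iota_i(z)) - q$ and analysing via the hyperelliptic involution and the tropical Clifford bound), both non-trivial cases collapse to $2x_i \in W_2^1(\Gamma_i)$.

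Finally, granting the Weierstrass property, conditions (1) and (2) of Definition~\ref{def:D_hyp} for $\tilde{\Gamma}_i$ are immediate. Condition (1) is realized by picking $y$ in the interior of an edge of $\Gamma_i$ not containing $x_i$ and writing $2x_i \sim_\Gamma y + \iota_i(y)$; by Remark~\ref{rk:2cut} the supporting edges form a 2-edge cut of $\Gamma$. Condition (2) is built into the very definition of $\tilde{\Gamma}_i$. Hence $\Gamma_i \subseteq \tilde{\Gamma}_i$ is contained in a $D$-hyperelliptic half, as required.
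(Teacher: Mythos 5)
Your reduction of the statement to the claim that $x_i$ is a Weierstrass point of $\Gamma_i$ (i.e.\ $2x_i\in W_2^1(\Gamma_i)$) is the right one, and your case analysis at the separating vertex in the case $x_i'\neq x_i$ is correct --- indeed it is exactly the content of Lemma~\ref{lm:supp_sep_vert}, which you could simply have cited instead of re-deriving. The genuine gap is in the remaining case $D\sim 3x_i$. There you dispose of the possibility $3x_i\sim_{\Gamma_i}p+E_1'$ by appealing to the claim that on a hyperelliptic metric graph $3z$ has rank $1$ if and only if $z$ is a Weierstrass point. The ``only if'' direction of this claim is false when $\Gamma_i$ has genus $2$: by tropical Riemann--Roch every degree-$3$ divisor on a genus-$2$ graph has rank $1$, so $3x_i\in W_3^1(\Gamma_i)$ no matter where $x_i$ sits, and your case analysis then yields no constraint whatsoever on $x_i$. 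For genus at least $3$ the claim is plausible, but your one-line sketch via $3z\sim H_i+(2z-\iota_i(z))$ only gives the easy ``if'' direction; the converse amounts to showing that the degree-one class $2z-\iota_i(z)$ cannot become effective after adding an arbitrary point unless it is already effective, which is genuine work in the tropical setting and is not supplied.

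For comparison, the paper's proof is a one-paragraph contradiction via Dhar's burning algorithm: assuming $2x_i\notin W_2^1(\Gamma_i)$, one picks $z\in\Gamma_i$ at which the $z$-reduced representative $y_1+y_2\sim_{\Gamma_i}2x_i$ omits $z$, observes that the fire started at $z$ burns all of $\Gamma_i$, propagates along the cycle and reaches $x'$ from both sides, and concludes that $D-z$ is not equivalent to an effective divisor, contradicting $\operatorname{rk}(D)=1$. That argument avoids any statement about ranks of $3z$ on hyperelliptic graphs, but note that it too relies on the extra point $x'$ being consumed ``from the two directions of the cycle'', i.e.\ it is cleanest when $x'\neq x_i$; the configuration $D\sim 3x_i$ with a genus-$2$ component glued at a non-Weierstrass point is precisely the delicate case for both routes, and you should work out explicitly how it is excluded or handled before considering your argument complete.
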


\begin{proof}
    From Lemma \ref{lm:supp_neck}, we have $D\sim 2x+x'$ with $x$ the separating vertex over which $\Gamma_i$ is glued to the cycle. 
    Assume by contradiction that there exists $H\in W_2^1(\Gamma_i)$ such that $H\not\sim 2x.$ In particular, by the uniqueness of the divisor in a hyperelliptic grapf proved in \cite[Theorem 3.6]{L} we have that $2x\not\in W_2^1(\Gamma_i).$ In other words, there exists $z,y_1,y_2\in \Gamma_i$ such that $y_1+y_2\sim 2x$ and the graph burns when we consider the divisor $y_1+y_2$ and we start a fire at $z.$ But then such a fire would spread along the rest of the graph and reach also $x'$ from the two directions of the cycle. The divisor $2x+x'-z\sim y_1+y_2+x'-z$ wouldn't then be linearly equivalent to an effective divisor, giving a contradiction.
\end{proof}

Then, if $\Gamma$ is divisorially trigonal necklace of non-loop hyperelliptic subgraphs, and thus $D$-hyperelliptic halves for some $D\in W_3^1(\Gamma)$, it might not be possible to construct a non-degenerate harmonic morphism of degree $3$ to a tree: properties (H1) and (H2) might not be satisfied.

Nonetheless, here are exceptions for which a non-degenerate harmonic morphism of degree $3$ to a tree or a tree of triangles can still be constructed, up to tropical modifications.

\begin{ex}
    Let $\Gamma$ be a divisorially trigonal necklace such that $\Gamma=\gamma\cup\Gamma_0\cup\bigcup_{i=1}^n\Gamma_i$ with $\Gamma_0$ non-loop hyperelliptic and $\Gamma_i$ are loops.
    Then there exists a non-degenerate harmonic morphism of degree $3$ from a tropical modification of $\Gamma$ to a tree. To get such a morphism it sufficed to repeat the construction in Example \ref{ex:circle_loops}, replacing the harmonic degree $2$ morphism over the non contracted with the harmonic degree $2$ morphism to a tree, determined by $\Gamma_0.$
\end{ex}

\begin{ex}
    Let $\Gamma$ be a divisorially trigonal necklace such that $\Gamma=\gamma\cup\bigcup_{i=1}^3\Gamma_i$ with $\Gamma_i$ are non-loops and hyperelliptic.
    Let $x_i$ denote the separating vertex $\gamma\cap\Gamma_i.$ 
    If $d(x_i,x_j)$ are all equal for $i,j\in\{1,2,3\}$ and $i\neq j,$
    then there exists a non-degenerate harmonic morphism of degree $3$ from a tropical modification of $\Gamma$ to a tree of triangles, as in Example \ref{ex:Luo}.
\end{ex}

\begin{rk}\label{rk:g6}
    Let us notice that, when considering a divisorially trigonal necklace, with arbitrary edge length on the cycle of separating vertices, a suitable morphism might not be constructed if we have at least $2$ hyperelliptic components which are non-loops, thus if the genus of the graph is bigger or equal than $6$.
\end{rk}

\section{Non $D$-half hyperelliptic graphs}\label{sc:non_hyp}
We would like now extend the construction of the morphism to the complement of a $D$-hyperelliptic half in $\Gamma$ with $D\in W_3^1(\Gamma)$ or more generally to $\Gamma$ 
with no $D$-hyperelliptic halves.
Once more, we will assume first that $\Gamma$ is $2$-edge connected and then generalize the construction to the case with bridges.

Let us recall the definition of \emph{admissible representative} from \cite[Definition 17]{MZ25}.
Given $D\in W_3^1(\Gamma),$ an admissible representative $D_x\sim D$ for $D$ with respect to $x\in V(G_{0})$ is $D_x=x+x_1+x_2$ such that $x_1,x_2$ do not belong to the interior of the same edge.

In the $3$-edge connected case, given $D,$ we considered all admissible representative and constructed the non-degenerate harmonic  morphism sending all points in the support of an admissible representative to the same vertex. This morphism was well-defined because, for $3$-edge connected graphs, we have proved that distinct admissible representatives have disjoint supports.

However, unlike the $3$-edge connected case, such property in general does not hold: given two distinct admissible representatived for $D,$ $D_x\sim D_y,$ it is not always true that $\operatorname{Supp}(D_x)\cap\operatorname{Supp}(D_x)=\emptyset.$ 
See for instance the example in Figure \ref{fig:adm_2_conn}.
\begin{figure}[ht]
\begin{tikzcd}
\begin{tikzpicture}
        \path[draw] (0.75,0.5) -- (2.25, 0.5);
        \draw (0.75,0.5) node[anchor=south] {$2$};
        \draw (0,-0.5) node[anchor=north] {$1$};
        \path[draw] (0,-0.5) -- (3,-0.5);
    	\begin{scope}
		\clip (0,-0.5) rectangle ++(0.75,1);
		\draw (0,0.5) ellipse (0.75 and 1);
        \draw (0.75,-0.5) ellipse (0.75 and 1);
	  \end{scope}
   \begin{scope}
		\clip (2.25,-0.5) rectangle ++(0.75,1);
		\draw (2.25,-0.5) ellipse (0.75 and 1);
        \draw (3,0.5) ellipse (0.75 and 1);
	  \end{scope}
   \divisor{0,-0.5}
   \divisor{0.75,0.5}
   \vertex{2.25,0.5}
   \vertex{3,-0.5}
        \end{tikzpicture}\qquad\sim&
    \begin{tikzpicture}
        \path[draw] (0.75,0.5) -- (2.25, 0.5);
        \draw (1.5,-0.5) node[anchor=north] {$1$};
        \draw (0.75,0.5) node[anchor=south] {$1$};
        \draw (2.25,0.5) node[anchor=south] {$1$};
        \path[draw] (0,-0.5) -- (3,-0.5);
    	\begin{scope}
		\clip (0,-0.5) rectangle ++(0.75,1);
		\draw (0,0.5) ellipse (0.75 and 1);
        \draw (0.75,-0.5) ellipse (0.75 and 1);
	  \end{scope}
   \begin{scope}
		\clip (2.25,-0.5) rectangle ++(0.75,1);
		\draw (2.25,-0.5) ellipse (0.75 and 1);
        \draw (3,0.5) ellipse (0.75 and 1);
	  \end{scope}
   \vertex{0,-0.5}
   \divisor{1.5,-0.5}
   \divisor{0.75,0.5}
   \divisor{2.25,0.5}
   \vertex{3,-0.5}
        \end{tikzpicture}\qquad\sim&
    \begin{tikzpicture}      
        \path[draw] (0.75,0.5) -- (2.25, 0.5);
        \draw (3,-0.5) node[anchor=north] {$1$};
        \draw (2.25,0.5) node[anchor=south] {$2$};
        \path[draw] (0,-0.5) -- (3,-0.5);
    	\begin{scope}
		\clip (0,-0.5) rectangle ++(0.75,1);
		\draw (0,0.5) ellipse (0.75 and 1);
        \draw (0.75,-0.5) ellipse (0.75 and 1);
	  \end{scope}
   \begin{scope}
		\clip (2.25,-0.5) rectangle ++(0.75,1);
		\draw (2.25,-0.5) ellipse (0.75 and 1);
        \draw (3,0.5) ellipse (0.75 and 1);
	  \end{scope}
   \vertex{0,-0.5}
   \vertex{0.75,0.5}
   \divisor{2.25,0.5}
   \divisor{3,-0.5}
        \end{tikzpicture}\end{tikzcd}
        
        \caption{Three admissible divisors for $D\in W_3^1(\Gamma)$, whose supports are not disjoint.}\label{fig:adm_2_conn}
\end{figure}
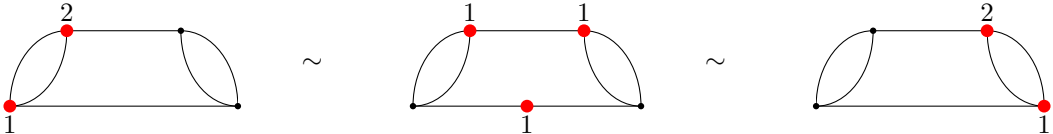

However, given a class of distinct admissible representatives with respect to a given vertex, we will choose the one having maximal coefficients with respect to that vertex.

\begin{defin}
    Let $D\in W_3^1(\Gamma).$
    Denote by $S_x\subset A_D$ the set of admissible representatives for $D$ with respect to $x\in V(G_0).$ 
    We define a maximal admissible divisor $D_x^{\operatorname{max}}$ with respect to $x$ as a divisor in $S_x$ such that $D_x^{\operatorname{max}}(x)=\operatorname{max}\{D'(x); D'\in S_x\}$.
\end{defin}

We will later prove in Lemma \ref{lm:max_unique} that maximal admissible divisors are unique.
Moreover, we will also prove that the edges between two consecutive maximal divisors are a $k$-edge cut, with $k=2,3$.

This, in general is not true, for instance if $k=2$ the support of the divisor is not always entirely contained in a $2$-edge cut. We will see that in this case then there must be a $D$-hyperelliptic half, as shown in Figure \ref{fg:counter_ex}.

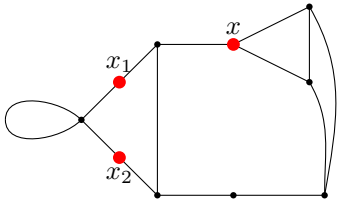
\begin{figure}[ht]
\centering
\begin{tikzcd}
    \begin{tikzpicture}
    \draw[](0,0) --  (1,1);
    \draw[](0,0) --  (1,-1);
    \draw[] (-1,0) to [out=90, in=135] (0,0);
    \draw[] (-1,0) to [out=270, in=225] (0,0);
    \draw(1,1) -- (1,-1); 
    \vertex{0,0}
\draw (0.5,0.5) node[anchor=south] {$x_1$};
\divisor{0.5,0.5}
\divisor{0.5,-0.5}
\draw (0.5,-0.5) node[anchor=north] {$x_2$};
\draw (1,1)--(2,1);
\draw (1,-1)--(2,-1);
\foreach \i in {1,2} {
\foreach \j in {1,-1}{
\vertex{\i,\j}}
}

\draw (2,1) node[anchor=south] {$x$};   
\draw (2,1)--(3,1.5);
\draw (2,1)--(3,0.5);
\draw (2,-1)--(3.2,-1);
\vertex{3,1.5}
\vertex{3,0.5}
\vertex{3.2,-1}
\divisor{2,1}
\draw(3,1.5)--(3,0.5);
\draw(3,0.5)to [out=300, in=90](3.2,-1);
\draw(3,1.5) to [out=300,in=80] (3.2,-1);
    \end{tikzpicture}
\end{tikzcd} \caption{A metric graph $\Gamma$ with a $D$-hyperelliptic half; $D\in W_3^1(\Gamma)$ whose support is not entirely contained in the $2$-cut defined by the edges containing $x_1,x_2$.}\label{fg:counter_ex}
\end{figure}

Moreover, even if the support of the divisor was entirely contained in a $2$-edge cut, it might still not be true that  the edges between two consecutive maximal divisors are a $k$-edge cut, with $k=2,3.$ See for instance Example \ref{ex:Luo}: the divisors $3p_i$, with $p_i$ the vertices of the cycle, there is a unique edge connecting the supports of the two maximal divisors $3p_i$ and $3p_j,$ with $i\neq j,$ which is not an edge-cut.

We will see that this issue arises when we are considering necklaces.
This case will be considered separately in Subsection \ref{ssc:trig_necklace}. In contrast with the case with $D$-hyperelliptic halves, we will show that when there are no hyperelliptic subgraphs, then one can construct a non-degenerate harmonic morphism of degree $3$ to a triangle of trees.

First, let us prove the following result, which also holds for necklaces.
\begin{lem}\label{lm:supp_sep_vert}
    Let $\Gamma$ be a $2$-edge connected metric graph with $D\in W_3^1(\Gamma).$
    If $p\in\Gamma$ is a separating vertex and $D\sim 2p+p'$ with $p'\neq p,$ then $\Gamma$ contains a $D$-hyperelliptic half, which moreover contains the connected component of $\Gamma \setminus \{p\}$ not containing $p'.$
\end{lem}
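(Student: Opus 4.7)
The plan is to set $H := 2p$ and define $\Gamma_1$ as the closure in $\Gamma$ of the set of points appearing in the support of some effective divisor $H'$ with $H' \sim_{\Gamma} 2p$. Since $D - p' \sim 2p = H$, we take the auxiliary point of Definition \ref{def:D_hyp} to be $q = p'$, so condition (2) of that definition holds by construction. Let $C$ denote the connected component of $\Gamma \setminus \{p\}$ not containing $p'$, with closure $\bar C$, and let $\bar C'$ denote the closure of the other component. The core of the proof is to verify that $\bar C \subseteq \Gamma_1$, which will also imply that $H = 2p$ has rank at least $1$ on $\Gamma_1$, using the principle that linear equivalence of divisors supported in $\bar C$ coincides on $\Gamma$ and on $\bar C$ (a consequence of $p$ being a separating vertex).

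For $w \in \bar C$ we must find $y \in \Gamma$ with $2p \sim_{\Gamma} w + y$. The case $w = p$ is immediate. For $w \in C$, the rank-one hypothesis on $D$ gives an effective divisor $E \sim_{\Gamma} D - w$ and a rational function $f$ on $\Gamma$ satisfying $\operatorname{div}(f) = E - (2p + p' - w)$. Let $s_1$ denote the sum of outgoing slopes of $f$ at $p$ along edges in $\bar C$. The degree-zero balance for $\operatorname{div}(f|_{\bar C})$ yields $s_1 = \deg(E|_{\bar C \setminus \{p\}}) + 1 \in \{1, 2, 3\}$. When $s_1 \leq 2$, the coefficient of $p$ in $\operatorname{div}(f|_{\bar C}) + 2p - w$ equals $2 - s_1 \geq 0$ and all other coefficients are $\geq 0$ by effectivity of $E$, so $2p - w$ is equivalent on $\bar C$ to an effective divisor, placing $w$ in $\Gamma_1$.

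The delicate case is $s_1 = 3$, where both chips of $E$ are forced to lie in $\bar C \setminus \{p\}$. Restricting $f$ to $\bar C'$ and using the degree-zero balance shows that $f|_{\bar C'}$ has divisor $p - p'$ on $\bar C'$, so $p \sim_{\bar C'} p'$ and hence $D \sim_{\Gamma} 3p$. Applying the rank-one hypothesis to the equivalent divisor $3p$, one argues, using the $2$-edge connectivity of $\bar C$ and Dhar's burning algorithm at $p$, that for every $w \in C$ the divisor $3p - w$ has an effective representative of the form $p + y$ with $y \in \bar C$, giving $2p \sim_{\Gamma} w + y$ as desired. With $\bar C \subseteq \Gamma_1$ established, condition (1) of Definition \ref{def:D_hyp} is checked by varying $w$ to select an $H' \sim 2p$ whose support lies in interiors of distinct edges of the canonical model, and the hyperelliptic property $H \in W_2^1(\Gamma_1)$ follows via the coincidence of $\sim_{\Gamma}$ and $\sim_{\bar C}$ on divisors supported in $\bar C$. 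The main obstacle is precisely the case $s_1 = 3$, in which the hyperelliptic structure on $\bar C$ must be extracted indirectly from the identity $p \sim_{\bar C'} p'$ rather than directly from the rank-one property of $D$.
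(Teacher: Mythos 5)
Your overall strategy coincides with the paper's: split $\Gamma$ at the separating vertex $p$ into the two closed pieces $\bar C$ and $\bar C'$, restrict the rational function witnessing $D-w\sim E$ (with $\deg E=2$) to each piece, and run a trichotomy on how many chips of $E$ land in $\bar C\setminus\{p\}$ (your $s_1\in\{1,2,3\}$ is exactly the paper's $\alpha\in\{1,0,-1\}$). The cases $s_1\le 2$ are handled correctly and give $2p-w\sim_{\bar C}(2-s_1)p+E|_C\ge 0$, which is what is needed.

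The gap is in the branch $s_1=3$. Having derived $p\sim_{\bar C'}p'$, you should stop: this is a contradiction, not a case to be handled. Since $\Gamma$ is $2$-edge connected and $p$ is separating, $\bar C'$ is itself connected and bridgeless, and it contains the two distinct points $p$ and $p'$; on such a graph two distinct points are never linearly equivalent (equivalently, the Abel--Jacobi map is injective on a bridgeless metric graph, or: the max-set of a function with divisor $p-p'$ would have a single outgoing edge germ, producing a bridge). This is precisely how the paper disposes of its $\alpha=-1$ case (``not possible since $p'\neq p$ \dots and $\Gamma_1,\Gamma_2$ are $2$-edge connected''). Instead, you accept the equivalence, deduce $D\sim 3p$, and then assert that ``using the $2$-edge connectivity of $\bar C$ and Dhar's burning algorithm at $p$'' every representative of $3p-w$ can be taken of the form $p+y$ with $y\in\bar C$. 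That assertion is not proved and does not follow from the cited tools: a representative $E'$ of $3p-w$ could a priori place both of its chips in $C$, in which case restriction to $\bar C$ yields only $3p\sim_{\bar C}E'|_C+w$ and gives no effective representative of $2p-w$, i.e., you land back in the same unresolved configuration. So as written the $s_1=3$ branch is not closed; the lemma survives only because that branch is vacuous, which your argument does not establish. The fix is one line (rule the case out), after which your proof matches the paper's. A smaller remark: your closing claims that condition (1) of Definition \ref{def:D_hyp} can be arranged ``by varying $w$'' and that $H\in W_2^1(\Gamma_1)$ follows are asserted rather than argued, but the paper is equally terse at that point, so I would not count this against you.
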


\begin{proof}
    Since $p$ is a separating vertex, then there are two $2$-edge connected subcurves $\Gamma_1,\Gamma_2$ whose intersection is $p.$ Assume that $p'\in\Gamma_2$, as in Figure \ref{fg:sep_vert}.
    
\begin{figure}[ht]
\centering
\begin{tikzcd}
    \begin{tikzpicture}
\draw (1.5,0) circle (0.5);
\draw (0,0) circle (1);
\draw (0,0) node[] {$\Gamma_2$};
\draw (1.5,0) node[] {$\Gamma_1$};
\draw (1,0) node[anchor=east] {$2p$};
\draw (-0.5,0.1) node[] {$p'$};
\divisor{1,0}
\divisor{-0.5,0.5}
    \end{tikzpicture}
\end{tikzcd} \caption{The graph $\Gamma$ with $D=2p+p'\in W_3^{1}(\Gamma).$}\label{fg:sep_vert}
\end{figure}
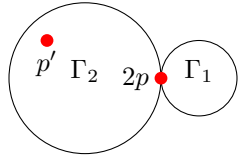

The rank of $D\sim 2p+p'$ is $1,$ therefore for any $w\in \Gamma,$ there exists $u,v\in\Gamma$ and a rational function $f$ such that
$$\operatorname{div}(f)=2p+p'-w-u-v.$$

Pick $w\in \Gamma_1;$ $,w\neq p$ then we can consider the restriction of $f$ over $\Gamma_1,\Gamma_2$ which defines principal divisors

\begin{equation}\label{eq:prin1}
\operatorname{div}(f|_{\Gamma_1})=(2-\alpha) p-w-\delta_1u-\delta_2v,
\end{equation}
\begin{equation}\label{eq:prin2}
\operatorname{div}(f|_{\Gamma_2})=\alpha p+p'+(\delta_1-1)u+(\delta_2-1)v,
\end{equation}
for some $\alpha\in\mathbb Z,$ $\delta_i\in\{0,1\}$ such that 
$
1-\alpha=\delta_1+\delta_2\in \left[0,2\right].
$

Then we have $\alpha\in\{-1,0,1\}.$
For such values of $\alpha,$ the principal divisors \eqref{eq:prin1}, \eqref{eq:prin2} give respectively linear equivalences 
\begin{equation*}
    \begin{cases}
        3p\sim w+u+v \text{ in }\Gamma_1,\\
        p\sim p' \text{ in }\Gamma_2,
    \end{cases}\qquad\text{or}\qquad
    \begin{cases}
        2p\sim w+u \text{ in }\Gamma_1,\\
        p'\sim v \text{ in }\Gamma_2,
    \end{cases}
    \qquad\text{or}\qquad
    \begin{cases}
        p\sim w \text{ in }\Gamma_1,\\
        p+p'\sim u+v \text{ in }\Gamma_2,
    \end{cases}
\end{equation*}
The first and last cases are not possible since $p'\neq p$ and $w\neq p$ by assumption and $\Gamma_1,\Gamma_2$ are $2$-edge connected. Then the only possibility is that $v=p'$ and for any $w\in \Gamma_1,$ we have that there exists some $u\in\Gamma_1$ such that $2p-w\sim u,$ which proves that $\Gamma_1$ will be contained in some $D$-hyperelliptic half.
\end{proof}

From now on, $\Gamma$ is a $2$-edge connected metric graph, which is not a necklace. Let $D=x+y+z\in W_3^1(\Gamma)$ such that $x,y$ are contained in a $2$-edge cut $\mathcal E.$ 
We want to prove that $z\in \overline{\mathcal E},$ unless there is a $D$-hyperelliptic half. 

\begin{prop}\label{prop:2cut}
Let $\Gamma$ with $D=x+y+z\in W_3^1(\Gamma)$ such that $x,y$ are contained in $e_1,e_2$ respectively, with $\mathcal E=\{e_1,e_2\}$ a $2$-edge cut. If $z\notin\overline{\mathcal E},$ then $\Gamma$ contains a $D$-hyperelliptic half.
\end{prop}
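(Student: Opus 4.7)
The plan is to exhibit an explicit $D$-hyperelliptic half. Denote the two connected components of $\Gamma\setminus\mathcal{E}$ by $C_1,C_2$ and assume, without loss of generality, that $z\in C_2\setminus\overline{\mathcal{E}}$. Set $H:=x+y$, $p:=z$, and take
\[
\Gamma_1 := \bigcup_{\substack{H'\geq 0 \\ H'\sim_\Gamma H}} \operatorname{Supp}(H').
\]
Condition (2) of Definition \ref{def:D_hyp} then holds by construction, and the sliding lemma on the $2$-edge cut $\{e_1,e_2\}$ (cf.\ Remark \ref{rk:2cut}) ensures $e_1\cup e_2\subseteq\Gamma_1$, so that $e_1,e_2\in E(G_1)$ are distinct edges with $x\in\mathring{e_1}$, $y\in\mathring{e_2}$, giving condition (1). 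It then remains to establish $\Gamma_1\subseteq\overline{C_1}\cup e_1\cup e_2$ (so that $z\notin\Gamma_1$ and $p=z\in\overline{\Gamma\setminus\Gamma_1}$) together with $H\in W_2^1(\Gamma_1)$.

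For the inclusion $\Gamma_1\subseteq\overline{C_1}\cup e_1\cup e_2$: any effective $H'\sim_\Gamma H$ whose support met $C_2\setminus\overline{\mathcal E}$ would, via the slope-balance computation across the cut, produce a degree-$2$ divisor of rank at least $1$ supported in $\overline{C_2}$, forcing $\overline{C_2}$ to be hyperelliptic; combining this with the cut $\{e_1,e_2\}$ and the rank-$1$ divisor $D$ contradicts either the non-hyperellipticity of $\Gamma$ or, via the arguments of Lemmas \ref{lm:necklace1} and \ref{lm:necklace2}, the hypothesis that $\Gamma$ is not a necklace.

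The heart of the proof is $H\in W_2^1(\Gamma_1)$. Given $w\in\Gamma_1$, use $D\in W_3^1(\Gamma)$ to find an effective $E\sim_\Gamma D-w$ of degree $2$. The key claim is that $E$ can be chosen with $E\geq z$. To see this, start Dhar's burning algorithm at $w$ for the divisor $D$: the fire consumes $\overline{C_1}$ and reaches the $2$-valent interior points $x\in\mathring{e_1}$ and $y\in\mathring{e_2}$ along one direction each; since $D(x)=D(y)=1$ matches the in-degree $1$ from the burnt side, both $x$ and $y$ are saved, and the fire is blocked at the cut, so all of $\overline{C_2}$ is saved. Because $z\notin\overline{\mathcal E}$, the chip at $z$ sits strictly in the interior of the saved region $\overline{C_2}$ with no boundary of the saved region incident to it, and therefore no firing used to reduce $D$ to the $w$-reduced divisor $D_w$ can dislodge it; hence $D_w(z)\geq 1$, and $E:=D_w-w$ satisfies $E\geq z$. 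Then $E-z\geq 0$ is effective of degree $1$, and restricting a rational function $f$ with $\operatorname{div}(f)=D-w-E$ to $\Gamma_1$ yields a rational function on $\Gamma_1$ whose divisor is $x+y-w-(E-z)-s_1v_1'-s_2v_2'$, where $s_1,s_2$ are the slopes of $f$ at the $C_2$-side endpoints $v_1',v_2'$ of $e_1,e_2$; these boundary slope terms correspond precisely to sliding along $\{e_1,e_2\}$ and can be absorbed by replacing $E-z$ with a sliding representative in $e_1\cup e_2\subseteq\Gamma_1$, giving $H-w\sim_{\Gamma_1}(E-z)'$ effective and hence $H\in W_2^1(\Gamma_1)$.

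The main obstacle is the Dhar-burning bookkeeping showing that the chip at $z$ truly persists through the iterated firings that push the chips originally at $x,y$ toward $w$, together with verifying that the boundary slope terms $s_1v_1'+s_2v_2'$ can indeed be absorbed via sliding while keeping the representative effective. Both rely on the combined hypotheses that $z\notin\overline{\mathcal E}$ (so $z$ is never on the boundary of any saved region arising in the reduction) and that $\Gamma$ is not a necklace (ruling out the configurations of Examples \ref{ex:Luo} and \ref{ex:neck} in which the chip at $z$ could itself migrate across the graph via a cycle of separating vertices).
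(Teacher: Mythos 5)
Your overall strategy --- take $H=x+y$, $p=z$, and let $\Gamma_1$ be the union of supports of the effective divisors equivalent to $H$ --- is a reasonable target, but the two steps you yourself flag as ``the main obstacle'' are exactly where the proof lives, and the first of them is not merely unfinished: as stated it is false. Your Dhar analysis only covers the first burning iteration, where indeed the fire is blocked at $x\in\mathring{e_1}$ and $y\in\mathring{e_2}$ and $z$ sits in the interior of the saved region. But the reduction to $D_w$ iterates: after firing, the chips at $x,y$ migrate toward $w$ and eventually reach vertices of the component $\Gamma_1$ on the $w$-side of the cut. At that point a chip sitting at a vertex of valence $\geq 3$ can be burnt (two or more burnt incoming directions against a single chip), and the fire then passes through $e_1$ into the other component and reaches $z$. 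The rank-$1$ hypothesis does not prevent this; it only forces the fire to stop at $z$ from a single direction, i.e.\ it produces a new $2$-edge cut $\{e_2,e_z\}$ --- and the very next firing moves the chip at $z$. This is precisely the case ``the fire burns $x_0$ and then stops at $y,z$'' that the paper's proof treats explicitly; the paper then slides $y$ and $z$ along the new cut and iterates, concluding either that $D\sim 2p+p'$ with $p$ a separating vertex (whence Lemma \ref{lm:supp_sep_vert} applies) or that the $w$-side component contains pairs of equal-length edges forming $2$-edge cuts that support moving pairs of chips, which is where the hyperelliptic structure actually comes from. So the assertion $D_w(z)\geq 1$ for every $w\in\Gamma_1$ cannot be taken as bookkeeping; refuting or circumventing its failure is the content of the proposition.

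The second gap is the containment $\Gamma_1\subseteq\overline{C_1}\cup e_1\cup e_2$, needed so that $p=z$ lies in $\overline{\Gamma\setminus\Gamma_1}$. Your argument is that an effective $H'\sim H$ meeting $C_2$ would force $\overline{C_2}$ to be hyperelliptic and thereby contradict non-hyperellipticity of $\Gamma$ or the non-necklace hypothesis. Neither implication is established: the support of $|x+y|$ genuinely can propagate past the $C_2$-endpoints of $e_1,e_2$ whenever further $2$-edge cuts continue the sliding (compare Figures \ref{fg:D_half2} and \ref{fg:counter_ex}, where the union of supports extends beyond the cut), and the existence of a hyperelliptic subgraph on the $C_2$ side is not a contradiction --- the conclusion of the proposition is precisely that some $D$-hyperelliptic half exists, possibly with a different choice of $H$ and of the point $p$. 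To repair the proof you would need to either control where the sliding of $x+y$ terminates (which again comes down to the cut-and-separating-vertex analysis the paper carries out) or abandon the fixed choice $p=z$ and locate the hyperelliptic half where the iterated Dhar reduction actually places it.
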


\begin{proof}
Let $\Gamma_1,\Gamma_2$ be the connected component of $\Gamma\setminus \mathcal E,$ with $z\in \Gamma_2,$ as in Figure \ref{fg:cut}.  

\begin{figure}[ht]
\centering
\begin{tikzcd}
\begin{tikzpicture}
\vertex{2.2,-0.5}
\draw (2.2,-0.5) node[anchor=south] {$z$};
\draw[](0,0) --  (0.5,0);
\draw[](1,0) --  (0.5,0);
\draw (0.25,0) node[anchor=south west] {$x$};
\draw[](-0.5,-2) -- (0.5,-2);
\draw[] (1.5,-2) -- (0.5,-2);
\draw (0,-2) node[anchor=north] {$y$};

\vertex{0.25,0}\vertex{0,-2}
\draw (0,0) to [out=260, in=45] (-0.25,-1);
\draw [](-0.25,-1) to [out=225, in=80] (-0.5,-2);
\draw [](0,0) to [out=80, in=360] (-0.25,0.25);
\draw [](-0.25,0.25) to [out=180, in=120] (-1,-0.5);
\draw [](-1,-0.5) to [out=300, in=45] (-1.5,-1.5);
\draw [](-1.5,-1.5) to [out=225, in=180] (-0.7,-2.2);
\draw [](-0.7,-2.2) to [out=360, in=260] (-0.5,-2.);
\draw (-0.6,-1) node[] {$\Gamma_1$};
\draw [](1,0) to [out=100, in=170] (2,0.2);
\draw[](2,0.2) to [out=350, in=120] (3,-0.5);
\draw [](3,-0.5) to [out=300, in=80] (2.8,-1.8);
\draw [](2.8,-1.8) to [out=260, in=10] (2.5,-2.2);
\draw [](2.5,-2.2) to [out=190, in=280] (1.5,-2);
\draw [](1.5,-2) to [out=100, in=250] (1.7,-1);
\draw [](1.7,-1) to [out=70, in=280] (1,0);
\draw (2.2,-1) node[] {$\Gamma_2$};
\end{tikzpicture}\end{tikzcd}\caption{}\label{fg:cut}
\end{figure}

First, let us suppose that in $\Gamma_1\cup\{e_1,e_2\}$ there is a unique path connecting $x$ and $y$ and that along that path all vertices are separating (notice that this also includes the case where $e_1,e_2$ share an endpoint in $\Gamma_1$).
Indeed, if $\Gamma_1$ is as in Figure \ref{fg:2cut}, we can write $D\sim x_0+y+z,$ with $x_0\in V(G_0)$ a separating vertex.

 \begin{figure}[ht]
\centering
\begin{tikzcd}
\begin{tikzpicture}
\vertex{2.2,-0.5}
\draw (2.2,-0.5) node[anchor=south] {$z$};
\draw[](0,0) --  (0.5,0);
\draw[](1,0) --  (0.5,0);
\draw (0,0) node[anchor=north west] {$x_0$};
\draw[](-0.5,-2) -- (0.5,-2);
\draw[] (1.5,-2) -- (0.5,-2);
\vertex{0,0}
\draw [](0,0) to [out=190, in=90] (-1,-1);
\draw [](-1,-1) to [out=270, in=170] (-0.5,-2);
\draw [](-0.15,0.35) circle (0.37);
\draw [](-1.5,-1) circle (0.5);
\draw (-0.5,-2.3) circle (0.3);
\draw [](1,0) to [out=100, in=170] (2,0.2);
\draw[](2,0.2) to [out=350, in=120] (3,-0.5);
\draw [](3,-0.5) to [out=300, in=80] (2.8,-1.8);
\draw [](2.8,-1.8) to [out=260, in=10] (2.5,-2.2);
\draw [](2.5,-2.2) to [out=190, in=280] (1.5,-2);
\draw [](1.5,-2) to [out=100, in=250] (1.7,-1);
\draw [](1.7,-1) to [out=70, in=280] (1,0);
\draw (2.2,-1) node[] {$\Gamma_2$};
\draw (0,-2) node[anchor=north] {$y$};
\vertex{0,-2}
\end{tikzpicture}
\end{tikzcd}\caption{} \label{fg:2cut}
\end{figure}

Starting Dhar's burning algorithm from the component glued at $x_0$ would then burn the whole graph unless either $y=x_0$ or the fire stops at $y,z$, by $2$-edge connectivity of $\Gamma$. 
If $y=x_0$, there must be a $D$-hyperelliptic half by Lemma \ref{lm:supp_sep_vert}.
If instead the fire stops at 
$y,z,$ then there is an edge $e$ containing $z$ or $e\in E_z(G_0)$ such that $\{e_2,e\}$ is a $2$-edge cut, as in Figure \ref{fg:2cut2}. Then we could consider the rational function with slope $-1$ from $y,z,$ on $e_2,e,$ respectively, towards the burnt direction until the first point reaches a vertex of the canonical model. We can repeat this argument and obtain linear equivalences $x_0+y+z\sim x_0+y_i+z_i$ where $y_i\in V(G_0)$ or $z_i\in V(G_0),$ and $y_i$ are contained in the unique path from $x_0$ to $y$, and $z_i$ are contained in a path in $\Gamma_2\cup{e_1}$ from $x_0$ to $z.$ Either $D\sim 2x_0+z'$ for some $z'\neq x_0$ or the path from $x_0$ to $z$ is also unique. This determines a unique path from $z$ to $y,$ intersecting $\Gamma_1,$ such that along that path all vertices are separating. Then $D\sim 2p+p'$ for some $p\neq p'$ and $p$ a separating vertex, hence Lemma \ref{lm:supp_sep_vert} applies again and there is a $D$-hyperelliptic half.

Let us now assume that in $\Gamma_1\cup\{e_1,e_2\}$ there is no unique path connecting $x,y$ such that along such path all vertices are separating.
Without loss of generality then we may assume $D\sim x_0+y+z$ with $x_0\in V(G_0)$ not a separating vertex. Pick $w\in V(G_0)$ a point on a path in $\Gamma_1$ between $x_0,y$ and start Dhar's burning algorithm from there. 
Since $D$ has rank $1$ the graph cannot burn entirely: either the fire stops at $x_0,y$ or it burns one vertex of the canonical model, say $x_0,$ and then stops at $y,z$.
In the following, we show that we can always write $D\sim x_0+y_0+z$ with $y_0 \in V(G_0),$ and that there exists $w'\in \Gamma_1$ such that if we start a fire from there, it stops at $x_0,y_0,$ thus identifying a new $2$-edge cut.

\begin{itemize}
    \item If the fire stops at $x_0,y,$  
    since $x_0$ is not a separating vertex and the fire does not burn $x_0$ there must be a unique path from $x_0$ to $y$ containing $w$ and the only possibility is that $y\in V(G_0).$

    \item If the fire instead burns $x_0,$ then it must stops at $y,z$.
    In particular $y,z$ identify a $2$-edge cut $\{e_2,e_z\}$
    as in Figure \ref{fg:2cut2}. We consider the rational function with slope $-1$ from $y,z,$ on $e_2,e_z,$ respectively, towards the burnt direction until the first point reaches a vertex, which is not a separating vertex along the corresponding path.
    \begin{figure}[ht]
\centering
\begin{tikzcd}
\begin{tikzpicture}
\vertex{2.2,-0.5}
\draw(2,-0.3)--(2.4,-0.7);
\draw (2.2,-0.5) node[anchor=north east] {$z$};
\draw[](0,0) --  (0.5,0);
\draw[](1,0) --  (0.5,0);
\draw (0,0) node[anchor=south west] {$x_0$};
\draw[](-0.5,-2) -- (0.5,-2);
\draw[] (1.5,-2) -- (0.5,-2);
\draw (0,-2) node[anchor=north] {$y$};

\vertex{0,0}\vertex{0,-2}
\draw (0,0) to [out=260, in=45] (-0.25,-1);
\draw [](-0.25,-1) to [out=225, in=80] (-0.5,-2);
\draw [](0,0) to [out=80, in=360] (-0.25,0.25);
\draw [](-0.25,0.25) to [out=180, in=120] (-1,-0.5);
\draw [](-1,-0.5) to [out=300, in=45] (-1.5,-1.5);
\draw [](-1.5,-1.5) to [out=225, in=180] (-0.7,-2.2);
\draw [](-0.7,-2.2) to [out=360, in=260] (-0.5,-2.);
\draw (-0.6,-1) node[] {$\Gamma_1$};
\draw [](1,0) to [out=100, in=160] (2,0.2);
\draw[](2,0.2) to [out=340, in=20] (2,-0.3);
\draw [](2,-0.3) to [out=200, in=280] (1,0);
\draw [](2.4,-0.7) to [out=30, in=80] (2.8,-1.8);
\draw [](2.8,-1.8) to [out=260, in=10] (2.5,-2.2);
\draw [](2.5,-2.2) to [out=190, in=280] (1.5,-2);
\draw [](1.5,-2) to [out=100, in=210] (2.4,-0.7);
\end{tikzpicture}&\begin{tikzpicture}
\vertex{2,-0.3}
\draw(2,-0.3)--(2.4,-0.7);
\draw (2,-0.3) node[anchor=north east] {$z_0$};
\draw (1.6,-0.1) node[] {$\Gamma_0$};
\draw[](0,0) --  (0.5,0);
\draw[](1,0) --  (0.5,0);
\draw (0,0) node[anchor=south west] {$x_0$};
\draw[](-0.5,-2) -- (0.5,-2);
\draw[] (1.5,-2) -- (0.5,-2);
\draw (-0.1,-2) node[anchor=south] {$y'$};
\draw (-0.5,-2) node[anchor=south east] {$y_0$};
\vertex{0,0}\vertex{-0.1,-2}\vertex{-0.5,-2}
\draw (0,0) to [out=260, in=45] (-0.25,-1);
\draw [](-0.25,-1) to [out=225, in=80] (-0.5,-2);
\draw [](0,0) to [out=80, in=360] (-0.25,0.25);
\draw [](-0.25,0.25) to [out=180, in=120] (-1,-0.5);
\draw [](-1,-0.5) to [out=300, in=45] (-1.5,-1.5);
\draw [](-1.5,-1.5) to [out=225, in=180] (-0.7,-2.2);
\draw [](-0.7,-2.2) to [out=360, in=260] (-0.5,-2.);
\draw (-0.6,-1) node[] {$\Gamma_1$};
\draw [](1,0) to [out=100, in=160] (2,0.2);
\draw[](2,0.2) to [out=340, in=20] (2,-0.3);
\draw [](2,-0.3) to [out=200, in=280] (1,0);
\draw [](2.4,-0.7) to [out=30, in=80] (2.8,-1.8);
\draw [](2.8,-1.8) to [out=260, in=10] (2.5,-2.2);
\draw [](2.5,-2.2) to [out=190, in=280] (1.5,-2);
\draw [](1.5,-2) to [out=100, in=210] (2.4,-0.7);
\end{tikzpicture}
\end{tikzcd}\caption{} \label{fg:2cut2}
\end{figure}

This yields $D\sim x_0+y_0+z',$ with $y_0\in V(G_0)$ not a separating vertex on a path. 
Indeed, if instead $x_0+y+z\sim x_0+y'+z_0$ with $z_0\in V(G_0),$ and $y'\notin V(G_0),$ as in the right picture in Figure \ref{fg:2cut2}, then $z_0$ must be a separating vertex and in particular there is a bridge in $\Gamma_0,$ incident to $z_0,$ over which the fire stops, otherwise $z_0$ and the whole graph would burn. If $z_0$ is a separating vertex on a path, then we could repeat the argument and get $x_0+y+z\sim x_0+y_0+z'$. Notice indeed that $x_0+y+z\not\sim 2x_0+y'$ by assumption. 

We have thus determined a linear equivalence with $D\sim x_0+y_0+z',$ where $x_0,y_0$ are now both vertices such that any edge in $\Gamma_1$ incident to them is not a bridge.
In particular starting Dhar's burning algorithm $\Gamma_1$ at some $w'$ in some path connecting $x_0,y_0$ will produce a fire which must stops at $x_0,y_0$ and thus identifying a $2$-edge cut.
\end{itemize}

Notice that in both cases $x_0,y_0$ are connected in $\Gamma_1$ via paths which are not bridges. Thus any edge in $\Gamma_1$ cannot form a $2$-edge cut with the edge identified by $z'.$

Repeating the above argument on the new $2$-edge cut, will then yield $x_0+y_0\sim x_1+y_1$ for some $x_1,y_1\in V(G_0)$ (if there are any other vertices in $\Gamma_1$). In other words, $\Gamma_1$ contains pairs of edges of same length, forming a $2$-edge cut and supporting two points of the divisor, hence it contains hyperelliptic subgraph and thus identifies a $D$-hyperelliptic half.
\end{proof}

\begin{prop}\label{prop:k_cut}
Let $\Gamma$ be a $2$-edge connected metric with $D\in W_3^1(\Gamma)$ and no $D$-hyperelliptic halves. Assume that its canonical model $(G_0,l_0)$ is such that $|V(G_0)|>3$ and let $D_x=x+x_1+x_2$ be an admissible representative for $D.$ 

Then there exists $e_{x}\in E_x(G_0),$ and $e_1,e_2\in E(G_0)$ such that $x_i\in e_i;$ $i=1,2$ and $\{e_x,e_1,e_2\},$ is a $k$-cut; $k=2,3.$ 
\end{prop}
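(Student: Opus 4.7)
The plan is to combine Dhar's burning algorithm, started from a point near $x$, with Proposition \ref{prop:2cut} and the hypothesis that $\Gamma$ has no $D$-hyperelliptic half. The burning will produce the desired cut as the set of edges at which the fire halts, while the absence of hyperelliptic halves will be used to rule out degenerate alternative stopping configurations and to force the right vertex-of-the-model to sit at one end of the would-be cut.

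\textbf{Key steps.} I would split into two cases according to whether the edges $e_1\ni x_1$ and $e_2\ni x_2$ already form a $2$-edge cut of $\Gamma$. If they do, Proposition \ref{prop:2cut}, applied with $x$ playing the role of $z$, forces $x\in\overline{e_1}\cup\overline{e_2}$; since $x\in V(G_0)$ and $x_1,x_2$ lie in the interiors of $e_1,e_2$, this means $x$ is an endpoint of one of the two edges, so taking $e_x$ to be that edge yields the required $2$-cut. In the complementary case, $\{e_1,e_2\}$ is not a $2$-edge cut. For each $e\in E_x(G_0)$ I would consider a point $q\in\mathring{e}$ close to $x$ and the $q$-reduced representative $D^{(q)}$ of $D$; by the rank-$1$ property, $D^{(q)}$ is effective with $D^{(q)}(q)\geq 1$, and as $q\to x$ along $e$ it limits to an admissible representative of $D$ with $x$ in its support. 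Using the admissibility of $D_x$ together with $|V(G_0)|>3$, I would single out an incident edge $e_x$ for which this limiting representative equals $D_x$ itself. For such $e_x$, Dhar's algorithm from $q$ burns the whole graph except closed neighborhoods of $x_1$ and $x_2$, so the last burnt edges are contained in $\{e_x,e_1,e_2\}$, which is consequently an edge cut of cardinality at most $3$. One gets $k=2$ precisely when $e_x$ coincides with $e_1$ or with $e_2$ (which happens when $x_1=x$ or $x_2=x$), and $k=3$ otherwise.

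\textbf{Main obstacle.} The delicate point is verifying, in the second case, that the fire halts \emph{exactly} at the support of $D_x$ rather than at some different linearly equivalent representative $D'\sim D$. Any such alternative stopping configuration would produce two support points inside the interior of a $2$-edge cut, and then Proposition \ref{prop:2cut} would force $\Gamma$ to contain a $D$-hyperelliptic half, contradicting the hypothesis. Making this exclusion precise, in particular when $x$ has high valence or when $\Gamma$ contains several internally disjoint paths joining $x_1$ and $x_2$, is where I expect the bulk of the technical work to lie; the hypothesis $|V(G_0)|>3$ is exactly what rules out the small-graph exceptions allowed in the main theorem, and will enter here to guarantee that the edge $e_x$ produced by the limiting argument is genuinely distinct from the degenerate configurations.
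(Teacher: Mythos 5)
Your overall skeleton---run Dhar's burning algorithm from a point next to $x$ on an edge avoiding $x_1,x_2$, read the cut off from the last burnt edges, and invoke Proposition \ref{prop:2cut} when only two edges appear---is exactly the paper's argument, and your Case A is a clean special instance of it. The gap is in Case B, in the mechanism you use to select $e_x$. You ask for an edge $e\in E_x(G_0)$ along which the limit of the $q$-reduced representatives $D^{(q)}$ equals the given $D_x$. The $q$-reduced divisor is unique and varies continuously in $q$, so its limit as $q\to x$ is the $x$-reduced divisor, the same for every incident edge, and it maximizes the coefficient at $x$ among effective representatives; for an admissible representative $D_x$ that is not $x$-reduced (which the hypothesis allows), no edge $e_x$ with the property you want exists and the argument stalls. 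The detour is also unnecessary: the right move (and the paper's) is to run Dhar's algorithm against the fixed divisor $D_x$ itself, starting from a vertex $w$ adjacent to $x$ along an edge whose interior avoids $x_1,x_2$ --- such an edge exists because $x$ has valence at least $3$ in the canonical model while $x_1,x_2$ occupy at most two of its incident edges, and this is where $|V(G_0)|>3$ enters. Since $\operatorname{rk}(D)\geq 1$ and the starting point lies outside $\operatorname{Supp}(D_x)$, the fire cannot burn everything, each point $p$ of the support absorbs at most $D_x(p)$ incoming burnt directions, and the last burnt edges form a cut of size $k\leq 3$ whose edges contain the stopping points.

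Two further inaccuracies. First, the claim that $k=2$ occurs precisely when $x_1=x$ or $x_2=x$ is false: $k=2$ also occurs when the fire reaches $x$ from two directions and burns through it, so that only $x_1,x_2$ stop it, or when one of the three points lies strictly inside the unburnt region; these are exactly the configurations in which Proposition \ref{prop:2cut} must be invoked to force the remaining point into the closure of the resulting $2$-edge cut, which is how the paper concludes. Second, the ``main obstacle'' you identify --- that the fire might halt at a different representative $D'\sim D$ --- is a misdiagnosis: Dhar's algorithm is run against one fixed divisor and only that divisor's support can stop the fire. The genuinely delicate point is whether $x$ itself ends up on the boundary of the burnt region, so that some $e_x\in E_x(G_0)$ lands in the cut; when it does not, you are in the $k=2$ case and Proposition \ref{prop:2cut} supplies the missing incidence.
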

\begin{proof}
By definition of admissible representative, we have $x\in V(G_0)$ and $x_i$ not in the interior of the same edge. 
Since $|V(G_0)|>3$ there is a vertex $w$ which is adjacent to $x$ such that the edge $xw$ does not contain $x_1,x_2.$ If we start Dhar's burning algorithm from $w,$ then there is at most one edge path to each of $x_1,x_2$ and $x$, counted with multiplicity, otherwise the graph would burn all.
The number of the disjoint paths over which the fire stops is $k$ and $\mathcal{E}$ is the set given by the last edges of such paths. 

If $k=3,$ the statement clearly follows. 
If instead  $k=2$ the claim follows from Proposition \ref{prop:2cut}.
\end{proof}

\begin{lem}\label{lm:length2}
    Let $\Gamma$ be a $2$-edge connected metric graph, with canonical model $(G_0,l_0),$ which is not a necklace with $D\in W_3^1(\Gamma).$ Suppose that there are no $D$-hyperelliptic halves and the support of $D$ is contained in a $2$-edge cut $\{e_1,e_2\}$. 
    Then $D\sim 2x_0+y$ with $x_0\in V(G_0),$ $x_0\in \overline{e}_1$  $y\in \overline{e}_2$ and $2l_0(e_1)\leq l_0(e_2).$ Moreover, equality holds if $e_1,e_2$ are parallel edges and in this case $D\sim 3x_0$. 
\end{lem}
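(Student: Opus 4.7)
The plan is to proceed in three steps: normalize $D$ to the stated form using sliding in the 2-edge cut, extract the length inequality from the rank-one condition, and specialize to the parallel case.

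First, let $\Gamma_1,\Gamma_2$ be the two connected components of $\Gamma\setminus\{e_1,e_2\}$ with $A_i\in\Gamma_1,B_i\in\Gamma_2$ the endpoints of $e_i$ (identified when $e_1,e_2$ are parallel), and write $L_i=l_0(e_i)$. The 2-edge cut provides the basic sliding equivalence $(A_1+a)+(A_2+a)\sim A_1+A_2$ for $a\in[0,\min(L_1,L_2)]$, produced by a rational function constant on each $\Gamma_j$ with a single break on each of $e_1,e_2$; this move rigidly translates a pair consisting of one chip on $e_1$ and one on $e_2$. Starting from $D\sim p_1+p_2+p_3$ supported on $\overline{e_1}\cup\overline{e_2}$, relabel (if needed) so that two chips lie on $\overline{e_1}$, then iteratively slide each $\overline{e_1}$-chip toward an endpoint of $e_1$, compensating on $e_2$; when the required compensation on $e_2$ overruns one end, switch to the other endpoint of $e_1$. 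The only obstruction is the simultaneous failure of consolidation in both directions; in that situation the positional constraints produce a 2-edge cut inside $\Gamma$ supporting a degree-$2$ rank-$1$ divisor, i.e.\ a $D$-hyperelliptic half, contradicting the hypothesis. This yields $D\sim 2x_0+y$ with $x_0\in V(G_0)\cap\overline{e_1}$ and $y\in\overline{e_2}$.

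Next, take $x_0=A_1$ and arrange $y=A_2$ by a further slide, so $D\sim 2A_1+A_2$. Since $\operatorname{rk}(D)\geq 1$, the divisor $D-B_1$ is linearly equivalent to an effective divisor, which we realize by the rational function $g$ that is constant on $\Gamma_1$ and on $\Gamma_2$, has slope $-2$ on $e_1$ (so $g(B_1)-g(A_1)=-2L_1$), and on $e_2$ has slope $-1$ from $A_2$ up to a breakpoint $z'=A_2+c$, and then slope $0$ to $B_2$ (so $g(B_2)-g(A_2)=-c$). A direct computation of $\operatorname{div}(g)=2B_1+z'-2A_1-A_2$ gives $D\sim 2B_1+z'$; constancy of $g$ on the components forces $c=2L_1$, and the requirement $z'\in\overline{e_2}$ gives $c\leq L_2$, i.e., $2l_0(e_1)\leq l_0(e_2)$. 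Ruling out alternative target divisors $E$ for $D-B_1\sim E$ (by enumerating where the slope changes can occur relative to the 2-cut) confirms that this length constraint is necessary and not just sufficient.

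In the parallel case $A_1=A_2=u$ and $B_1=B_2=v$, the above yields $D\sim 3u$ and, under $2L_1\leq L_2$, also $D\sim 2v+(u+2L_1)_{e_2}$. Applying the rank-one condition to points $w=(u+s)_{e_2}$ with $s$ just exceeding $2L_1$, or to a point $w$ in the interior of a nontrivial $\Gamma_j$ (where only a chip firing exploiting the $e_1$--$e_2$ symmetry can generate the required representative), and using the non-necklace hypothesis to rule out alternative bypassing equivalences, yields the reverse inequality $L_2\leq 2L_1$. Combined with the previous step this gives $2L_1=L_2$; at this critical ratio, the rational function with integer slopes $-2$ on $e_1$ and $-1$ on $e_2$ has divisor $3v-3u$, so $3u\sim 3v$ and $D\sim 3x_0$ at either endpoint.

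The main obstacle is the first step, specifically showing that simultaneous failure of the consolidation procedure in both directions forces a $D$-hyperelliptic half inside $\Gamma$: this requires a careful Dhar's burning argument localizing the hyperelliptic structure, with the non-necklace hypothesis ensuring that the burning cannot loop back through a chain of cycles of separating vertices. The remaining steps reduce to direct piecewise-linear rational-function computations of the type already used in Proposition~\ref{prop:2cut}.
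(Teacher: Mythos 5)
There is a genuine gap, and it sits exactly where you flag ``the main obstacle'': the claim that simultaneous failure of the consolidation procedure forces a $D$-hyperelliptic half is never proved, and it is essentially equivalent to the inequality $2l_0(e_1)\leq l_0(e_2)$ you are trying to establish (a short computation shows consolidation fails in both directions only when $l_0(e_2)<2l_0(e_1)$), so the entire content of the lemma is deferred to an unproven assertion. The paper supplies precisely the missing argument: assuming $2d(x,p_1)>d(y,p_2)$, it produces two representatives $D\sim 2p+p_2$ and $D\sim p_1+p'+p_2$ with $p,p'\in\mathring{e_1}$, then uses the hypothesis that $\Gamma_2$ is not contained in a $D$-hyperelliptic half to find a point $w\in\Gamma_2$ for which $p_1+p_2-w$ has a $w$-reduced non-effective representative; Dhar's algorithm shows $p_1+p_2+p'-w$ is then also $w$-reduced and non-effective, contradicting $\operatorname{rk}(D)\geq 1$. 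The choice of test point \emph{inside} $\Gamma_2$, dictated by the no-hyperelliptic-half hypothesis, is the key idea and is absent from your proposal.

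Your second step does not repair this. First, you cannot ``arrange $y=A_2$ by a further slide'': once both chips sit at the vertex $A_1$, the third point of the class is uniquely determined (two distinct points of a $2$-edge connected graph of positive genus are never linearly equivalent), so $D\sim 2A_1+A_2$ is false in general. Second, $B_1$ is the wrong test point: the rational function that is $0$ on $\Gamma_1\cup[A_2,y]$ and has slope $-1$ on all of $e_1$ and on the segment of $e_2$ of length $L_1$ starting at $y$ gives $D\sim A_1+B_1+(y+L_1)$ whenever $d(y,B_2)\geq L_1$, so $D-B_1$ is equivalent to an effective divisor even in configurations with $2l_0(e_1)>l_0(e_2)$ (e.g.\ $y=A_2$, $L_1=1$, $L_2=3/2$). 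Hence ``ruling out alternative target divisors for $D-B_1$'' cannot succeed, and the inequality does not follow from the rank condition at $B_1$. For the parallel case the paper is also cleaner: it invokes Lemma \ref{lm:supp_sep_vert} at the separating vertex $p_1=p_2$ to force $D\sim 3p_1$, rather than your ad hoc choice of test points justified only by an appeal to the non-necklace hypothesis.
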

\begin{proof}
Without loss of generality we can assume $D\sim 2x+y$ with $x,y$ in distinct edges of the $2$-edge cut, such that $x\in\mathring{e}_1$ and $y\in\mathring{e}_2.$ 

Denote by $\Gamma_1, \Gamma_2$ the two connected components of $\Gamma\setminus\{e_1,e_2\}.$ Let  $p_1,p_2\in V(G_0)$ denote the endpoints of $e_1,e_2$ in $\Gamma_2$. 

Consider first the case where $p_1\neq p_2$ and assume that $2d(x,p_1)> d(y,p_2).$ 
In particular we have that $D\sim 2p+p_2,$ and $D\sim p_1+p'+p_2,$ for some $p,p'\in\mathring{e_1},$ as in Figure \ref{fg:2_end}.

\begin{figure}[ht]
\centering
\begin{tikzcd}
    \begin{tikzpicture}
\vertex{1,0}
\draw (0,0) node[anchor=south] {$2x$};
\draw (1,0) node[anchor=west] {$p_1$};
\draw (0.5,0) node[anchor=north] {$e_1$};
\draw[](0,0)to [out=20, in=160] (1,0);
    
\draw[] (1,0) to [out=100, in=170] (2,0.2);
\draw[] (2,0.2) to [out=350, in=120] (3,-0.5);
\draw[] (2.5,-1) to [out=170, in=280] (1,0);
\draw[] (2.5,-1) to [out=350, in=300] (3,-0.5);
\draw [] (-0.25,-1)to[out=340, in=200] (2.5,-1);
\draw (-0.25,-1) node[anchor=north] {$y$};
\draw (2.5,-1) node[anchor=north] {$p_2$};
\draw (1.125,-1.25) node[anchor=south] {$e_2$};
\divisor{-0.25,-1}\vertex{2.5,-1}\divisor{0,0}
    \end{tikzpicture}\qquad\qquad \begin{tikzpicture}
\vertex{0,0}\vertex{1,0}
\draw (0,0) node[anchor=south] {$2x$};
\draw (0.5,0) node[anchor=north] {$e_1$};
\draw[](0,0)to [out=20, in=160] (1,0);
    
\draw[] (1,0) to [out=100, in=170] (2,0.2);
\draw[] (2,0.2) to [out=350, in=120] (3,-0.5);
\draw[] (2.5,-1) to [out=170, in=280] (1,0);
\draw[] (2.5,-1) to [out=350, in=300] (3,-0.5);
\draw [] (-0.25,-1)to[out=340, in=200] (2.5,-1);
\draw (-0.25,-1) node[anchor=north] {$y$};
\draw (2.5,-1) node[anchor=north] {$p_2$};
\draw (1.125,-1.25) node[anchor=south] {$e_2$};
\draw (0.8,0.05) node[anchor=south] {$2p$};
\divisor{0.8,0.05}
\vertex{-0.25,-1}\divisor{2.5,-1}
\end{tikzpicture}\qquad\qquad\begin{tikzpicture}
\vertex{0,0}\vertex{1,0}
\draw (0,0) node[anchor=south] {$2x$};
\draw (0.5,0) node[anchor=north] {$e_1$};
\draw[](0,0)to [out=20, in=160] (1,0);
    
\draw[] (1,0) to [out=100, in=170] (2,0.2);
\draw[] (2,0.2) to [out=350, in=120] (3,-0.5);
\draw[] (2.5,-1) to [out=170, in=280] (1,0);
\draw[] (2.5,-1) to [out=350, in=300] (3,-0.5);
\draw [] (-0.25,-1)to[out=340, in=200] (2.5,-1);
\draw (-0.25,-1) node[anchor=north] {$y$};
\draw (2.5,-1) node[anchor=north] {$p_2$};
\draw (1.125,-1.25) node[anchor=south] {$e_2$};
\draw (1,0) node[anchor=west] {$p_1$};
\draw (0.6,0.07) node[anchor=south] {$p'$};
\divisor{0.6,0.07}\divisor{1,0}
\vertex{-0.25,-1}\divisor{2.5,-1}
\end{tikzpicture}
\end{tikzcd} \caption{Linear equivalent divisors if $p_1\neq p_2$ and $2d(x,p_1)> d(y,p_2).$}\label{fg:2_end}
\end{figure}
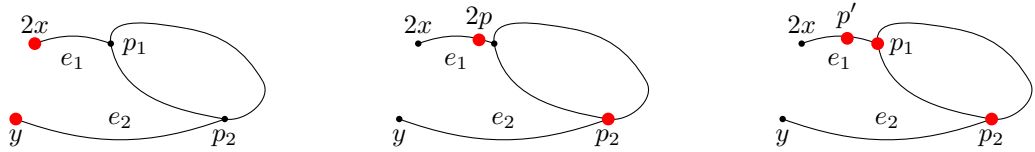

Since $\Gamma_2$ is not a $D$-hyperelliptic half, then there exists $w\in \Gamma_2$ and $q_1+q_2\sim p_1+p_2$ such that $q_1+q_2-w$ is $w$-reduced.
Then $q_1+q_2+p'\sim p_1+p_2+p'$ and $p_1+p_2+p'-w$ is also $w$-reduced (this is clear by Dhar's burning algorithm). This is not possible, therefore $2d(x,p_1)\leq d(y,p_2).$ 

Consider then the case $p_1=p_2,$ as in Figure \ref{fg:1_end}.
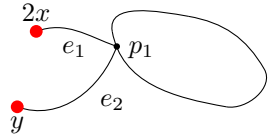
\begin{figure}[ht]
\centering
\begin{tikzcd}
    \begin{tikzpicture}
\vertex{6.06,-0.2}
\draw (5,0) node[anchor=south] {$2x$};
\draw (6.06,-0.2) node[anchor=west] {$p_1$};
\draw (5.5,0) node[anchor=north] {$e_1$};
\draw (6,-0.7) node[anchor=north] {$e_2$};
\draw[](5,0)to [out=20, in=160] (6.06,-0.2);

\draw[] (6,0) to [out=100, in=170] (7,0.2);
\draw[] (7,0.2) to [out=350, in=120] (8,-0.5);
\draw[] (7.5,-1) to [out=170, in=280] (6,0);
\draw[] (7.5,-1) to [out=350, in=300] (8,-0.5);
\draw [] (4.75,-1)to[out=340, in=250] (6.06,-0.2);
\draw (4.75,-1) node[anchor=north] {$y$};
\divisor{4.75,-1}\divisor{5,0}
    \end{tikzpicture}
\end{tikzcd} \caption{Case $p_1=p_2.$}\label{fg:1_end}
\end{figure}

The vertex $p_1$ is a separating vertex in $\Gamma$. Then by Lemma \ref{lm:supp_sep_vert} $D\sim 3p_1$ which is possible if and only if $2d(x,p_1)=d(y,p_1).$

Repeating the same argument for the component $\Gamma_1,$ then yields that $2l(e_1)\leq l(e_2)$ and equality holds if the pairs or endpoints coincide. With this inequalities between the lengths of the edges then one has that $D\sim 2x_0+y$ with $x_0\in V(G_0)$ and $D\sim 3x_0$ if $e_1,e_2$ are parallel edges. 
\end{proof}

\begin{lem}\label{lm:max_unique}Let $D_x^{\operatorname{max}}$ be an $x$-maximal admissible divisor for $D\in W_3^1(\Gamma)$, $x\in V(G_0)$.
    \begin{enumerate}
    \item If $D_x^{\operatorname{max}}(x)=1,$
    then there exist unique $x_1,x_2\neq x$ such that $D_x^{\operatorname{max}}=x+x_1+x_2.$ 
    \item If $D_x^{\operatorname{max}}(x)=2,$ then there exists a unique $x_1\neq x$ such that $D_x^{\operatorname{max}}=2x+x_1.$
    \end{enumerate}
\end{lem}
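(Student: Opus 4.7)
My plan for Lemma \ref{lm:max_unique} is to handle the two parts separately. Part (2) reduces to the injectivity of the Abel--Jacobi map on points of a positive-genus metric graph, while Part (1) proceeds by contradiction, showing that a second distinct maximal admissible representative forces the existence of a $D$-hyperelliptic half, contradicting the standing assumption of Section \ref{sc:non_hyp}.

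For Part (2), suppose $D \sim 2x+x_1 \sim 2x+x_1'$ with $x_1, x_1' \ne x$. Cancelling gives $x_1 \sim_\Gamma x_1'$. Since $\Gamma$ has positive first Betti number, a rational function with divisor $x_1 - x_1'$ would need to have a single simple zero and single simple pole while satisfying the harmonicity condition around every cycle; a straightforward extremal-value analysis (essentially, the slopes around any cycle cannot consistently compensate a lone unit pole/zero pair) forces $x_1 = x_1'$.

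For Part (1), assume two distinct maximal admissible representatives with coefficient $1$ at $x$, say $x + x_1 + x_2$ and $x + x_1' + x_2'$ with $\{x_1,x_2\} \neq \{x_1',x_2'\}$. Cancelling $x$, we obtain that $H := x_1+x_2$ admits the distinct representative $x_1'+x_2'$. Define
\[
\Gamma_H := \{\operatorname{Supp}(H') : H' \geq 0,\; H' \sim_\Gamma H\}.
\]
I claim $\Gamma_H$ is a $D$-hyperelliptic half, contradicting the standing hypothesis of Section \ref{sc:non_hyp}. Condition~(2) of Definition \ref{def:D_hyp} holds by construction. Since $D_x^{\operatorname{max}}(x)=1$, there is no equivalence $H \sim x+y$; hence $x \notin \Gamma_H$ and $p := x \in \overline{\Gamma \setminus \Gamma_H}$ satisfies $D-p \sim H$. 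Condition~(1), namely $H \sim_\Gamma \xi+\eta$ with $\xi,\eta$ in interiors of distinct edges of $G_H$, is provided by the admissibility of the two given representatives together with Proposition \ref{prop:k_cut} (after replacing one of them if necessary to arrange both points to lie in edge interiors).

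The main obstacle, and the crux of the argument, is verifying that $H \in W_2^1(\Gamma_H)$, i.e., that $H$ has rank at least one \emph{on the subcurve} $\Gamma_H$. For any $q \in \Gamma_H$ one must produce $q' \in \Gamma_H$ with $H \sim_{\Gamma_H} q+q'$ realized within $\Gamma_H$. The definition of $\Gamma_H$ supplies $q' \in \Gamma_H$ with $H \sim_\Gamma q+q'$; the hard part is showing this linear equivalence can be realized by a rational function whose non-constant locus lies in $\Gamma_H$. The idea is to use the extremal-set characterization of piecewise linear functions: if a function $f$ with $\operatorname{div}(f) = q+q' - H$ were non-constant on $\Gamma \setminus \Gamma_H$, its maximal and minimal level sets would meet $\Gamma \setminus \Gamma_H$, yielding a further effective representative of $H$ with support outside $\Gamma_H$ and contradicting the maximality built into the definition of $\Gamma_H$. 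Once this is handled, connectedness of $\Gamma_H$ and $|V(G_H)| \geq 2$ follow from the same extremal argument applied to the chain of linear equivalences linking the distinct representatives.
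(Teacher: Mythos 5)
Your Part (2) is essentially the paper's argument: cancelling gives $x_1\sim_\Gamma x_1'$, which cannot hold for $x_1\neq x_1'$. One caveat: positive first Betti number alone is not the right hypothesis (on a graph with a bridge, two distinct points of the bridge \emph{are} linearly equivalent); what your extremal-value analysis actually uses is the standing $2$-edge connectivity, since the maximum set of a function with a single simple pole has out-degree one and would therefore be cut off by a bridge.

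Part (1) has a genuine gap. You reduce to $H:=x_1+x_2\sim_\Gamma x_1'+x_2'$ and propose to show that $\Gamma_H=\{\operatorname{Supp}(H'): H'\ge 0,\ H'\sim_\Gamma H\}$ is a $D$-hyperelliptic half, contradicting the standing assumption of the section. The step you correctly flag as the crux, namely $H\in W_2^1(\Gamma_H)$, is not delivered by your sketch, and it is far from automatic: a degree-$2$ class with two distinct effective representatives need not have rank $1$, even on the subcurve swept out by its representatives. Your extremal-set argument does not close this. For $f$ with $\operatorname{div}(f)=q+q'-H$, the boundary of the maximum set consists of poles and the boundary of the minimum set of zeros, so both boundaries already lie in $\operatorname{Supp}(H)\cup\{q,q'\}\subset\Gamma_H$; neither ``the extrema meet $\Gamma\setminus\Gamma_H$'' nor ``a new representative supported outside $\Gamma_H$'' follows, and $f$ can be non-constant on a component of $\Gamma\setminus\Gamma_H$ with two boundary points without its global extrema being attained there (the slopes entering such a component at a boundary point can be cancelled by slopes on the $\Gamma_H$ side, so the boundary points need not appear in $\operatorname{div}(f)$). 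The same vagueness affects connectedness of $\Gamma_H$, the requirement $|V(G_H)|\ge 2$, and condition (1) of Definition \ref{def:D_hyp}: admissibility only prevents $x_1,x_2$ from lying in the interior of the \emph{same} edge, it does not place them in the interiors of two distinct edges. The paper instead routes through the edge-cut structure already established: by Proposition \ref{prop:k_cut} the support of $D_x^{\operatorname{max}}$ lies on a $k$-edge cut with $k=2,3$; if $k=2$, Lemma \ref{lm:length2} forces $D_x^{\operatorname{max}}=x+2y$ with $y\in V(G_0)$, and $2y\sim 2y'$ is excluded by \cite[Lemma 3.1 (i)]{MC}; if $k=3$, the equivalence $x_1+x_2\sim x_1'+x_2'$ would produce two disjoint monotone paths forming a $2$-edge cut, again excluded. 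If you want to keep your strategy, you would essentially have to reprove Proposition \ref{prop:2cut} in this configuration, which is exactly the content currently compressed into the unproven claim.
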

\begin{proof}
\begin{enumerate}
    \item Consider first the case $D_x^{\operatorname{max}}(x)=1,$ let $D_x^{\operatorname{max}}=x+x_1+x_2.$
    By Proposition \ref{prop:k_cut}, $x,x_1,x_2$ are supported on a $k$-edge cut, $k=2,3$. 

    If $k=2$ then by the first part of Lemma \ref{lm:length2} we would have that if $D_x^{\operatorname{max}}(x)=1$, 
    $D_x^{\operatorname{max}}=x+2y$ with $y\in V(G_0).$ If $D_x^{\operatorname{max}}=x+2y'$ with $y'\neq y$ then by \cite[Lemma 3.1. (i)]{MC} the linear equivalence $2y\sim 2y'$ would imply that there are two disjoint paths from $y$ to $y',$ not containing $x,$ forming a $2$-cut which is not possible. 
    
    Similarly, if $k=3$ and we assume by contradiction that $D_x^{\operatorname{max}}\sim x+x_1'+x_2',$ there would be linear equivalence $x_1+x_2\sim x_1'+x_2'.$ Since $\Gamma$ is $2$-edge connected then \cite[Lemma 3.1. (i)]{MC} would imply that there are two disjoint paths from $x_1,x_2$ to $x_1',x_2'$ over which the rational function defining the linear equivalence is increasing. This would only be possible if the removal of such edges would disconnect the graph giving a $2$-cut which is not possible. 

    \item For $D_x^{\operatorname{max}}(x)=2,$ let $D_x^{\operatorname{max}}=2x+y$ and by contradiction assume $D_x^{\operatorname{max}}\sim 2x+y'$ for some $y'\neq y.$ This yields $y\sim y'$ which is not possible since $\Gamma$ is bridgeless.
    \end{enumerate}
\end{proof}

Now that we have proved that maximal admissible representatives have disjoint supports, similarly to what has been done in the $3$-edge connected case, \cite[Section 3.1]{MZ25}.
Given $D\in W_3^1(\Gamma),$ we first define a refinement $(G_D,l_D)$ of the canonical model $(G_0,l_0)$ of $\Gamma,$ obtained by adding a vertex at any $y\in \operatorname{Supp}(D^{\operatorname{max}}_x),$ for any $x\in V(G_0)$ and a graph $T_D$ with a vertex $t_x$ for any maximal admissible divisor $D_x^{\operatorname{max}}\sim D;$ $x\in V(G_0).$

We construct our morphism first as a well-defined map of vertices $\varphi_D:V(G_D)\to V(T_D).$ For any maximal admissible representative $D_x^{\operatorname{max}}=x+x_1+x_2,$ set $\varphi_D(x)=\varphi_D(x_1)=\varphi_D(x_2)=t_x.$

Then, in order to extend the morphism on $E(G_D)$, we need to consider also here consecutive maximal admissible representatives, i.e, two maximal representatives whose supports define the endpoints of edges in such a refinement, and define the morphims along the edges.

\begin{prop}\label{prp:lengths}
Let $\Gamma$ be a metric graph with $D\in W_3^1(\Gamma)$ and no $D$-hyperelliptic halves.
Let $D_x^{\operatorname{max}}$ be a maximal admissible representative for $D$. Then there is a distinct admissible representative $D_y^{\operatorname{max}}$ consecutive to it.
Furthermore, the edges with endpoints contained in two consecutive maximal admissible representative form a $k$-edge cut with $k=2, 3.$ 
\end{prop}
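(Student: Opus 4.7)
The plan is to extend the sliding argument from the $3$-edge connected case in \cite{MZ25}, now starting from the $k$-edge cut $\mathcal{E}$ of Proposition \ref{prop:k_cut} that supports $D_x^{\operatorname{max}}$, and flowing $D_x^{\operatorname{max}}$ along $\mathcal{E}$ via linear equivalences until a new vertex of the canonical model appears in the support.

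First, I would construct the sliding family. Writing $D_x^{\operatorname{max}} = x + x_1 + x_2$ (or $2x + x_1$) with support on the cut $\mathcal{E} \subseteq \{e_x, e_1, e_2\}$ provided by Proposition \ref{prop:k_cut}, denote by $\Gamma^+$ the component of $\Gamma \setminus \mathcal{E}$ not containing the $x$-side. Consider the rational function $f_t$ equal to $0$ on the $x$-side of $\mathcal{E}$ and to $t$ on $\Gamma^+$, interpolated affinely along each edge of $\mathcal{E}$ with slope at a support point $p_i \in e_i$ equal to the coefficient $D_x^{\operatorname{max}}(p_i)$. Then $D_t := D_x^{\operatorname{max}} + \operatorname{div}(f_t)$ is effective and linearly equivalent to $D_x^{\operatorname{max}}$ for all sufficiently small $t > 0$, and corresponds to sliding each support point along its edge towards $\Gamma^+$.

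Second, I would define the consecutive representative as follows. Let $T > 0$ be the smallest value of $t$ at which some support point of $D_t$ coincides with a vertex of $(G_0, l_0)$; such $T$ exists and is positive by the finiteness of $\mathcal{E}$ and the interior placement of $x_i$ (in the corner case where $x_i \in V(G_0)$ already, use Lemma \ref{lm:length2} to control the relative edge-lengths). Let $y \in V(G_0)$ be the newly reached vertex. By construction, the remaining support points of $D_T$ lie in distinct edges, so $D_T$ is admissible with respect to $y$. Taking $D_y^{\operatorname{max}}$ to be the (unique, by Lemma \ref{lm:max_unique}) maximal admissible representative at $y$, we get $D_y^{\operatorname{max}}(y) \geq D_T(y) \geq 1$, and $D_y^{\operatorname{max}} \ne D_x^{\operatorname{max}}$ because their base vertices differ.

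Third, the cut property follows immediately: the edges whose endpoints support $D_x^{\operatorname{max}}$ and $D_y^{\operatorname{max}}$ respectively are exactly the edges of $\mathcal{E}$ traversed during the sliding, which by Proposition \ref{prop:k_cut} form a $k$-edge cut with $k \in \{2,3\}$.

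The main obstacle is the case $k = 2$, where we must exclude the possibility of running into a $D$-hyperelliptic half or into a degenerate sliding that fails to produce an admissible divisor. Here Proposition \ref{prop:2cut} is crucial, since it forces the entire support of $D$ to lie in $\overline{\mathcal{E}}$, and Lemma \ref{lm:length2} further constrains the edge lengths ($2l_0(e_1) \leq l_0(e_2)$, with equality only for parallel edges). Combining these ensures that the sliding terminates cleanly at a vertex of the canonical model, yielding an admissible divisor of the form $2y + y'$ with $y'$ outside the edge containing $y$, and concludes the proof.
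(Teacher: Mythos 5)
Your strategy coincides with the paper's: take the $k$-edge cut $\mathcal E$ from Proposition \ref{prop:k_cut}, slide the support of $D_x^{\operatorname{max}}$ along $\mathcal E$ by a rational function (your continuous family $f_t$ is just a reparametrization of the paper's single step of length equal to the shortest cut-edge), stop at the first vertex $y$ of the canonical model, and observe that the traversed segments are the cut; the treatment of $k=2$ via Proposition \ref{prop:2cut} and Lemma \ref{lm:length2} also matches.

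There is, however, a genuine gap between your second and third steps. You only establish $D_y^{\operatorname{max}}(y)\ge D_T(y)$, yet in step three you tacitly assume $D_y^{\operatorname{max}}=D_T$: you identify the support of $D_y^{\operatorname{max}}$ with the slid points in order to claim that the edges joining the two supports are ``exactly the edges of $\mathcal E$ traversed''. If the inequality were strict, $D_y^{\operatorname{max}}$ would have a different support, $D_x^{\operatorname{max}}$ and $D_y^{\operatorname{max}}$ need not even be consecutive in the sense of the refinement $(G_D,l_D)$, and the cut claim would not follow. You therefore must prove that the slid divisor $D_T$ is itself maximal at $y$. The paper does exactly this: for $k=3$ it notes that $D_T(y)=\alpha$ equals the number of cut-edges incident to $y$, and that a representative $D'$ with $D'(y)>\alpha$ would produce a linear equivalence $y_1+y_2\sim y+y_3$ or $y_1\sim y$ with $y_i\neq y$, which is excluded by \cite[Lemma 3.1 (i)]{MC} since it would force an extra $2$-edge cut or a bridge; for $k=2$, Lemma \ref{lm:length2} exhibits the consecutive divisor directly as $2y+u''$ or $3y$, which is visibly maximal. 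Adding this verification closes your argument. Two cosmetic points: your $f_t$ should take the value $-t$ (not $t$) on $\Gamma^+$ for $D_t=D_x^{\operatorname{max}}+\operatorname{div}(f_t)$ to move the support towards $\Gamma^+$; and ``$D_y^{\operatorname{max}}\neq D_x^{\operatorname{max}}$ because their base vertices differ'' is not in itself a proof, since one divisor can be admissible with respect to several vertices --- the correct reason is that for $T>0$ every support point has moved a positive distance.
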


\begin{proof}
    Let $D_x^{\operatorname{max}}$ be a maximal admissible representative for some divisor $D$. Then by Proposition \ref{prop:k_cut} there is a $k$-edge cut containing the points in its support.
    
    If $k=3$ then we can consider the rational function with slope $-1$ from each of the points (with multiplicities) in the support of $D_x^{\operatorname{max}}$ along paths of length equal to that of the shortest edge of the cut, and let $y\in V(G_0)$ denote its endpoint. This identifies the consecutive admissible divisor $D_y$.
    Assume that $D_y(y)=\alpha$ with $\alpha\leq 3$ the number of edges in the $3$-edge cut, in $G_0$ incident to $y$. If by contradiction there is an admissible divisor $D'\sim D_y$ with $D'(y)=\beta>\alpha$ , then we would have linear equivalences $y_1+y_2\sim y+y_3,$ or $y_1\sim y$ for $y_i\neq y$. This is not possible again by \cite[Lemma 3.1. (i)]{MC}.
        
    If instead $k=2,$ then from Lemma \ref{lm:length2} the consecutive admissible divisor would then be $2y+u'',$ for some $u''$ in $e_2,$ or $D_y\sim 3y$ if the two endpoints coincide,  which are maximal.
\end{proof}

\begin{rk}\label{rk:bridges}
Observe that if $\Gamma$ contains a bridge then the graph $\Gamma',$ obtained by contracting the bridge,  contains a separating vertex $v.$

Let us assume that $\Gamma$ has bridges with $D\in W_3^1(\Gamma)$ and no $D$-hyperelliptic halves. Denote by $D'$ its image via the contraction of bridges. By \cite[Corollaries 5.10,5.11]{BN} the divisor $D'$ has still rank $1$, thus we can consider $D_v\sim D'$.
By being a separating vertex, $v$ is incident to $m$ distinct $k_i$-edge cuts with $k_i\in \{2,3\}$ by Proposition \ref{prop:k_cut}.
If for any $i$, $k_i=3$, again by Dhar's burning algorithm, the graph burns all unless $v_1=v_2=v.$
    Instead, if any of the edge cuts consists of $2$ edges, then the second part of the proof of Proposition \ref{prp:lengths} proves that again $D'\sim 3v.$

    This shows that, if $\Gamma$ has a bridge $b$, then $D\sim 3p,$ for any $p\in b.$ Therefore we can set $D\sim D_u^{\operatorname{max}}=3u$ and $D\sim D_w^{\operatorname{max}}=3w,$ with $u,w$ the two endpoints of $b.$
\end{rk}

\begin{rk}\label{rk:lengths_eq}
Let us consider $(G_D,l_D)$ the refinement of $(G_0,l_0)$ obtained by inserting a vertex in the support of each maximal admissible divisor. Then the edges between two consecutive maximal divisor form a $k$-edge cut for $k=2,3.$ In particular if $k=3$ they have same length $l$ in $G,$ while if $k=2,$ and the $k$-edge cut is formed by two edges $e_1,e_2$ such that $2l_0(e_1')\leq l_0(e_2')$, where $e_i\subset e_i'\in E(G_0),$ as proved in Proposition \ref{prp:lengths}, then $2l(e_1)=l(e_2).$ 
\end{rk}

We now extend the map $\varphi_D$ on $E(G_D).$
For any pair of vertices $t_x,t_y\in V(T_D)$ we set $t_xt_y\in E(T_D)$ if and only if $D_x^{\operatorname{max}},D_y^{\operatorname{max}}$ are consecutive and we extend $\varphi_D$ to $V(G_D)\cup E(G_D)\to V(T_D)\cup E(T_D)$ such that for any $x'y'\in E(G_D)$
$$
\varphi_D(x'y')=
\begin{cases}t_xt_y\in E(T_D) &\text{if }x'\in \operatorname{Supp}(D_x^{\operatorname{max}}), y'\in \operatorname{Supp}(D_y^{\operatorname{max}})\\
t_x\in V(T_D) &\text{if }x',y'\in \operatorname{Supp}(D_x^{\operatorname{max}}).
\end{cases}$$

For any pair of consecutive maximal admissible divisors, let us consider the $k$-edge cut that they define. If $k=3,$ we assign to each edge $e$ in the $3$-edge cut the index $\mu_{\varphi_D}(e)=1$ and set $l_{T_D}(\varphi_D(e))=l_D(e).$
If $k=2,$ then by Remark \ref{rk:lengths_eq} the two edges of the cut are such that the longest $e_2$ is twice the shortest $e_1$. We set index $1$ on the longest and $2$ on the shortest and $l_{T_D}(\varphi_D(e))=l_D(e_2)=2l_D(e_1)$.
If instead $k=1,$ we assign to the bridge $b$ index $3$ and $l_{T_D}(\varphi_D(b))=3l_D(b).$ 
This yields an indexed morphism which induces a morphism on metric graphs $\varphi_D:(G_D,l_D)\to (T_D,l_{T_D}).$ 

\begin{prop}\label{prp:harmonic}
Let $\Gamma$ be a $2$-edge connected metric graph which is not a necklace, with $D\in W_3^1(\Gamma)$ and no $D$-hyperelliptic halves.
The morphism $\varphi_D:(G_D,l_D)\to(T_D,l_{T_D})$ constructed above is non-degenerate and harmonic of degree $3$ to a metric tree.
\end{prop}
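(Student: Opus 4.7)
The plan is to verify the properties of $\varphi_D$ one by one, relying on the structural results established in this section. Well-definedness of $\varphi_D$ on vertices follows from Lemma \ref{lm:max_unique}, which guarantees that the maximal admissible representative $D_x^{\operatorname{max}}$ is unique for each $x$, so the assignment $\varphi_D(y)=t_x$ for every $y \in \operatorname{Supp}(D_x^{\operatorname{max}})$ is unambiguous. Well-definedness on edges, together with the compatibility of the edge-length assignment, is immediate from Proposition \ref{prp:lengths} and Remark \ref{rk:lengths_eq}. Non-degeneracy is automatic because every edge of $G_D$ is assigned a positive index ($1$, $2$, or $3$) in the construction, so no edge is contracted.

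For the degree, I would fix an edge $e' = t_x t_y \in E(T_D)$. Its preimage is precisely the $k$-edge cut between $\operatorname{Supp}(D_x^{\operatorname{max}})$ and $\operatorname{Supp}(D_y^{\operatorname{max}})$ produced by Proposition \ref{prop:k_cut}. When $k = 3$, the preimage consists of three edges of index $1$; when $k = 2$, by Lemma \ref{lm:length2} it consists of one edge of index $1$ and one edge of index $2$. In both cases the indices sum to $3$, so $\operatorname{deg}(\varphi_D)=3$.

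Harmonicity reduces to the following local statement: for every $x \in V(G_D)$, the quantity $\sum_{e \in E_x(G_D),\, \varphi_D(e)=e'} \mu_{\varphi_D}(e)$ equals $D_x^{\operatorname{max}}(x)$ independently of the direction $e' \in E_{t_x}(T_D)$. I would verify this by case analysis on $D_x^{\operatorname{max}}(x) \in \{1,2,3\}$: when $D_x^{\operatorname{max}}(x)=1$, exactly one edge of index $1$ emanates from $x$ in each direction; when $D_x^{\operatorname{max}}(x)=2$, either two edges of index $1$ or a single edge of index $2$ emanate from $x$ in each direction, the latter occurring precisely when $x$ lies at the short side of a $2$-edge cut as in Lemma \ref{lm:length2}; the case $D_x^{\operatorname{max}}(x)=3$ is analogous and corresponds to three parallel edges at $x$ or a single edge of index $3$ (which can only occur after contracting bridges, as discussed in Remark \ref{rk:bridges}).

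The main obstacle is showing that $T_D$ is a tree. Connectedness of $T_D$ follows from connectedness of $\Gamma$ together with the fact that every vertex of $V(G_0)$ lies in the support of at least one maximal admissible representative. To rule out cycles, I would argue by contradiction: a cycle $t_{x_0}, t_{x_1}, \dots, t_{x_n} = t_{x_0}$ in $T_D$ corresponds to a cyclic sequence of pairwise consecutive maximal admissible divisors joined across disjoint $k$-edge cuts in $\Gamma$. Removing all these cut edges produces a cyclic chain of subgraphs glued along the cuts. Using Lemma \ref{lm:supp_sep_vert} together with Propositions \ref{prop:2cut} and \ref{prp:lengths}, and the standing assumption that $\Gamma$ contains no $D$-hyperelliptic halves, I would show that each gluing between consecutive subgraphs must reduce to a single separating vertex, so the cycle in $T_D$ lifts to a cycle of at least three separating vertices in $\Gamma$, forcing $\Gamma$ to be a necklace in the sense of Definition \ref{def:neck}. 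This contradicts our hypothesis, so $T_D$ is a tree and the construction yields the desired non-degenerate harmonic morphism of degree $3$.
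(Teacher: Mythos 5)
Most of your proposal tracks the paper's proof: non-degeneracy from the positivity of the assigned indices (via Proposition \ref{prp:lengths}), degree $3$ from the index sums over each $k$-edge cut, and harmonicity from the observation that the local sum $\sum_{e\in E_x(G_D),\,\varphi_D(e)=e'}\mu_{\varphi_D}(e)$ depends only on the coefficient $D_{x'}^{\operatorname{max}}(x)$ of $x$ in the unique maximal admissible divisor containing it. Those parts are fine and essentially identical to the paper's argument.

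The gap is in the step you yourself identify as the main obstacle: showing $T_D$ has no cycles. Your proposed mechanism — that a cycle in $T_D$ would force each gluing between consecutive pieces of $G_D$ to ``reduce to a single separating vertex'' and hence exhibit $\Gamma$ as a necklace — does not work as stated. Consecutive maximal admissible divisors are joined by $k$-edge cuts with $k=2$ or $3$; such a junction is a pair or triple of edges, not a separating vertex, and there is no reason a cyclic chain of subgraphs glued along multi-edge cuts should collapse to a cycle of cut vertices as in Definition \ref{def:neck}. Moreover you only sketch this step (``I would show\dots''), so even if some version were salvageable it is not carried out. The correct and much shorter argument, which is the one the paper uses, bypasses necklaces entirely: for every edge $e'=t_xt_y$ of $T_D$, the preimage $\varphi_D^{-1}(e')$ is a $k$-edge cut of $G_D$ whose removal separates $\operatorname{Supp}(D_x^{\operatorname{max}})$ from $\operatorname{Supp}(D_y^{\operatorname{max}})$; since distinct maximal admissible divisors have disjoint supports (Lemma \ref{lm:max_unique}) and each vertex fibre of $\varphi_D$ is one such support, removing $e'$ disconnects $T_D$. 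Thus every edge of $T_D$ is a bridge, and a connected graph all of whose edges are bridges is a tree. You should replace your contradiction argument with this direct one.
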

\begin{proof}
Let us show first that the graph $T_D$ is a tree. By construction,
$T_D=\varphi_D(G_D)$ and it is connected since $G_D$ is: any edge $e\in E(T_D)$ is such that the edges in $\varphi_D^{-1}$ form a $k$-edge cut, for some $k\leq 3$ and
therefore the removal of $e$ must disconnect $T_D$.
The morphism is non-degenerate by Proposition \ref{prp:lengths} and clearly has degree $3$, if harmonic.

Let $x\in V(G_D)$ in the pre-image of a vertex $t$ of the tree $T_D,$ to prove that the morphism is harmonic, we have to show that the quantity
$$m_{\varphi_D}(x)=\sum_{\substack{e\in E_x(G_D)\\ \varphi_D(e)=e'}}\mu_{\varphi_D}(e)$$ is constant for any $e'\in E_t(T_D).$

Similarly to the $3$-edge connected case, because of how we defined the indices of the morphism, such quantity only depends on the coefficients of the points on the support of $D^{\operatorname{max}}_{x'},$ the unique maximal admissible divisor containing $x,$ thus harmonicity follows.
\end{proof}

Combining this with the construction over $D$-hyperelliptic halves, completes the proof of Theorem \ref{th:main_general}.

\begin{proof}[Proof of Thereom \ref{th:main_general} $B.\Rightarrow A$]
If $\Gamma$ is hyperelliptic, then Proposition \ref{prp:hyp_trig} applies.

Let $D\in W_3^1(\Gamma)$. If instead $\Gamma$ is not hyperelliptic, then we can always write $$\Gamma=\Gamma_0\sqcup \left(\bigcup_{i=1}^m\Gamma_i\right)$$ where $\Gamma_i$ are the $D$-hyperelliptic halves, for $i>0,$ where $|\Gamma_i\cap\Gamma_j|\leq1$ for $i,j>0$; $i\neq j$ by Lemma \ref{lm:inters}. 

If $\Gamma_0$ is non-empty, then we apply the construction made in this section for divisorially trigonal graphs with no $D$-hyperelliptic halves. This gives a non-degenerate harmonic morphism $\varphi_D^0$ of degree $3$ from a refinement $(G_D^0,l_D^0)$ of the canonical model of $\Gamma_0$ to a metric tree $(T_D^0,l_D^0),$ as proved in Proposition \ref{prp:harmonic}.

Now, over each $\Gamma_i$, for each $i>0$, we know that $D\sim H_i+p_i$ with $H_i\in W_2^1(\Gamma_i)$, so we can apply the construction from Section \ref{sc:hyperelliptic} to obtain a non-degenerate harmonic morphism of degree $3$ $\varphi_D^i:\Gamma_i\cup T_i\to T_i$, where $T_i$ is the target tree of the degree $2$ non-degenerate harmonic morphism associated to $H_i.$
In particular, the $i$-th tree is glued at $p_i$ and from Proposition \ref{prp:morph_hyp} we know that such morphisms glue over the non-empty intersection of distinct $D$-hyperelliptic halves to yield a harmonic non-degenerate degree $3$ morphism $\varphi_D^{hyp}$from the union of the $\Gamma_i$'s to a tree $T^{hyp}$. 

We denote by $\varphi_D:(G_D,l_D)\to (T_D,l_{T_D})$ the morphism obtained by  the gluing $\varphi_D^0$ and $\varphi_D^{hyp}$ at their intersection points, as in Figure \ref{fg:ex_morph}. 

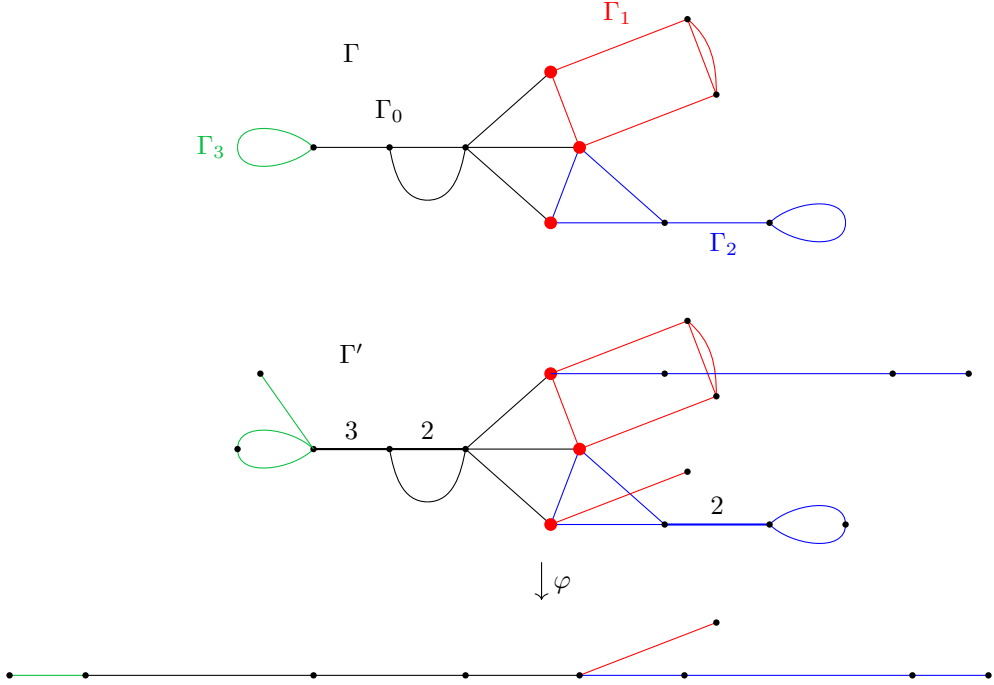
\begin{figure}[ht]
\centering
\begin{tikzcd}
\begin{tikzpicture}
\draw (0.5,5) node[anchor=south] {$\Gamma$};
\draw (-1,4)[verde] to [out=90, in=135] (0,4);
\draw (-1,4)[verde] to [out=270, in=225] (0,4);
\draw (-1,4)[verde] node[anchor=east] {$\Gamma_3$};
\draw (0,4) -- (1,4);
\draw (1,4) -- (2,4);
\draw (1,4) to [out=280, in=180] (1.5,3.3);
\draw (1.5,3.3) to [out=0, in=260] (2,4);
\draw (2,4) -- (3.12,5);
\draw (2,4) -- (3.12,3);
\draw (2,4) -- (3.5,4);
\draw[red] (3.12,5)--(3.5,4);
\draw [blue](3.12,3)--(3.5,4);
\vertex{0,4}\vertex{1,4}\vertex{2,4}
\draw[red] (3.5,4)--(5.3,4.7);
\draw[red] (3.12,5)--(4.92,5.7);
\draw[red] (5.3,4.7)--(4.92,5.7);
\draw[red] (5.3,4.7) to [out=90, in=320] (4.92,5.7);
\draw[blue](3.5,4)--(4.62,3);
\draw[blue](3.12,3)--(4.62,3);
\draw[blue](4.62,3)--(6,3);
\draw[blue] (6,3) to [out=45, in=90] (7,3);
\draw[blue] (6,3) to [out=315, in=270] (7,3);
\vertex{5.3,4.7}\vertex{4.92,5.7}\vertex{6,3}\vertex{4.62,3}
\divisor{3.5,4}\divisor{3.12,5}\divisor{3.12,3}
\draw (1,4.2) node[anchor=south] {$\Gamma_0$};
\draw[red] (4,5.5) node[anchor=south] {$\Gamma_1$};
\draw[blue] (5.4,3) node[anchor=north] {$\Gamma_2$};

\draw (0.5,1) node[anchor=south] {$\Gamma'$};
\draw (-1,0)[verde] to [out=90, in=135] (0,0);
\draw (-1,0)[verde] to [out=270, in=225] (0,0);
\draw[thick] (0,0) -- (1,0);
\draw[thick] (1,0) -- (2,0);
\draw (0.5,0) node[anchor=south] {$3$};
\draw(1.5,0) node[anchor=south] {$2$};
\draw (1,0) to [out=280, in=180] (1.5,-0.7);
\draw (1.5,-0.7) to [out=0, in=260] (2,0);
\draw (2,0) -- (3.12,1);
\draw (2,0) -- (3.12,-1);
\draw (2,0) -- (3.5,0);
\draw[red] (3.12,1)--(3.5,0);
\draw [blue](3.12,-1)--(3.5,0);
\vertex{0,0}\vertex{1,0}\vertex{2,0}
\draw[red] (3.5,0)--(5.3,0.7);
\draw[red] (3.12,1)--(4.92,1.7);
\draw[red] (5.3,0.7)--(4.92,1.7);
\draw[red] (5.3,0.7) to [out=90, in=320] (4.92,1.7);
\draw[blue](3.5,0)--(4.62,-1);
\draw[blue](3.12,-1)--(4.62,-1);
\draw[blue,thick](4.62,-1)--(6,-1);
\draw[blue] (6,-1) to [out=45, in=90] (7,-1);
\draw[blue] (6,-1) to [out=315, in=270] (7,-1);
\vertex{5.3,0.7}\vertex{4.92,1.7}\vertex{6,-1}\vertex{4.62,-1}
\divisor{3.5,0}\divisor{3.12,1}\divisor{3.12,-1}
\vertex{-1,0}
\draw[verde](0,0)--(-0.7,1);
\vertex{-0.7,1}
\draw[red] (3.12,-1)--(4.92,-0.3);
\vertex{4.92,-0.3}
\vertex{7,-1}
\draw[blue] (3.12,1)--(8.62,1);
\vertex{4.62,1}\vertex{7.62,1}\vertex{8.62,1}
\draw[->] (3,-1.5) to (3,-2);
\draw (3,-1.75) node[anchor=west] {$\varphi$};
\draw[verde](-4,-3)--(-3,-3);
\draw(-3,-3)--(3.5,-3);
\draw[red] (3.5,-3)--(5.3,-2.3);
\draw[blue] (3.5,-3)--(8.88,-3);
\vertex{-3,-3}\vertex{0,-3}\vertex{3.5,-3}\vertex{2,-3}\vertex{-4,-3}\vertex{5.3,-2.3}\vertex{4.88,-3} \vertex{7.88,-3}\vertex{8.88,-3}
\draw (5.31,-1) node[anchor=south] {$2$};
\end{tikzpicture}
\end{tikzcd}
\caption{A metric graph $\Gamma$ with $D\in W_3^1(\Gamma),$ and a non-degenerate harmonic morphism of degree $3$ from a tropical modificaton $\Gamma'$ of $\Gamma$ to a tree.}\label{fg:ex_morph}
\end{figure}

The graph $T_D$ is a tree since it has been obtained by gluing the trees and no cycle can be defined after gluing  by property (H1).
Again, no contraction has been added; therefore, $\varphi_D$ is non-degenerate. 

What is left to check is that the resulting morphism is still harmonic over the points over which a $D$-hyperelliptic half and its complement glue. This is again a consequence of the fact that the quantity $m_{\varphi_D}(x)$ only depends on the coefficients of the maximal admissible divisors, but this is also true over the $D$-hyperelliptic half and for $x=p_i$ because of the addition of the tree in the construction.
\end{proof}

\subsection{Non-hyperelliptic necklaces} \label{ssc:trig_necklace}

Let us now finally consider a metric graph $\Gamma$ which is a non-hyperelliptic necklace. We show that if $\Gamma$ is divisorially trigonal then there exists a non-degenerate harmonic morphism of degree $3$ from a tropical modification of $\Gamma$ to a tree of triangles, thus concluding the proof of Theorem \ref{th:main_neck}.

Let us recall from Lemma \ref{lm:supp_neck} that given $D\in W_3^1(\Gamma)$ we may always assume that its support is entirely contained in a cycle $\gamma$ of the necklace and that $D\sim 2x+x'$ for any $x\in\gamma.$

If we moreover assume that $\Gamma$ doesn't contain $D$-hyperelliptic halves, then we can give a further description of our divisor: by Lemma \ref{lm:supp_sep_vert}, if $\Gamma_i$ is the component of $\Gamma$ with the cycle removed, glued on the vertex $x_i$, then $D\sim 3x_i.$

We now show that if this is the case the number of component in necessarily $3,$ and the necklace is a triangle, i.e. the edges in the disconnecting cycle are precisely 3.

\begin{lem}\label{lm:3comp}
    Let $\Gamma$ be a divisorially trigonal non-hyperelliptic necklace.
    Then for any cycle $\gamma$ of separating vertices, $\Gamma\setminus \gamma$ is given by exactly $3$ components $\Gamma_1,\Gamma_2,\Gamma_3$ which are also divisorially trigonal and $d(x_1,x_2)=d(x_2,x_3)=d(x_1,x_3)$ where $x_i=\gamma\cap\Gamma_i.$ 
\end{lem}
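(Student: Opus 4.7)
My plan is to split the proof into three steps matching the three conclusions. I fix $D\in W_3^1(\Gamma)$; by Lemma \ref{lm:supp_neck} it takes the form $D\sim 2x_i + x_i'$ for each separating vertex $x_i = \gamma\cap\Gamma_i$, with $x_i'\in\gamma$. First I would show that $D\sim 3x_i$ for every $i$. Arguing by contradiction, suppose $x_i'\neq x_i$. Then Lemma \ref{lm:supp_sep_vert} yields a $D$-hyperelliptic half containing the connected component of $\Gamma\setminus\{x_i\}$ opposite to $x_i'$, namely the closure of $\Gamma_i\setminus\{x_i\}$. Reading into the proof of that lemma, one extracts that for every $w\in\Gamma_i$ there exists $u\in\Gamma_i$ with $2x_i - w\sim u$ in $\Gamma$; since $x_i$ is a cut vertex, replacing the realising rational function by the constant $f(x_i)$ outside $\Gamma_i$ descends the equivalence to $\Gamma_i$, so $2x_i\in W_2^1(\Gamma_i)$. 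This makes $\Gamma_i$ hyperelliptic, contradicting the definition of non-hyperelliptic necklace. Hence $D\sim 3x_i$.

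Next I would deduce the count and equidistance. For each $i\neq j$ pick a rational function $f_{ij}$ on $\Gamma$ with $\operatorname{div}(f_{ij}) = 3x_j - 3x_i$. For every $k$, the restriction $f_{ij}|_{\Gamma_k}$ has $\operatorname{div}$ of degree zero and support only at $x_k$, which forces the sum of slopes of $f_{ij}$ into $\Gamma_k$ at $x_k$ to vanish. Therefore $f_{ij}|_\gamma$ is a well-defined rational function on the cycle with divisor $3x_j - 3x_i$. Parameterising $\gamma\cong \mathbb{R}/L\mathbb{Z}$ by arc length, the equivalence $3x_j\sim 3x_i$ on $\gamma$ translates into $x_j - x_i\in(L/3)\mathbb{Z}$. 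Thus the separating vertices lie in a three-element coset of $\gamma$; since the necklace definition forces $n\geq 3$, we conclude $n = 3$ and $d(x_1, x_2) = d(x_2, x_3) = d(x_1, x_3) = L/3$.

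Finally I would prove each $\Gamma_i$ is divisorially trigonal by showing $3x_i \in W_3^1(\Gamma_i)$. Given $w\in\Gamma_i$, the case $w=x_i$ is immediate; otherwise I pick $E'\geq 0$ on $\Gamma$ with $3x_i - w\sim E'$ and apply the constant-outside trick to each $\Gamma_j$ with $j\neq i$, obtaining an equivalent effective $E''$ of degree $2$ supported in $\Gamma_i\cup\{x_1,x_2,x_3\}$. A short case analysis, combined with the identity $x_1 + x_2 + x_3\sim 3x_i$ from the previous step, reduces to excluding the ``mixed'' configurations: $E'' = x_j + y$ with $y\in\Gamma_i$ leads to $y+w\sim x_i + x_k$, which a slope computation as in Step 2 shows is impossible (it would force $x_j\sim x_k$ on $\gamma$, whereas they sit at distance $L/3$); $E'' = 2x_j$ analogously forces $w\sim x_j$, impossible on a $2$-edge-connected graph with $w\neq x_j$; and $E'' = x_j + x_k$ forces $w\sim x_i$, hence $w = x_i$, contrary to assumption. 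Thus $E''\subset\Gamma_i$, and the equivalence descends to $\Gamma_i$ as in the first step, giving $3x_i - w\sim E''$ on $\Gamma_i$.

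The hardest part of this plan is the mixed-configuration analysis in Step 3: although each case is killed by a short computation, organising the exclusion cleanly requires combining the cycle arithmetic of Step 2 with slope balance on $\gamma$ and $2$-edge-connectivity (assumed after contracting bridges, as they do not affect divisorial trigonality). Step 1 is also somewhat subtle, as one must read the stronger statement ``$2x_i\in W_2^1(\Gamma_i)$'' out of the proof of Lemma \ref{lm:supp_sep_vert}, rather than only its stated conclusion.
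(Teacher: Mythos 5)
Your proposal follows essentially the same route as the paper: $D\sim 3x_i$ is extracted from Lemmas \ref{lm:supp_neck} and \ref{lm:supp_sep_vert} (reading off $2x_i\in W_2^1(\Gamma_i)$ from the latter's proof and contradicting non-hyperellipticity of the bead), the equivalences $3x_i\sim 3x_j$ are pushed down to the cycle to force three equally spaced vertices, and the trigonality of each $\Gamma_i$ is obtained by descending equivalences across the cut vertex $x_i$ --- a step the paper dismisses as ``clear'' and which you (commendably) spell out. The one imprecision is in your Step 3: applying the constant-outside trick to each bead $\Gamma_j$, $j\neq i$, does not touch points of $E'$ lying in the \emph{interiors of the edges of} $\gamma$, so the claim that $E''$ is supported in $\Gamma_i\cup\{x_1,x_2,x_3\}$ does not yet follow; this is repaired by collapsing all of $\overline{\Gamma\setminus\Gamma_i}$ to $x_i$ in one step (its intersection with $E'$ is linearly equivalent on that subgraph to a multiple of $x_i$), which also makes most of your case analysis unnecessary.
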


\begin{proof}
Let $D\in W_3^1(\Gamma).$ By Lemmas \ref{lm:supp_neck}, \ref{lm:supp_sep_vert}, we have $D\sim 3x_i$ for any $x_i$ separating vertex of $\gamma.$ Clearly $3x_i\in W_3^1(\Gamma_i),$ where $\Gamma_i$ denotes the sub-curve not containing $\gamma$ glued at $x_i.$

In particular, by definition of necklace, we have at least $3$ vertices and the linear equivalence $3x_1\sim 3x_2$ shows that the cycle $\gamma$ is given by the union of two paths $P_1,P_2$ such that $l(P_i)=2l(P_j).$ 

Repeating this for the pair $x_2,x_3$ then yields that all edges whose endpoints are both separating vertices have all the same length and they cannot be more than $3.$
\end{proof}

The above lemma then proves that any divisorially trigonal non-hyperelliptic necklace must be as in Figure \ref{fg:Luo2}. 

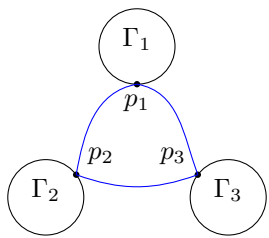
\begin{figure}[ht]\begin{tikzcd}
\begin{tikzpicture}
    \draw (0.8,0.2) circle (0.5);
    \draw (0.8,0.2) node {$\Gamma_1$};
    \draw (-0.4,-1.8) circle (0.5);
    \draw (-0.4,-1.8) node {$\Gamma_2$};
    \draw (2,-1.8) circle (0.5);
    \draw (2,-1.8) node {$\Gamma_3$}; \vertex{0,-1.5}\vertex{0.8,-0.3}\vertex{1.6,-1.5}
    \draw[blue] (0,-1.5) to [out=340, in=200] (1.6,-1.5);
    \draw[blue] (0,-1.5) to [out=80, in=190] (0.8,-0.3);
    \draw[blue] (1.6,-1.5) to [out=110, in=350] (0.8,-0.3);
    \draw (0,-1.5) node[anchor=south west] {$p_2$};
    \draw (0.8,-0.3) node[anchor=north] {$p_1$};
    \draw (1.6,-1.5) node[anchor=south east] {$p_3$};
    \end{tikzpicture}    
\end{tikzcd}\caption{A metric graph, with edges of the same color with the same length and $\Gamma_i$ divisorially trigonal.}\label{fg:Luo2}
\end{figure}

In particular the argument in \cite[Example 5.13]{ABBR} for the non-existence of a non-degenerate harmonic morphism of degree $3$ to a tree (up to tropical modification) still holds. However we now show that a morphism with the same properties to a tree of triangles can be constructed, thus proving Theorem \ref{th:main_neck}.

\begin{proof}[Proof of Theorem \ref{th:main_neck}, $B.\Rightarrow A$]
    Let $\Gamma_1\subset \Gamma$ be a component of a necklace, which is not itself a necklace. Then by Lemma \ref{lm:3comp} it is divisorially trigonal and by Theorem \ref{th:main_general} there exists a non-degenerate harmonic morphism of degree $3$ from $\Gamma_1'$ to a tree $T_1$, with $\Gamma_1'$ a tropical modification of $\Gamma_1.$ 
    
    In particular, $\Gamma_1$ is one of three components glued to a cycle, whose edges have the same lengths by Lemma \ref{lm:3comp}. 
    We can then extend the morphism by sending the cycle to a cycle over with the same number of vertices, with edge lengths multiplied by $3$, and gluing it to a leaf of the tree. 
    The morphism is again non-degenerate since no contraction has been added and it is still harmonic: $D\sim 3x_1,$ where $x_1$ is the gluing point. 

    Repeating iteratively this contruction for all non-necklaces components within any necklace then yields a non-degenarate harmonic morphism of degree $3$ and the target space is a tree of triangles.
\end{proof}

Finally, let us provide a proof of Corollary \ref{cor:main_g6}.

\begin{proof}[Proof of Corollary \ref{cor:main_g6}]

It is sufficient to prove that divisorially trigonal graphs for which a non-degenerate harmonic morphism of degree $3$ to a tree doesn't exist (even if tropical modification are allowed) have genus higher or equal than $6$.

Consider a graph as the one in Figure \ref{fg:Luo2}. If for instance $\Gamma_1$ is a loop, while $\Gamma_2,\Gamma_3$ are divisorially trigonal graph with a morphism of degree $3$ (from a tropical modification) to trees $T_2,T_3$, then we can still construct a morphism from the whole graph which is non-degenerate, harmonic and of degree $3$. Indeed, as in Figure \ref{fg:Luo_loop}, we can extend the two morphisms to the two trees by adding a vertex $v$ at the midpoint of the edge $p_2p_3$ and then send the edges $p_1p_2$, $p_2v$ to the same edge with multiplicities $1$ and $2,$ respectively. Similarly, send the edged $p_1p3$, $p_3v$ to the same edge with multiplicities $1$ and $2,$ respectively, and contract the loop.

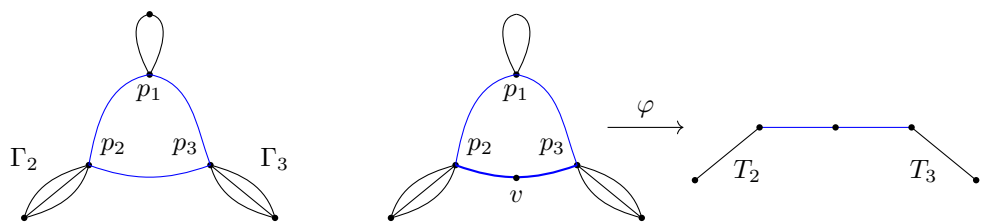
\begin{figure}[ht!]\begin{tikzcd}
\begin{tikzpicture}
    \draw(0.8,0.5)[] to [out=190, in=120] (0.8,-0.3);
    \draw(0.8,0.5)[] to [out=350, in=60] (0.8,-0.3);
    \vertex{0.8,0.5}
    \draw(-0.85,-2.2)[] to [out=75, in=195] (0,-1.5);
    \draw(-0.85,-2.2)[] to [out=15, in=255] (0,-1.5);
    \draw[](0,-1.5)--(-0.85,-2.2);\vertex{-0.85,-2.2}
    \draw(1.6,-1.5)[] to [out=285, in=165] (2.45,-2.2);
    \draw(1.6,-1.5)[] to [out=345, in=105] (2.45,-2.2);
    \draw[](1.6,-1.5)--(2.45,-2.2);\vertex{2.45,-2.2}
\vertex{0,-1.5}\vertex{0.8,-0.3}\vertex{1.6,-1.5}
    \draw[blue] (0,-1.5) to [out=340, in=200] (1.6,-1.5);
    \draw[blue] (0,-1.5) to [out=80, in=190] (0.8,-0.3);
    \draw[blue] (1.6,-1.5) to [out=110, in=350] (0.8,-0.3);
    \draw (0,-1.5) node[anchor=south west] {$p_2$};
    \draw (0.8,-0.3) node[anchor=north] {$p_1$};
    \draw (1.6,-1.5) node[anchor=south east] {$p_3$};
    \draw (-0.5,-1.7) node[anchor=south east] {$\Gamma_2$};
    \draw (2.1,-1.7) node[anchor=south west] {$\Gamma_3$};
    \end{tikzpicture} &\begin{tikzpicture}
    \draw(0.8,0.5)[] to [out=190, in=120] (0.8,-0.3);
    \draw(0.8,0.5)[] to [out=350, in=60] (0.8,-0.3);
    \draw(-0.85,-2.2)[] to [out=75, in=195] (0,-1.5);
    \draw(-0.85,-2.2)[] to [out=15, in=255] (0,-1.5);
    \draw[](0,-1.5)--(-0.85,-2.2);\vertex{-0.85,-2.2}
    \draw(1.6,-1.5)[] to [out=285, in=165] (2.45,-2.2);
    \draw(1.6,-1.5)[] to [out=345, in=105] (2.45,-2.2);
    \draw[](1.6,-1.5)--(2.45,-2.2);\vertex{2.45,-2.2}
\vertex{0,-1.5}\vertex{0.8,-0.3}\vertex{1.6,-1.5}
    \draw[thick,blue] (0,-1.5) to [out=340, in=200] (1.6,-1.5);
    \draw[blue] (0,-1.5) to [out=80, in=190] (0.8,-0.3);
    \draw[blue] (1.6,-1.5) to [out=110, in=350] (0.8,-0.3);
    \draw (0,-1.5) node[anchor=south west] {$p_2$};
    \draw (0.8,-0.3) node[anchor=north] {$p_1$};
    \draw (0.8,-1.7) node[anchor=north] {$v$};
    \vertex{0.8,-1.67}
    \draw (1.6,-1.5) node[anchor=south east] {$p_3$};
    \draw[->](2,-1)--(3,-1);
    \draw (2.5,-1) node[anchor=south] {$\varphi$};
    \draw[blue](4,-1)--(6,-1);
    \vertex{5,-1}
    \draw[](4,-1)--(3.15,-1.7);
    \draw[](6,-1)--(6.85,-1.7);
    \draw (3.5,-1.3) node[anchor=north west] {$T_2$};
    \draw (6.5,-1.3) node[anchor=north east] {$T_3$};
    \vertex{6,-1}\vertex{4,-1}\vertex{3.15,-1.7}\vertex{6.85,-1.7}
    \end{tikzpicture}   
\end{tikzcd}\caption{A metric graph, with a non-degenerate harmonic morphism of degree $3$ from a tropical modification to a metric tree.}\label{fg:Luo_loop}
\end{figure}

 A similar construction can also be done if the loop is glued via a bridge, see Figure \ref{fg:Luo_loop2}.
\begin{figure}[ht]\begin{tikzcd}
\begin{tikzpicture}
    \draw(0.8,1)[] to [out=190, in=120] (0.8,0.2);
    \draw(0.8,1)[] to [out=350, in=60] (0.8,0.2);
    \vertex{0.8,1}
    \vertex{0.8,0.2}
    \draw[](0.8,0.2)--(0.8,-0.3);
    \draw(-0.85,-2.2)[] to [out=75, in=195] (0,-1.5);
    \draw(-0.85,-2.2)[] to [out=15, in=255] (0,-1.5);
    \draw[](0,-1.5)--(-0.85,-2.2);\vertex{-0.85,-2.2}
    \draw(1.6,-1.5)[] to [out=285, in=165] (2.45,-2.2);
    \draw(1.6,-1.5)[] to [out=345, in=105] (2.45,-2.2);
    \draw[](1.6,-1.5)--(2.45,-2.2);\vertex{2.45,-2.2}    \vertex{0,-1.5}\vertex{0.8,-0.3}\vertex{1.6,-1.5}
    \draw[blue] (0,-1.5) to [out=340, in=200] (1.6,-1.5);
    \draw[blue] (0,-1.5) to [out=80, in=190] (0.8,-0.3);
    \draw[blue] (1.6,-1.5) to [out=110, in=350] (0.8,-0.3);
    \draw (0,-1.5) node[anchor=south west] {$p_2$};
    \draw (0.8,-0.3) node[anchor=north] {$p_1$};
    \draw (1.6,-1.5) node[anchor=south east] {$p_3$};
    \draw (-0.5,-1.7) node[anchor=south east] {$\Gamma_2$};
    \draw (2.1,-1.7) node[anchor=south west] {$\Gamma_3$};
    \end{tikzpicture} &\begin{tikzpicture}
        \draw(0.8,1)[] to [out=190, in=120] (0.8,0.2);
    \draw(0.8,1)[] to [out=350, in=60] (0.8,0.2);
    \vertex{0.8,0.2}
    \draw[red](0.8,0.2)--(0.8,-0.3);
    \draw(-0.85,-2.2)[] to [out=75, in=195] (0,-1.5);
    \draw(-0.85,-2.2)[] to [out=15, in=255] (0,-1.5);
    \draw[](0,-1.5)--(-0.85,-2.2);\vertex{-0.85,-2.2}
    \draw(1.6,-1.5)[] to [out=285, in=165] (2.45,-2.2);
    \draw(1.6,-1.5)[] to [out=345, in=105] (2.45,-2.2);
    \draw[](1.6,-1.5)--(2.45,-2.2);\vertex{2.45,-2.2}
\vertex{0,-1.5}\vertex{0.8,-0.3}\vertex{1.6,-1.5}
    \draw[thick,blue] (0,-1.5) to [out=340, in=200] (1.6,-1.5);
    \draw[blue] (0,-1.5) to [out=80, in=190] (0.8,-0.3);
    \draw[blue] (1.6,-1.5) to [out=110, in=350] (0.8,-0.3);
    \draw (0,-1.5) node[anchor=south west] {$p_2$};
    \draw (0.8,-0.3) node[anchor=north] {$p_1$};
    \draw (0.8,-1.7) node[anchor=north west] {$v$};
    \draw[red,thick] (0.8,-1.7) --(0.8,-1.95);
    \vertex{0.8,-1.67}
    \vertex{0.8,-1.95};
    \draw (1.6,-1.5) node[anchor=south east] {$p_3$};
    \draw[->](2,-1)--(3,-1);
    \draw (2.5,-1) node[anchor=south] {$\varphi$};
    \draw[blue](4,-1)--(6,-1);
    \draw[red](5,-1)--(5,-0.5);
    \vertex{5,-1}   \vertex{5,-0.5}
    \draw[](4,-1)--(3.15,-1.7);
    \draw[](6,-1)--(6.85,-1.7);
    \draw (3.5,-1.3) node[anchor=north west] {$T_2$};
    \draw (6.5,-1.3) node[anchor=north east] {$T_3$};
    \vertex{6,-1}\vertex{4,-1}\vertex{3.15,-1.7}\vertex{6.85,-1.7}
    \end{tikzpicture}   
\end{tikzcd}\caption{A metric graph, with a non-degenerate harmonic morphism of degree $3$ from a tropical modification to a metric tree.}\label{fg:Luo_loop2}
\end{figure}
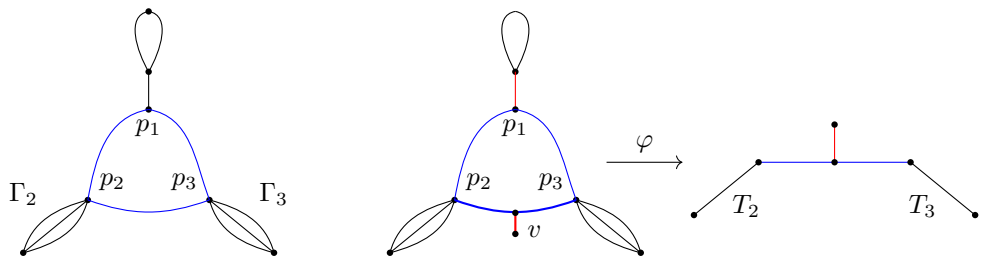

Therefore a divisorially trigonal metric graph $\Gamma$ of the form represented in Figure \ref{fg:Luo2}, with no non-degenerate harmonic morphisms of degree 3 from any of its tropical modifications to a tree, is such that each $\Gamma_i$ has genus greater than or equal to 2. Therefore if $\Gamma$ has genus greater than or equal to $7$, the above construction might be applied and a non-degenerate harmonic morphism of degree $3$ to a tree might not exist.

When we consider necklaces with hyperelliptic components, however, as shown in Remark \ref{rk:g6}, a non-degenerate harmonic morphism of degree $3$ to a tree might not exist even for genus $6$, whereas if $g\leq 5$, if $\Gamma$ is a divisorially trigonal necklace then at least two components $\Gamma_i,\Gamma_j$ are loops and the morphism must exist.
\end{proof} 

\bibliographystyle{alpha}
\bibliography{bib}
\end{document}